\documentclass{amsart}
\usepackage{amsmath}
\usepackage{amssymb}
\usepackage{graphicx}
\usepackage{tikz}
\usepackage{color}
\usepackage{enumerate}
\usepackage{hyperref}
\newtheorem{theorem}{Theorem}[section]

\newtheorem{conjecture}{Conjecture}
\newtheorem{remark}{Remark}
\newtheorem{question}{Question}
\newtheorem{lemma}[theorem]{Lemma}
\newtheorem{corollary}[theorem]{Corollary}
\newtheorem{proposition}[theorem]{Proposition}
\theoremstyle{definition}

\begin{document}

	\title[On Locally Generalized radical linear groups over rings ]{On Locally Generalized radical linear groups over rings} 
	
	\keywords{division ring;  linear groups; locally generalized radical group; locally solvable group; polycyclic-by-finite group, noetherian group; periodic group; ascendant subgroup; locally finite algebra; left artinian ring; $\mathrm{PI}$-algebra.\\
		\protect \indent 2020 {\it Mathematics Subject Classification:}  20E34; 20F19; 20F50; 20H25; 16K40.}
	
	\maketitle
	\begin{center}{LE VAN CHUA}\end{center}
	\begin{center}	
		\tiny{(1) An Giang University, Vietnam\\(2) Vietnam National University, Ho Chi Minh City, Vietnam\\
			e-mail: lvchua.tag@moet.edu.vn\\
			ORCID: 0000-0002-2443-4300}
	\end{center}
	
	\begin{center} {BUI XUAN HAI\footnote{Corresponding author}}
	\end{center}
	\begin{center}	
		\tiny{(1) Faculty of Mathematics and Computer Science, University of Science, Ho Chi Minh City, Vietnam\\(2) Vietnam National University, Ho Chi Minh City, Vietnam\\
			e-mail:  bxhai@hcmus.edu.vn\\
			ORCID: 0000-0002-4208-7883} 
	\end{center} 
		
\begin{abstract} In this paper, we investigate the structure of locally generalized radical linear groups over certain non-commutative rings and algebras. The base rings and algebras we consider include division rings, left artinian rings, locally finite algebras over fields, and PI-algebras over fields. Among various results, we get the positive answers to the General Burnside Problem and to Baer's Conjecture for some particular cases of linear groups. 
\end{abstract}
	
\tableofcontents
	
\section{Introduction and preliminaries}\label{Intro}

	The structure of locally generalized radical skew linear groups was investigated in our recent paper \cite{Pa_ChuaHai_2026}. Our aim in this paper is to study the structure of locally generalized radical linear groups over certain non-commutative rings and algebras including division rings. As it was remarked in  \cite{Pa_ChuaHai_2026}, the class of generalized radical groups is very large as it includes the class of nearly radical groups, the class of radical groups,
	the class of hyperabelian groups, the class of locally nilpotent groups, the class of solvable
	groups, the class of locally finite groups, and the class of solvable-by-finite groups. For definitions of these classes of groups and some basic properties of generalized radical groups we refer to \cite{Pa_ChuaHai_2026} and the references therein. 
	
Let $R$ be an associative ring with identity $1\ne 0$. The set of all invertible elements in $R$ constitutes the group under multiplication. 
We denote this group by $R^\times$, and call it the \textit{unit group} of $R$. If $R=D$ is a division ring, then by tradition, we use the symbol $D^*$ to indicate the unit group of $D$, and we often say about it as the \textit{multiplicative group} of $D$. If $n$ is a positive integer, then the unit group of the matrix ring $\mathrm{M}_n(R)$, denoted by $\mathrm{GL}_n(R)$, is called the \textit{general linear group} of degree $n$ over $R$. Every subgroup of $\mathrm{GL}_n(R)$ is a \textit{linear group} of degree $n$ over $R$. If $R=D$ is a division ring, then we say about $\mathrm{GL}_n(D)$ and its subgroups as the \textit{general skew linear group} and \textit{skew linear groups} of degree $n$ over $D$ respectively. 
	
This paper can be considered a continuation of our recent research in \cite{Pa_ChuaHai_2026}. Therefore, throughout it, most of the concepts and symbols we use are consistent with those we used in \cite{Pa_ChuaHai_2026}. Consequently, we refer to \cite{Pa_ChuaHai_2026} for the definitions and basic properties of the topics we study in the current paper, which is organized as follows. Apart from Section~ \ref{Intro}, where some notions and terminologies are recalled, and the main subjects are introduced, the paper contains also seven other sections. 
	
In Section \ref{generalized}, we continue our study of the structure of generalized radical skew linear groups that we began in \cite{Pa_ChuaHai_2026}. Let $D$ be a division ring, $n$ a positive integer, and $G$ an almost subnormal subgroup of the general skew linear group $\mathrm{GL}_n(D)$. In our recent work \cite{Pa_ChuaHai_2026}, it was conjectured \cite[Conjecture 1]{Pa_ChuaHai_2026} that if $G$ is locally generalized radical, then $G$ is central. The positive answer for this conjecture was obtained for $n\ge 2$ provided that $D$ is not a locally finite field (see \cite[Theorem 3.3]{Pa_ChuaHai_2026}) while in case $n=1$, the conjecture was solved affirmatively only for weakly locally finite division rings (see \cite[Theorem 4.11]{Pa_ChuaHai_2026}). Here, we shall give the positive answer to \cite[Conjecture 1]{Pa_ChuaHai_2026} in case $n=1$ for an arbitrary division ring $D$ (see Theorem~\ref{th:2.5}), and so we successfully complete the study of this conjecture. 
	
Section \ref{S3} is devoted to the study of  periodic radicals in skew linear groups and some problems related with the following conjecture posed by Herstein in 1978.
	\begin{conjecture}\label{conj:1}{\rm \cite[Conjecture 3]{Pa_Herstein_1978}}
		Let $D$ be a division ring with center $F$, and let $G$ be a subnormal subgroup of $D^*$. If $G$ is radical over $F$, then $G$ is central.
	\end{conjecture}
    Recall that an element $x\in D$ is \textit{radical} over $F$ if there exists some positive integer $m(x)$ depending on $x$ such that $x^{m(x)}\in F$. A non-empty subset $S$ of $D$ is \textit{radical} over $F$ if every its element is radical over $F$. Note that Conjecture \ref{conj:1} remains unsolved in general although it is true in several particular cases. Note also, Herstein himself \cite[Theorem~8]{Pa_Herstein_1978} proved that if $G$ is a periodic subnormal subgroup of the multiplicative group $D^*$ then $G$ is central. In Theorem \ref{th:3.2}, we show that the same result as in \cite[Theorem~8]{Pa_Herstein_1978} holds if $G$ is a periodic almost subnormal (instead of subnormal) subgroup of the group $D^*$. Further, we use this theorem to study the periodic radicals of almost subnormal subgroups in multiplicative groups of division rings in order to  describe the structure of (locally generalized radical)-by-periodic almost subnormal subgroups in division rings (see Theorem \ref{th:3.13}). Also, using the result obtained in Theorem \ref{th:2.5} of the previous section on the structure of generalized radical almost subnormal subgroups in division rings, we can prove that every periodic-by-(locally generalized radical) almost subnormal subgroup of the multiplicative group of a division ring must be central (see Theorem \ref{th:3.14}). This result strongly extends Herstein's result in \cite[Theorem 8]{Pa_Herstein_1978} we have mentioned above. 
    
    In Section \ref{S4}, we continue to study the structure of periodic skew linear groups we have began in Section \ref{S3}. The difference is that we consider ascendant subgroups instead of almost subnormal subgroups in general skew linear groups. Namely, we pose the following conjecture, which is more general than Conjecture \ref{conj:1} to study.
    \begin{conjecture}\label{conj:2}
    	Let $D$ be a division ring with center $F$, and let $G$ be an ascendant subgroup of $D^*$. If $G$ is radical over $F$, then $G$ is central.
    \end{conjecture}
	
The first main result we get is Theorem \ref{th:4.1}, which says that if $G$ is a periodic ascendant subgroup of $D^*$ then $G$ is central, giving the affirmative answer to \cite[Question 1]{Pa_ChuaHai_2026}. Further, using this theorem we can prove that every periodic-by-(generalized radical) ascendant subgroup of $D^*$ must be central (see Theorem~\ref{th:4.7}). Further, we consider periodic ascendant subgroups of the general linear group $\mathrm{GL}_n(D)$ of degree $n\ge 1$. We prove that every periodic locally generalized radical ascendant subgroup of the group $\mathrm{GL}_n(D)$ is central provided that $D$ is not a locally finite field in case $n\ge 2$ (see Theorem \ref{th:4.9}). Also, we get the positive answer for Conjecture~\ref{conj:2} in case $D$ is a weakly locally finite division ring (see Theorem~\ref{th:4.4}) or the center $F$ of $D$ is a locally finite field (see Theorem~ \ref{th:4.5}). Besides the new obtained results above, there is an useful  result which gives some information to verify Conjecture~\ref{conj:2}. Namely, we prove  that if an ascendant subgroup $G$ of $D^*$ is radical over $F$, then every periodic element of $G$ must be contained in $F$ (see Theorem~\ref{th:4.3}). This theorem strongly extends a result of Herstein in~\cite[Theorem~ 9]{Pa_Herstein_1978}.

We devote three next sections to study the structure of locally generalized radical linear groups over certain non-commutative rings and algebras such as left artinian rings, locally finite algebras, and PI-algebras. In particular, we investigate the General Burnside Problem for these groups.  Recall that in 1902, William Burnside \cite{Pa_Burn_1902} posed the following question: \begin{center}
	\textit{Is an arbitrary finitely generated periodic group finite?}
\end{center} 
Later, this question was termed the \textit{General Burnside Problem} (GBP for short).	Clearly, this is equivalent to ask whether every periodic group is locally finite.  It is well-known that the negative answer to GBP was given in 1964 by E. S. Golod and I. R. Shafarevich  who constructed  \cite{Pa_Golod_1964, Pa_Golod-Shaf_1964} a family of infinite finitely generated $p$-groups for an arbitrary prime number $p$. Although in general, the problem was solved negatively, there exist various affirmative answers to it in particular cases. For instance, recall that the GBP for linear groups over fields was studied by W. Burnside and I. Schur. From Burnside's First Theorem (see e.g. \cite[(9.4)]{Bo_Lam_2001}), we see that a periodic linear group of finite exponent over a field of characteristic zero must be finite. Later, Schur \cite{Pa_Schur_1911} proved  that every finitely generated periodic linear group over a field is finite (also see~\cite[(9.9)]{Bo_Lam_2001}), answering positively to the GBP for linear groups over fields. Also, C. Procesi~\cite{Pa_Procesi_1966} and A. Tokarenko~\cite{Pa_Tokarenko_1968} obtained an affirmative answer to GBP for linear groups over $\mathrm{PI}$-algebras.

Let $R$ be a ring, $n$ a positive integer, and $\mathrm{GL}_n(R)$ the general linear group over $R$. In Section \ref{S5}, we study the structure of locally generalized radical subgroups of $\mathrm{GL}_n(R)$. The main result we get is Theorem \ref{th:5.7}, which gives in particular the affirmative answer to GBP for almost subnormal subgroups of the general linear group $\mathrm{GL}_n(R)$ over a left artinian ring $R.$

In Section \ref{S6}, we study linear groups over locally finite algebras. Namely, let $A$ be an $F$-algebra, where $F$ is a field. Recall that $A$ is \textit{locally finite} if every finite subset of elements of $A$ generates a finite-dimensional subalgebra. Considering subgroups of the general linear group $\mathrm{GL}_n(A)$, we show that every locally generalized radical subgroup of $\mathrm{GL}_n(A)$ over a locally finite algebra $A$ is locally (solvable-by-finite) (see Theorem~\ref{th:6.8}). As an application, we can prove that the GBP is solved positively for linear groups over locally finite algebras (see Theorem~\ref{th:6.11}). As a consequence, the GBP is solved positively for linear groups over weakly locally finite division rings (see Corollary~\ref{cor:6.12}).

In Section \ref{S7}, we study the structure of locally generalized radical linear groups over $\mathrm{PI}$-algebras. It is shown that if $G$ is a locally generalized radical linear group over a $\mathrm{PI}$-algebra $A$, then $G$ contains a locally nilpotent normal subgroup $N$ such that the quotient group $G/N$ is locally (solvable-by-finite). Moreover, if $G$ is finitely generated, then $G$ has a normal series: $1\trianglelefteq N\trianglelefteq H\trianglelefteq G$ such that $N$ is locally nilpotent, $H/N$ is solvable, and $G/H$ is finite (see Theorem~\ref{th:7.2}). Besides, we prove that if $A$ is an affine $\mathrm{PI}$-algebra over a field, then every locally generalized radical linear group over $A$ is locally (solvable-by-finite) (see Theorem~\ref{th:7.4}).

In the end, we devote the last section, Section \ref{S8} to study Baer's Conjecture.
Recall that a group $G$ is said to be \textit{noetherian} if it satisfies the maximal condition on subgroups, or, equivalently, if every its subgroup is finitely generated (see~\cite{Zassenhaus_1969}). It is easy to see that polycyclic-by-finite groups are noetherian. In~\cite{Pa_Baer_1956} Baer conjectured that every noetherian group is polycyclic-by-finite. It turned out that in general, this conjecture is not true. Indeed, according to \cite[Theorem 28.3]{Bo_Ol'shanskii_1991}, there is a simple torsion-free $2$-generated group $G$ in which every proper subgroup is infinite cyclic. Clearly, such a group $G$ is noetherian, and  being a simple group, $G$ is not polycyclic-by-finite. However, 
in \cite[Theorem 1]{Zassenhaus_1969}, Zassenhaus proved that every noetherian linear group over a field is polycyclic-by-finite. Inspired by this result, in this section we will look for the conditions under which Baer's Conjecture holds. Firstly, in Theorem \ref{th:8.3}, we show that every generalized radical noetherian group is polycyclic-by-finite. Further, we extend Zassenhaus' result in \cite[Theorem 1]{Zassenhaus_1969} by proving that every noetherian linear group over a locally finite algebra is polycyclic-by-finite (see Theorem~\ref{th:8.7}). Finally, we finish the section by giving the affirmative answer to Baer's Conjecture for linear groups over $\mathrm{PI}$-algebras (see Theorem~\ref{th:8.9}).

\section{Free subgroups of almost subnormal subgroups \\ of multiplicative groups of  division rings}\label{generalized}

Let $D$ be a division ring, $G$ a subgroup of the multiplicative group $D^*$. The main aim of this section is to prove that if $G$ is  locally generalized radical almost subnormal in $D^*$, then $G$ is contained in the center $F$ of $D$. Recall that in 1950, L. K. Hua~\cite{Pa_Hua_1950} proved that if the group $D^*$ is solvable, then $D$ is a field. 
Seven years later, W. R. Scott~\cite{Pa_Scott_1957}  extended Hua's result by proving that if $G$ is a solvable normal subgroup of $D^*$ then $G$ is central. Further, the result of Scott has been extended for $G$ is solvable subnormal in $D^*$  by Stuth \cite[Theorem 4]{Pa_Stuth_1964} (1964); for $G$ is locally solvable normal in $D^*$  by Zalesskii \cite[Theorem 2]{Pa_Zalesskii_1965} (1965);  for $G$ is locally solvable subnormal in $D^*$ by Danh and Khanh \cite[Theorem 1]{Pa_Danh-Khanh_2021} (2021); for $G$ is (locally solvable)-by-(locally finite) almost subnormal in $D^*$ by Chua, Nam, and Hai \cite[Theorem 2.6]{Pa_ChNaHa_24} (2024); and for $G$ is locally (solvable-by-finite) almost subnormal in $D^*$ by Hai and Chua \cite[Theorem 3.9]{Pa_HaiChua_2025} (2025). 
Finally, recently in \cite{Pa_ChuaHai_2026}, we have proved a result, which extends all results mentioned above. Namely, \cite[Theorem 3.5]{Pa_ChuaHai_2026} showed that every generalized radical almost subnormal subgroup of $D^*$ is central. It turns out that the result of \cite[Theorem 3.5]{Pa_ChuaHai_2026} still can be extended.
Namely, in this section, we shall prove that 
every locally generalized radical almost subnormal subgroup of $D^*$ is central (see Theorem \ref{th:2.5}). Note that Theorem \ref{th:2.5} gives the positive answer to \cite[Conjecture 1]{Pa_ChuaHai_2026} in the remaining case $n=1$, and so the study of this conjecture is now successfully complete.

To get Theorem \ref{th:2.5}, we shall prove a more general result on the existence of non-cyclic free subgroups in  locally 
generalized radical almost subnormal subgroup of $D^*$.
Observe that the question of whether the multiplicative group of any non-commutative division ring contains a non-cyclic free subgroup posed in 1977 by Lichtman \cite{Pa_Li_77} remains now still open in general. Let $D$ be a division ring, and $G$ a subgroup of the multiplicative group $D^*$. In \cite[Theorem 2]{Pa_Li_78}, Lichtman proved that if $G$ is normal in $D^*$  and $G$ contains a non-abelian nilpotent-by-finite subgroup, then $G$ contains a non-cyclic free subgroup. The same result was obtained by Danh and Deo \cite[Theorem~1.2]{Pa_Danh-Deo_2023} if $G$ is subnormal in $D^*$ and $G$ contains a non-abelian solvable subgroup; by Ngoc, Bien, and Hai \cite[Theorem 4.7]{Pa_nbh_17} if $G$ is an almost subnormal subgroup {\color{blue}containing non-abelian nilpotent-by-finite subgroup} in $D^*$ provided that $D$ is algebraic over its center $F$; by Hai and Chua \cite[Theorem 3.5]{Pa_HaiChua_2025} if $G$ is almost subnormal in $D^*$ and $G$ contains a non-abelian locally (solvable-by-finite) subgroup. Here, in Theorem~ \ref{th:2.3}, we prove a very general result, which extends all results mentioned above. Namely, we prove that if $G$ contains a non-abelian locally generalized radical subgroup, then $G$ contains a non-cyclic free subgroup. Clearly, Theorem~\ref{th:2.5} is a direct consequence of Theorem~ \ref{th:2.3}. 

\begin{lemma}\label{lem:2.1}
Let $G$ be a group. Then, the following statements hold:
\begin{itemize}
\item [(1)] If $G$ is abelian-by-(locally finite), then $G$ is locally (abelian-by-finite).
\item [(2)] If $G$ is abelian-by-(locally nilpotent), then $G$ is locally solvable.
\end{itemize}
\end{lemma}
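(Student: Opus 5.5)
Both parts reduce to the same observation: being "locally" a property only concerns finitely generated subgroups. So fix a finitely generated subgroup $H$ of $G$ and show that $H$ is abelian-by-finite in (1) and solvable in (2). By hypothesis there is an abelian normal subgroup $A\trianglelefteq G$ such that $G/A$ is locally finite (resp. locally nilpotent). Put $B=H\cap A$. Then $B$ is an abelian normal subgroup of $H$, and $H/B\cong HA/A$ is a subgroup of $G/A$.

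For (1), $HA/A$ is the image of $H$, so it is a finitely generated subgroup of the locally finite group $G/A$. Hence $HA/A$ is finite, and therefore $|H:B|<\infty$. Since $B$ is abelian, $H$ is abelian-by-finite. As $H$ was an arbitrary finitely generated subgroup, $G$ is locally (abelian-by-finite).

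For (2), the same argument shows that $HA/A$ is a finitely generated subgroup of the locally nilpotent group $G/A$, so it is nilpotent, say of class $c$. Nilpotent groups are solvable, so $H/B$ is solvable of some derived length $d\le c$ (indeed $d\le \lceil \log_2 (c+1)\rceil$, though any finite bound suffices). It follows that $H^{(d)}\le B$. Since $B$ is abelian, $H^{(d+1)}=1$, so $H$ is solvable of derived length at most $d+1$. Hence $G$ is locally solvable.

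No step is a real obstacle. The only point needing care is to use the correct local definitions: "locally $\mathcal{X}$" means every finitely generated subgroup lies in $\mathcal{X}$, and extensions of the form "abelian-by-$\mathcal{Y}$" are inherited by subgroups via the intersection $H\cap A$ and the isomorphism $H/(H\cap A)\cong HA/A$.
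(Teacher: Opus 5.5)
Your proof is correct and is essentially the paper's argument. Like the paper, you take a finitely generated subgroup $H$, intersect it with the abelian normal subgroup $A$, and use $H/(H\cap A)\cong HA/A$ to transfer local finiteness (resp.\ local nilpotence) of $G/A$ down to $H$; your explicit derived-length bookkeeping in (2) is accurate, though only the fact that an abelian-by-solvable group is solvable is needed.
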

\begin{proof}
(1) Suppose that $G$ is abelian-by-(locally finite). Then, there exists an abelian normal subgroup $A$ of $G$ such that $G/A$ is locally finite. Let $N$ be a finitely generated subgroup of $G$. Then, $NA/A$ is a finitely generated subgroup of $G/A$, and so it is finite. The isomorphism $$NA/A\cong N/N\cap A$$ shows that $N$ is abelian-by-finite. Hence, $G$ is locally (abelian-by-finite).
		
(2) Suppose that $G$ is abelian-by-(locally nilpotent). Then, there exists an abelian normal subgroup $A$ of $G$ such that $G/A$ is locally nilpotent. Let $N$ be a finitely generated subgroup of $G$. Then, $NA/A$ is a finitely generated subgroup of $G/A$, and so it is nilpotent. From the isomorphism $$NA/A\cong N/N\cap A,$$ it follows that $N$ is solvable. Hence, we conclude that $G$ is locally solvable.
\end{proof}
\begin{lemma}\label{lem:2.2} If $G$ is a non-abelian generalized radical group, then $G$ contains a non-abelian locally (solvable-by-finite) subgroup.
\end{lemma}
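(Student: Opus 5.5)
The plan is to use the definition of a generalized radical group (recalled from \cite{Pa_ChuaHai_2026}): $G$ has an ascending normal series
$$1=G_0\trianglelefteq G_1\trianglelefteq \cdots \trianglelefteq G_\alpha\trianglelefteq G_{\alpha+1}\trianglelefteq\cdots\trianglelefteq G_\gamma=G,$$
with $G_\lambda=\bigcup_{\alpha<\lambda}G_\alpha$ for limit ordinals $\lambda$, in which every factor $G_{\alpha+1}/G_\alpha$ is either locally nilpotent or locally finite. The idea is to locate the first non-abelian term of this series and check that it is the required subgroup.

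First, since $G=G_\gamma$ is non-abelian, the set of ordinals $\beta\le\gamma$ with $G_\beta$ non-abelian is non-empty. Let $\beta$ be the least such ordinal; then $\beta>0$ because $G_0=1$. Next, I claim $\beta$ is not a limit ordinal. Otherwise $G_\beta=\bigcup_{\alpha<\beta}G_\alpha$ would be the union of an ascending chain of abelian subgroups. Any two elements of this union lie in a common $G_\alpha$, so they commute, and $G_\beta$ would be abelian, a contradiction. Hence $\beta=\alpha+1$ for some $\alpha$, and $G_\alpha$ is abelian by minimality of $\beta$.

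Now $G_\alpha$ is an abelian normal subgroup of $G_{\alpha+1}$, and $G_{\alpha+1}/G_\alpha$ is locally finite or locally nilpotent. If it is locally finite, Lemma~\ref{lem:2.1}(1) shows that $G_{\alpha+1}$ is locally (abelian-by-finite). If it is locally nilpotent, Lemma~\ref{lem:2.1}(2) shows that $G_{\alpha+1}$ is locally solvable. In either case $G_{\alpha+1}$ is locally (solvable-by-finite), and it is non-abelian by the choice of $\beta$. So $G_{\alpha+1}$ is the required subgroup.

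There is no real obstacle here. The only point needing care is the limit-ordinal step, which uses the continuity of the ascending series at limit ordinals; this is part of the definition of an ascending series.
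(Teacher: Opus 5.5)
Your proof is correct and is essentially the paper's argument. The paper runs a transfinite induction on the length $\gamma$ of the series: it reduces to the case where every $G_\alpha$ with $\alpha<\gamma$ is abelian, rules out limit $\gamma$, and applies Lemma~\ref{lem:2.1} to the final factor. Your choice of the least ordinal $\beta$ with $G_\beta$ non-abelian does the same thing directly, and is slightly cleaner because it never needs to observe that each $G_\alpha$ is itself generalized radical.
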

\begin{proof} Let 
	$$1=G_0\le G_1\le\cdots G_\alpha\le G_{\alpha+1}\le\cdots G_\gamma=G$$
be an ascending normal series in $G$ of length $\gamma$  whose factors are locally finite or locally nilpotent. We shall prove the lemma by transfinite induction on $\gamma.$ If $\gamma=1,$ then $G$ is either locally finite or locally nilpotent. We see that in both cases, $G$ is locally (solvable-by-finite), and so the result follows. Now, let $\gamma >1$. Observe that $G_\alpha$ is generalized radical for all $\alpha$. Suppose by induction that the conclusion of the lemma holds for all $\alpha<\gamma$ such that $G_\alpha$ is non-abelian. We shall prove that the conclusion also holds for $G$. This is clear if there exists an $\alpha<\gamma$ such that $G_\alpha$ is non-abelian. Now, suppose that $G_\alpha$ is abelian for all $\alpha<\gamma$.
If $\gamma$ is a limit ordinal, then $G=\bigcup_{\alpha<\gamma}G_\alpha$. Then, for any two elements $x,y\in G$, there exist some $\alpha<\gamma$ such that $x,y\in G_\alpha$, and so $xy=yx$. Thus, $G$ is abelian, a contradiction. Hence, $\gamma$ is not a limit ordinal, and so the ordinal $\gamma-1$ exists. Then, $G_{\gamma-1}$ is an abelian normal subgroup of $G$, and $G/G_{\gamma-1}$ is either locally finite or locally nilpotent. In view of Lemma \ref{lem:2.1},  $G$ is either locally (abelian-by-finite) or locally solvable. Clearly, in both cases, the group $G$ satisfies the requirement of the lemma. 
Hence, the proof of the lemma is now complete.
\end{proof}

We are now in the position to prove the following theorem which can be considered as the main result of this section.
\begin{theorem}\label{th:2.3} Let $D$ be a division ring, and $G$ an almost subnormal subgroup of the multiplicative group $D^*$. If $G$ contains a non-abelian locally generalized radical subgroup, then $G$ contains a non-cyclic free subgroup.
\end{theorem}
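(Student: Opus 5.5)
The plan is to reduce the statement to the earlier result of Hai and Chua \cite[Theorem 3.5]{Pa_HaiChua_2025}. That result says that an almost subnormal subgroup of $D^*$ containing a non-abelian locally (solvable-by-finite) subgroup contains a non-cyclic free subgroup. So it suffices to produce, inside $G$, a non-abelian locally (solvable-by-finite) subgroup. Lemma~\ref{lem:2.2} is designed for exactly this. The only gap is that our hypothesis is \emph{locally} generalized radical rather than generalized radical, so we first pass to a finitely generated piece.

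\textbf{Step 1: reduce to a finitely generated subgroup.} Let $H\le G$ be a non-abelian locally generalized radical subgroup. Choose $x,y\in H$ with $xy\ne yx$, and set $K=\langle x,y\rangle$. Since $H$ is locally generalized radical, every finitely generated subgroup of $H$ is generalized radical. This is how I read ``locally $\mathfrak{X}$'', consistently with \cite{Pa_ChuaHai_2026}. Hence $K$ is a non-abelian generalized radical group. If instead the definition means that every finite subset lies in a generalized radical subgroup, the same argument works with $K$ replaced by such a subgroup containing $x,y$; that subgroup is still non-abelian.

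\textbf{Step 2: apply Lemma~\ref{lem:2.2}.} Lemma~\ref{lem:2.2} applied to $K$ gives a non-abelian locally (solvable-by-finite) subgroup $L\le K\le H\le G$.

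\textbf{Step 3: conclude.} Since $G$ is almost subnormal in $D^*$ and contains $L$, \cite[Theorem 3.5]{Pa_HaiChua_2025} yields a non-cyclic free subgroup of $G$.

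The only point needing care is Step 1: the notion ``locally generalized radical'' must be such that some finitely generated non-abelian subgroup is genuinely generalized radical. Under either standard reading this is immediate. The substantive free-subgroup work is carried entirely by \cite[Theorem 3.5]{Pa_HaiChua_2025}, and the group-theoretic reduction is carried by Lemma~\ref{lem:2.2}, whose transfinite induction is already done.
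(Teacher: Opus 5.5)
Your proof is correct and follows the paper's argument exactly. You pass to a two-generated non-abelian subgroup, which is generalized radical; you extract a non-abelian locally (solvable-by-finite) subgroup via Lemma~\ref{lem:2.2}; and you invoke \cite[Theorem 3.5]{Pa_HaiChua_2025}. One small point in your favor: you apply that theorem to the almost subnormal group $G$ itself, which is the correct application, whereas the paper's text literally applies it to the two-generated subgroup $N$, which need not be almost subnormal in $D^*$.
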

\begin{proof} Suppose that $G$ is an almost subnormal subgroup of $D^*$ containing a non-abelian locally generalized radical subgroup, say $H$. Take non-commuting elements $a,b\in H$, and set $N=\langle a, b\rangle$, the subgroup of $H$ generated by $a$ and $b$. Then, $N$ is a non-abelian generalized radical group, and by Lemma~\ref{lem:2.2}, $N$ contains a non-abelian locally (solvable-by-finite) subgroup. According to~\cite[Theorem 3.5]{Pa_HaiChua_2025}, $N$ contains a non-cyclic free subgroup, and so does $G$. 
\end{proof}
Before proving the next theorem, it is useful to make a simple observation as follows. Let $G$ be a subgroup of the multiplicative group $D^*$ of a division ring $D$. If $G$ is a non-abelian, then it is obvious that $G$ is non-central. However, non-central abelian subgroups of $D^*$ may exist. For instance, if $a\in D\backslash F$ then the cyclic subgroup $\langle a\rangle$ generated by the element $a$ is non-central, where $F$ is the center of $D$. Now, suppose that $G$ is non-central. We can show that if $G$ is almost subnormal in $D^*$, then $G$ is non-abelian. Indeed, by \cite[Proposition 2.2 ]{Pa_Khanh-Hai_2022}, $G$ contains a non-central subnormal subgroup, say $H$ of $D^*$. If $G$ is abelian, then so is $H$. In view of \cite[Theorem 4]{Pa_Stuth_1964}, $H$ is central, a contradiction. For the convenience of use, let us state this fact as a lemma as follows.
\begin{lemma}\label{lemma 2.4}
Let $D$ be a division ring, and $G$ an almost subnormal subgroup of the multiplicative group $D^*$. Then, $G$ is non-central if and only if $G$ is non-abelian.
\end{lemma}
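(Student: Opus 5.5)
The statement is an equivalence, and one direction is trivial. If $G$ is non-abelian, then some $x,y\in G$ satisfy $xy\ne yx$. Since central elements commute with everything, not both of $x,y$ lie in $F$, so $G\not\subseteq F$ and $G$ is non-central.

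For the converse, assume $G$ is almost subnormal and non-central. We want to show $G$ is non-abelian, and we argue by contradiction. The plan is to pass to a subnormal subgroup and then apply Stuth's theorem.

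First, we invoke \cite[Proposition 2.2]{Pa_Khanh-Hai_2022}. It states that a non-central almost subnormal subgroup of $D^*$ contains a non-central subgroup $H$ that is subnormal in $D^*$. Now suppose $G$ were abelian. Then $H\le G$ is abelian, hence solvable, and it is subnormal in $D^*$. By \cite[Theorem 4]{Pa_Stuth_1964}, a solvable subnormal subgroup of $D^*$ is central. So $H\subseteq F$, which contradicts the choice of $H$. Hence $G$ is non-abelian.

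The only nontrivial step is producing the non-central subnormal subgroup inside $G$. If one wanted to avoid citing it, the natural approach is induction along the almost subnormal series $G=G_r\le G_{r-1}\le\cdots\le G_0=D^*$, in which each step is either normal or of finite index. At a finite-index step one replaces the current subgroup by its normal core in the next term, which still has finite index. The key fact needed is that a finite-index subgroup of a non-central subnormal subgroup of $D^*$ is itself non-central. This fact is where the real work lies, for instance via the Cartan–Brauer–Hua-type results for subnormal subgroups. Since it is available from the cited proposition, we use that reference directly.
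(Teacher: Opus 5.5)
Your proof is correct and follows the paper's argument exactly. You prove the trivial direction, then use \cite[Proposition 2.2]{Pa_Khanh-Hai_2022} to get a non-central subnormal subgroup $H\le G$ of $D^*$, and \cite[Theorem 4]{Pa_Stuth_1964} to force such an $H$ into $F$ if $G$ were abelian. Your closing sketch for avoiding the citation is not needed. As written it only secures non-centrality at the finite-index steps, and the normal steps of the series would need their own argument, e.g.\ Stuth's theorem applied to the centralizers $C_D(h)$.
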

Due to Theorem \ref{th:2.3} and Lemma \ref{lemma 2.4}, we can get the following result, which completes the proof of \cite[Conjecture 1]{Pa_ChuaHai_2026} in the remaining case $n=1$.
\begin{theorem}\label{th:2.5} Let $D$ be a division ring, and $G$ an almost subnormal subgroup of the multiplicative group $D^*.$ If $G$ is locally generalized radical, then $G$ is central.
\end{theorem}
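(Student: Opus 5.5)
We argue by contradiction, combining Theorem~\ref{th:2.3} with Lemma~\ref{lemma 2.4}. Suppose that $G$ is an almost subnormal subgroup of $D^*$ that is locally generalized radical but not central. By Lemma~\ref{lemma 2.4}, $G$ is then non-abelian. Thus $G$ is itself a non-abelian locally generalized radical subgroup of the almost subnormal subgroup $G$ of $D^*$. Theorem~\ref{th:2.3} therefore applies with $H=G$, and it yields a non-cyclic free subgroup of $G$. Such a subgroup contains a free subgroup $\langle x,y\rangle$ of rank $2$.

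It remains to show that a free group of rank $2$ cannot sit inside a locally generalized radical group. Since $G$ is locally generalized radical, the finitely generated subgroup $\langle x,y\rangle$ is generalized radical. Because $\langle x,y\rangle$ is a non-abelian free group, Lemma~\ref{lem:2.2} gives a non-abelian locally (solvable-by-finite) subgroup $K\le\langle x,y\rangle$.

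Every subgroup of a free group is free, so $K$ is a non-abelian free group. Choose non-commuting $u,v\in K$. Then $\langle u,v\rangle$ is free of rank $2$, since a $2$-generated free group that is non-abelian is free on its two generators by the Hopfian property. Since $K$ is locally (solvable-by-finite), $\langle u,v\rangle$ is solvable-by-finite. However, every finite-index subgroup of a non-abelian free group is again non-abelian free, hence not solvable. This is a contradiction. Consequently $G$ is abelian, and Lemma~\ref{lemma 2.4} gives that $G$ is central.

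The main obstacle is this last step, namely that generalized radical groups contain no non-abelian free subgroups. We handle it without any deep input by passing through Lemma~\ref{lem:2.2}, which reduces the question to the elementary fact that a non-abelian free group is not solvable-by-finite. As an alternative route, one could show directly that the class of generalized radical groups is closed under subgroups, and that any non-trivial generalized radical group has a non-trivial normal subgroup that is locally finite or locally nilpotent. This is impossible in a free group of rank $2$: a non-trivial locally finite normal subgroup would contain torsion, and a non-trivial locally nilpotent normal subgroup would be cyclic, but non-trivial normal subgroups of a free group of rank $2$ are not cyclic.
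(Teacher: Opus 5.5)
Your proof is correct and follows the paper's argument: Lemma~\ref{lemma 2.4} makes a non-central $G$ non-abelian, Theorem~\ref{th:2.3} then gives a non-cyclic free subgroup inside the locally generalized radical group $G$, and this is a contradiction. The only difference is in the last step. The paper cites \cite[Theorem 2.4]{Pa_ChuaHai_2026} for the fact that locally generalized radical groups contain no non-cyclic free subgroups, whereas you derive it directly from Lemma~\ref{lem:2.2}, the Nielsen--Schreier theorem and the Schreier index formula; this derivation is valid and makes the argument self-contained.
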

\begin{proof} Suppose that $G$ is a locally generalized radical almost subnormal subgroup of the multiplicative group $D^*.$ We have to show that $G$ is central. Suppose by contrary that $G$ is non-central. Then, by Lemma \ref{lemma 2.4}, $G$ is non-abelian. Take some noncommuting elements $a, b\in G$, and let $H=\langle a, b\rangle$ be the subgroup of $G$ generated by $a$ and $b$. From the hypothesis, it follows that $H$ is a non-abelian generalized radical group, and so by Theorem \ref{th:2.3}, $G$ contains a non-cyclic free subgroup. But, this contradicts to~\cite[Theorem 2.4]{Pa_ChuaHai_2026}. Hence, $G$ is central as required to prove. 
\end{proof}
\begin{corollary}\label{cor:2.6}	
Let $D$ be a non-commutative division ring with the center $F$, and $G$ a subgroup of the derived group $D'$ of the multiplicative group $D^*$. If $G$ is a locally generalized radical almost subnormal subgroup of $D'$, then $G$ is contained in the center $Z(D')$. Moreover,  the following assertions hold:
   \begin{itemize}
	  \item [(1)] The central factor group $D^*/F^*$ contains no non-trivial locally generalized radical almost subnormal subgroups.
	  \item [(2)] The central factor group $D'/Z(D')$ contains no non-trivial locally generalized radical almost subnormal subgroups.
	  \item [(3)] The derived subgroup $(D^*/F^*)'$ contains no non-trivial locally generalized radical almost subnormal subgroups.
    \end{itemize}
\end{corollary}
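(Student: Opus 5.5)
The plan is to reduce everything to Theorem~\ref{th:2.5} via transitivity of almost subnormality, and then to handle the central quotients by pulling subgroups back along the projection maps. For the main claim, let $G$ be a locally generalized radical almost subnormal subgroup of $D'$. Since $D'$ is normal in $D^*$, we may compose an almost subnormal series from $G$ to $D'$ with the normal step $D'\trianglelefteq D^*$. This gives an almost subnormal series from $G$ to $D^*$, so $G$ is almost subnormal in $D^*$. Theorem~\ref{th:2.5} then gives $G\subseteq F$, hence $G\subseteq F\cap D'$. Every element of $F^*\cap D'$ commutes with all of $D'$, so $G\subseteq Z(D')$. (In fact $Z(D')=F^*\cap D'$ for noncommutative $D$, by the Cartan--Brauer--Hua type result that $D'$ is non-central and its centralizer is $F$, but we only need the easy inclusion.)

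For (1), let $\bar G=G/F^*$ be a locally generalized radical almost subnormal subgroup of $D^*/F^*$. The preimage $G$ is almost subnormal in $D^*$, because preimages of the terms of an almost subnormal series form an almost subnormal series: normality and finite index are preserved under taking preimages. The key step is to show that $G$ is locally generalized radical. A finitely generated subgroup $K\le G$ has $K/(K\cap F^*)$ generalized radical, and $K\cap F^*$ is central in $K$, hence abelian and normal. Prefixing the ascending series of $K/(K\cap F^*)$ with $K\cap F^*$ gives an ascending normal series of $K$ whose factors are locally nilpotent or locally finite, so $K$ is generalized radical. Theorem~\ref{th:2.5} then yields $G\subseteq F^*$, so $\bar G$ is trivial.

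For (2), the argument is the same with $D'$, $Z(D')$ in place of $D^*$, $F^*$. The preimage $G\le D'$ is locally generalized radical, by the same central-extension argument, and almost subnormal in $D'$, hence in $D^*$. So $G\subseteq F\cap D'\subseteq Z(D')$ by the main claim, and $\bar G$ is trivial.

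For (3), note that $(D^*/F^*)'=D'F^*/F^*\cong D'/(D'\cap F^*)$. Let $\bar G$ be a locally generalized radical almost subnormal subgroup of $(D^*/F^*)'$. Since $(D^*/F^*)'$ is normal in $D^*/F^*$, $\bar G$ is almost subnormal in $D^*/F^*$, and (1) gives $\bar G=1$.

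The only step needing genuine care is the lifting of the locally generalized radical property through a central extension. As outlined, this works because an abelian group is locally nilpotent, so it may serve as the first factor of the series. Everything else is bookkeeping with almost subnormal series and preimages.
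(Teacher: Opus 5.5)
Your proof is correct and follows the paper's route. The main claim comes from composing the almost subnormal series with $D'\trianglelefteq D^*$ and applying Theorem~\ref{th:2.5}, and (1) and (2) come from pulling back along the projection and noting that a central extension of a locally generalized radical group is again locally generalized radical (the paper cites \cite[Lemma 2.1]{Pa_ChuaHai_2026} for this; you prove it directly). The only deviation is in (3): you reduce to (1) using that $(D^*/F^*)'$ is normal in $D^*/F^*$, whereas the paper uses the isomorphism $(D^*/F^*)'\cong D'/Z(D')$ together with (2); your route has the small advantage of not needing the equality $Z(D')=D'\cap F^*$.
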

\begin{proof} Assume that $G$ is a locally generalized radical almost subnormal subgroup of $D'$. Then, $G$ is also a locally generalized radical almost subnormal subgroup of the multiplicative group $D^*$. Hence, by Theorem \ref{th:2.5}, $G\le F^*,$ and so by~\cite[Lemma~ 3.1]{Pa_ChuaHai_2026}, $G\leq D'\cap F^*=Z(D')$. Suppose that $D^*/F^*$ contains a non-trivial locally generalized radical almost subnormal subgroup, say $L$. Denote by $H$ the preimage of $L$ via the natural epimorphism $D^*\longrightarrow D^*/F^*$.  Then, $H$ is non-central almost subnormal in $D^*.$ By~\cite[Lemma 2.1]{Pa_ChuaHai_2026}, $H$ is locally generalized radical. Since this fact contradicts to Theorem~\ref{th:2.5}, the conclusion of (1) follows. The conclusion of (2) follows by the similar argument as in the proof of (1). The final assertion follows from the fact that the derived subgroup $(D^*/F^*)'$ is isomorphic to the group $D'/Z(D')$ (see~\cite[Lemma 3.1]{Pa_ChuaHai_2026}) and the conclusion of (2). 
\end{proof}

\section{Periodic skew linear groups}\label{S3}
Let $G$ be any group. The set of all periodic elements in $G$ denoted by $T(G)$ may not be a subgroup of $G$. However, periodic normal subgroups of $G$ always exist, and a unique maximal periodic normal subgroup of $G$, denoted by $Tor(G)$, is called the \textit{periodic radical} of $G$. It may happen that $Tor(G)=T(G)$, and in this case, we say that $\mathit{Tor}(G)$ is the \textit{torsion subgroup} of $G.$ For instance, if $G$ is a locally nilpotent group, then $\mathit{Tor}(G)=T(G)$ and the quotient group $G/\mathit{Tor}(G)$ is torsion-free (see, e.g. ~\cite[Proposition 1.2.11]{Bo_Dixon_2017}). In this section, we investigate the existence of torsion subgroups in skew linear groups and related problems. Let $D$ be a division ring, $n$ a positive integer, and $G$ an almost subnormal subgroup of the general linear group $\mathrm{GL}_n(D)$ of degree $n$ over $D$. We seek the necessary and sufficient conditions for the existence of the torsion subgroup $Tor(G)$ of the group $G$ (see Proposition \ref{pro:3.5}). In order to get these conditions, we have to use some previous result on periodic skew linear groups, especially of degree $1$.  Historically, in 1978, Herstein \cite[Theorem~8]{Pa_Herstein_1978} proved that if $G$ is a periodic subnormal subgroup of the multiplicative group $D^*$ then $G$ must be contained in the center $F$ of $D$. Here, basing on our recent result in \cite[Theorem 2.1]{Pa_ChNaHa_24}, it is easy to get the same result as in \cite[Theorem~8]{Pa_Herstein_1978} but for the case when $G$ is assumed to be an almost subnormal subgroup of the group $D^*$ (see Theorem \ref{th:3.2}). This theorem turns out useful in the study of periodic radicals of almost subnormal subgroups in multiplicative groups of division rings (see for example, Theorem \ref{th:3.12}). Further, we use the result obtained previously in 
Theorem~\ref{th:2.5} to describe the structure of 
periodic-by-(locally generalized radical) group of the multiplicative group of a division ring (see Theorem \ref{th:3.14}).
\begin{lemma}\label{lem:3.1} Let $D$ be a division ring that is not a locally finite field, and $n$ a positive integer. If $n\geq 2$, then the following assertions hold:
	\begin{enumerate}
		\rm\item\textit{The groups $\mathrm{SL}_n(D)$ is not periodic, and so is $\mathrm{GL}_n(D)$.}
		\item\textit{The group $\mathrm{PSL}_n(D)$ is not periodic.}
		\item\textit{The commutator subgroup $\mathrm{PGL}_n'(D)$ is not periodic, and so is $\mathrm{PGL}_n(D)$.}
	\end{enumerate}	
\end{lemma}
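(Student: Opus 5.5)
We will exhibit explicit elements of infinite order, using the dichotomy that $D$ is either non-commutative or a field that is not locally finite. The key observation is that every unipotent elementary matrix $E=I_n+a e_{12}$ with $a\in D$ lies in $\mathrm{SL}_n(D)$, and also in the commutator subgroup $\mathrm{SL}_n(D)=\mathrm{GL}_n(D)'$. For $n\ge 2$ this holds except for the single case $n=2$, $D=\mathbb{F}_2$, which is excluded since a finite field is locally finite. Moreover, $E^m=I_n+ma\,e_{12}$.

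\emph{Case 1: $\mathrm{char}\,D=0$.} Take $a=1$. Then $E^m=I_n+m e_{12}\neq I_n$ for all $m\ge1$, so $E$ has infinite order. This shows that $\mathrm{SL}_n(D)$, and hence $\mathrm{GL}_n(D)$, is not periodic. For (2) and (3), recall that the center of $\mathrm{GL}_n(D)$ consists of the scalar matrices $\lambda I_n$ with $\lambda\in F^*$, where $F$ is the center of $D$. The center of $\mathrm{SL}_n(D)$ also consists of scalars. If $E^m$ were scalar for some $m\ge1$, then comparing off-diagonal entries would force $m=0$ in $D$, which is impossible. Hence the image of $E$ in $\mathrm{PSL}_n(D)$, and likewise in $\mathrm{PGL}_n'(D)=\mathrm{SL}_n(D)Z/Z\cong \mathrm{PSL}_n(D)$, has infinite order.

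\emph{Case 2: $\mathrm{char}\,D=p>0$.} Unipotent elements now have order $p$, so we use diagonal matrices instead. We need some $x\in D^*$ of infinite multiplicative order. If every element of $D^*$ had finite order, then $D$ would be periodic over its prime field $\mathbb{F}_p$. By Jacobson's theorem (every algebraic element over $\mathbb{F}_p$ lies in a finite subfield, and $D$ is then commutative), $D$ would be a field that is algebraic over $\mathbb{F}_p$, that is, a locally finite field. This contradicts the hypothesis, so such an $x$ exists. Put
$$g=\mathrm{diag}(x,x^{-1},1,\dots,1).$$
The Dieudonné determinant of $g$ is trivial, so $g\in\mathrm{SL}_n(D)$, and $g$ has infinite order.

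It remains to handle the projective images in Case 2. Suppose $g^m=\lambda I_n$ with $\lambda$ central. If $n\ge3$, then the entry $1$ gives $\lambda=1$, so $x^m=1$, a contradiction. If $n=2$, we get $x^m=x^{-m}=\lambda$, so $x^{2m}=1$, again a contradiction. Hence the image of $g$ in $\mathrm{PSL}_n(D)$ and in $\mathrm{PGL}_n'(D)$ has infinite order, and the statements for $\mathrm{GL}_n(D)$ and $\mathrm{PGL}_n(D)$ follow because they contain these subgroups.

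The main step is the appeal to Jacobson's periodicity theorem in Case 2. The remaining delicate point is the identification of $\mathrm{GL}_n(D)'$ with $\mathrm{SL}_n(D)$ and of the centers as scalars; both are standard results of Dieudonné theory.
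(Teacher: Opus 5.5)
Your proof is correct, but it takes a different route from the paper. The paper constructs no elements. It cites \cite[4.5.1]{Pa_ShWe_86}, which gives a non-cyclic free subgroup $N\le \mathrm{SL}_n(D)$ whenever $n\ge 2$ and $D$ is not a locally finite field. Then $N\cap Z(\mathrm{SL}_n(D))\le Z(N)=1$, so $N$ embeds in $\mathrm{PSL}_n(D)\cong\mathrm{PGL}_n'(D)$, and all three assertions follow at once.

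You instead exhibit explicit elements of infinite order. In characteristic $0$ you use the transvection $I_n+e_{12}$. In characteristic $p$ you use $\mathrm{diag}(x,x^{-1},1,\dots,1)$, where Jacobson's theorem supplies $x\in D^*$ of infinite order exactly because $D$ is not a locally finite field. You then check directly that no positive power of these elements is a central scalar, which handles $\mathrm{PSL}_n(D)$ and $\mathrm{PGL}_n'(D)$.

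Your argument is elementary and self-contained. It uses only Jacobson's theorem, $\mathrm{diag}(x,x^{-1})\in\mathrm{SL}_2(D)$, $\mathrm{SL}_n(D)=\mathrm{GL}_n(D)'$ away from $(n,D)=(2,\mathbb{F}_2)$, and the fact that the centers are scalar. It also shows precisely where the hypothesis on $D$ enters. The paper's argument is a two-line reduction to a deep theorem and proves more, namely that $\mathrm{PSL}_n(D)$ contains a non-cyclic free subgroup, although only non-periodicity is used later (in Theorem~\ref{th:3.4} and Proposition~\ref{pro:5.6}).

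One minor presentational point: your opening announces a split into ``non-commutative versus non-locally-finite field,'' but the proof actually splits by characteristic. The Jacobson step is what connects the two, and it would read more cleanly if you said so.
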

\begin{proof} 
	(1) By \cite[4.5.1]{Pa_ShWe_86}, the group  $\mathrm{SL}_n(D)$ contains a non-cyclic free subgroup, so it cannot be periodic. As a consequence, the group $\mathrm{GL}_n(D)$ is not periodic, too.
	
	(2) Assume that the group $\mathrm{PSL}_n(D)$ is periodic. Let $N$ be a  non-cyclic free subgroup of $\mathrm{SL}_n(D)$. Then, $N\cap Z(\mathrm{SL}_n(D))\triangleleft N$, and  $$N/(N\cap Z(\mathrm{SL}_n(D)))\cong NZ(\mathrm{SL}_n(D))/Z(\mathrm{SL}_n(D))$$
	is a periodic group. Since $N$ is a non-cyclic free group, the center $Z(N)=1,$ and it follows that $N\cap Z(\mathrm{SL}_n(D))=1.$ This implies that $N$ is periodic, a contradiction. 
	
	(3) Because $\mathrm{PGL}_n'(D)\cong \mathrm{PSL}_n(D),$ the statement is obvious in view of (2).
\end{proof}
\begin{remark} If $D$ is a locally finite field and $n$ a positive integer, then all the groups $$\mathrm{SL}_n(D),\ \mathrm{GL}_n(D),\ \mathrm{PSL}_n(D),\ \mathrm{PGL}_n(D),\ \mathrm{PGL}_n'(D)$$ are locally finite, and so they are periodic.
\end{remark}
\begin{theorem}\label{th:3.2} Let $D$ be a division ring, and $n$ a positive integer. Suppose that $G$ is an almost subnormal subgroup of the  general skew linear group $\mathrm{GL}_n(D)$. If $G$ is periodic, then $G$ is central provided that $D$ is not a locally finite field in case $n\ge 2$.
\end{theorem}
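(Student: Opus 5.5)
We will deduce Theorem~\ref{th:3.2} from results already available, since a periodic group is locally finite in the relevant situations or at least generalized radical locally. The cleanest route is: a periodic group need not be locally finite in general, so we cannot simply invoke Theorem~\ref{th:2.5} or \cite[Theorem 3.3]{Pa_ChuaHai_2026} directly. Instead we use the fact that periodic skew linear groups are locally finite: by a classical result (periodic subgroups of $\mathrm{GL}_n(D)$ are locally finite when $D$ is a division ring? — this is not true in general, e.g.\ it is open), so we avoid this and argue via free subgroups.

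The plan is therefore as follows. Case $n=1$: let $G$ be a periodic almost subnormal subgroup of $D^*$ and suppose $G$ is non-central. By \cite[Proposition 2.2]{Pa_Khanh-Hai_2022}, $G$ contains a non-central subnormal subgroup $H$ of $D^*$, and $H$ is periodic as a subgroup of $G$. Now Herstein's theorem \cite[Theorem~8]{Pa_Herstein_1978} says that a periodic subnormal subgroup of $D^*$ is central, contradicting the choice of $H$. Hence $G$ is central. (Alternatively, one may quote \cite[Theorem 2.1]{Pa_ChNaHa_24}, as the introduction of this section suggests, which yields the same reduction from almost subnormal to subnormal.)

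Case $n\ge 2$ with $D$ not a locally finite field: suppose $G$ is non-central almost subnormal in $\mathrm{GL}_n(D)$. We will invoke the known structure result for non-central almost subnormal subgroups of $\mathrm{GL}_n(D)$, $n\ge2$: such a subgroup contains $\mathrm{SL}_n(D)$ (this is the almost subnormal analogue of the classical normal/subnormal result, available in the literature used in \cite{Pa_ChuaHai_2026}, e.g.\ the result underlying \cite[Theorem 3.3]{Pa_ChuaHai_2026}; note $D$ not locally finite field excludes the small exceptional cases $\mathrm{GL}_2(\mathbb F_2),\mathrm{GL}_2(\mathbb F_3)$). Then $\mathrm{SL}_n(D)\le G$ would be periodic, contradicting Lemma~\ref{lem:3.1}(1). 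Hence $G$ is central.

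The main obstacle is the $n\ge2$ step: making sure the cited result ``non-central almost subnormal implies containing $\mathrm{SL}_n(D)$'' is available in exactly this generality. If it is not, I would instead pass to a non-central subnormal subgroup (via the matrix analogue of \cite[Proposition 2.2]{Pa_Khanh-Hai_2022}) and then use the classical theorem that non-central subnormal subgroups of $\mathrm{GL}_n(D)$, $n\ge2$, $|D|>3$, contain $\mathrm{SL}_n(D)$, again concluding via Lemma~\ref{lem:3.1}.
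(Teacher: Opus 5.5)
Your proof is correct and follows the paper's argument. For $n=1$ the paper likewise passes from $G$ to a non-central subnormal subgroup (citing \cite[Theorem 2.1]{Pa_ChNaHa_24}) and applies Herstein's \cite[Theorem 8]{Pa_Herstein_1978}; for $n\ge 2$ it invokes \cite[Theorem 3.3]{Pa_nbh_17} to get $\mathrm{SL}_n(D)\le G$ and then contradicts Lemma~\ref{lem:3.1}. One correction to your parenthetical remark: the hypothesis that $D$ is not a locally finite field is not there to exclude the groups $\mathrm{GL}_2(\mathbb{F}_2)$ and $\mathrm{GL}_2(\mathbb{F}_3)$, but is essential, since for a finite field every subgroup is almost subnormal, and for any locally finite field $D$ the whole group $\mathrm{GL}_n(D)$ is periodic and non-central.
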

\begin{proof} Deny the theorem, suppose that $G$ is non-central. 
	
	\textit{Case 1. $n=1$:}
	
	Since $G$ is non-central, by \cite[Theorem 2.1]{Pa_ChNaHa_24}, $G$ contains a non-central subnormal subgroup $N$ of $D^*$. But this contradicts to a result of Herstein in~\cite[Theorem 8]{Pa_Herstein_1978}. 
	
	\textit{Case 2. $n\ge 2$:}
	
	In this case, $D$ is a not locally finite field. Since $G$ is non-central, in view of \cite[Theorem~3.3]{Pa_nbh_17}, $G$ contains  $\mathrm{SL}_n(D)$, and so $\mathrm{SL}_n(D)$ is a periodic group. This contradicts to the conclusion of Lemma~\ref{lem:3.1}.
\end{proof}
\begin{corollary}\label{cor:3.3} Let $D$ be a division ring, and $n$ a positive integer. If $H$ is a periodic almost subnormal subgroup of $\mathrm{SL}_n(D)$, then $H$ is contained in the center $Z(\mathrm{SL}_n(D))$ provided that $D$ is not a locally finite field in case $n\ge 2$. 
\end{corollary}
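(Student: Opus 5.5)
The plan is to reduce Corollary~\ref{cor:3.3} to Theorem~\ref{th:3.2} by passing from $\mathrm{SL}_n(D)$ to $\mathrm{GL}_n(D)$. The key observation is that $\mathrm{SL}_n(D)$ is normal in $\mathrm{GL}_n(D)$: for $n\ge 2$ it is the derived subgroup of $\mathrm{GL}_n(D)$ (when $D$ is not $\mathbb{F}_2$ with $n=2$, and it is the kernel of the Dieudonn\'e determinant in any case). For $n=1$ we interpret $\mathrm{SL}_1(D)$ as $D'$, which is normal in $D^*$.

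First, I would check that almost subnormality is transitive along a normal subgroup. Since $H$ is almost subnormal in $\mathrm{SL}_n(D)$, there is a finite chain
\[
H=H_r\le H_{r-1}\le\cdots\le H_0=\mathrm{SL}_n(D),
\]
where each step is either normal or of finite index. Appending the normal step $\mathrm{SL}_n(D)\trianglelefteq \mathrm{GL}_n(D)$ gives a chain of the same type from $H$ to $\mathrm{GL}_n(D)$. Hence $H$ is a periodic almost subnormal subgroup of $\mathrm{GL}_n(D)$.

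Next, I would apply Theorem~\ref{th:3.2} under the same hypothesis on $D$. This gives $H\le Z(\mathrm{GL}_n(D))$, which is the set of scalar matrices with entries in $F^*$. Since $H\le \mathrm{SL}_n(D)$, we get $H\le Z(\mathrm{GL}_n(D))\cap\mathrm{SL}_n(D)$. Every element of this intersection is central in $\mathrm{GL}_n(D)$, hence commutes with all of $\mathrm{SL}_n(D)$, so the intersection lies in $Z(\mathrm{SL}_n(D))$. This yields $H\le Z(\mathrm{SL}_n(D))$.

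The only real point of care is the first step: making sure that $\mathrm{SL}_n(D)$ is normal in $\mathrm{GL}_n(D)$ under the paper's conventions, including the degenerate case $n=1$. This is immediate from the kernel-of-determinant description, and for $n=1$ from the normality of $D'$ in $D^*$. Everything else is a direct application of Theorem~\ref{th:3.2}.
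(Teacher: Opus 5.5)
Your proposal is correct and follows the paper's proof: you regard $H$ as a periodic almost subnormal subgroup of $\mathrm{GL}_n(D)$, apply Theorem~\ref{th:3.2} to get $H\le Z(\mathrm{GL}_n(D))$, and then intersect with $\mathrm{SL}_n(D)$. You also justify why almost subnormality passes up to $\mathrm{GL}_n(D)$ (via normality of $\mathrm{SL}_n(D)$), which the paper leaves implicit, and you use only the trivial inclusion $Z(\mathrm{GL}_n(D))\cap\mathrm{SL}_n(D)\subseteq Z(\mathrm{SL}_n(D))$ where the paper cites the equality as well known.
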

\begin{proof} Assume that $H$ is a periodic almost subnormal subgroup of the skew special linear group $\mathrm{SL}_n(D).$ Then, $H$ is a periodic almost subnormal subgroup of the skew general linear group $\mathrm{GL}_n(D).$ According to Theorem \ref{th:3.2}, $H$ is contained in the center $Z(\mathrm{GL}_n(D)).$ It is well-known that $$Z(\mathrm{SL}_n(D))=\mathrm{SL}_n(D)\cap Z(\mathrm{GL}_n(D)).$$ From the facts, it follows that $H$ is contained in the center $Z(\mathrm{SL}_n(D)).$
\end{proof}
\begin{theorem}\label{th:3.4} Let $D$ be a division ring, and $n$ a positive integer. Assume that one of the following two conditions holds:
	\begin{itemize}
		\item [(1)] $n=1$ and $D$ is weakly locally finite.
		\item [(2)] $n\geq 2$ and $D$ is not a locally finite field.
	\end{itemize}
	Then, the projective general skew linear group $\mathrm{PGL}_n(D)$ contains no non-trivial periodic almost subnormal subgroups. Consequently, the projective special skew  linear group $\mathrm{PSL}_n(D)$ contains no non-trivial periodic almost subnormal subgroups.
\end{theorem}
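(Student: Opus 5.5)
Suppose, for a contradiction, that $\bar G$ is a non-trivial periodic almost subnormal subgroup of $\mathrm{PGL}_n(D)=\mathrm{GL}_n(D)/F^*$, where $F$ is the center of $D$ and $F^*$ is identified with the scalar matrices. Let $G$ be the full preimage of $\bar G$ under the natural epimorphism $\pi\colon\mathrm{GL}_n(D)\to\mathrm{PGL}_n(D)$. Preimages of the members of an almost subnormal series of $\bar G$ form an almost subnormal series of $G$, so $G$ is almost subnormal in $\mathrm{GL}_n(D)$. Also $G$ is non-central, since $\bar G\neq 1$, and $G/F^*\cong\bar G$ is periodic. Hence $G$ is central-by-periodic, i.e.\ abelian-by-periodic. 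We treat the two cases separately.

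\emph{Case (2): $n\ge 2$ and $D$ not a locally finite field.} Since $G$ is non-central almost subnormal, \cite[Theorem~3.3]{Pa_nbh_17} (the result used in the proof of Theorem~\ref{th:3.2}) gives $\mathrm{SL}_n(D)\le G$. Then $\pi(\mathrm{SL}_n(D))\cong\mathrm{SL}_n(D)/(\mathrm{SL}_n(D)\cap F^*)=\mathrm{PSL}_n(D)$ is a subgroup of $\bar G$, hence periodic. This contradicts Lemma~\ref{lem:3.1}(2).

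\emph{Case (1): $n=1$ and $D$ weakly locally finite.} Now $G\le D^*$ is non-central almost subnormal with $G/F^*$ periodic, so $G$ is radical over $F$. I would show $G$ is locally generalized radical and then apply Theorem~\ref{th:2.5} to force $G\le F^*$, a contradiction. Take a finitely generated subgroup $H=\langle x_1,\dots,x_k\rangle$ of $G$. By weak local finiteness, the division subring $K$ generated by $F$ and $x_1,\dots,x_k$ is finite-dimensional over its center $Z(K)$. The image of $H$ in $K^*/Z(K)^*$ is periodic: each $x_i^{m}\in F\subseteq Z(K)$. That image is also a finitely generated linear group over the field $Z(K)$, via the regular representation of $K$ and the adjoint action. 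By Schur's theorem it is therefore finite. So $H$ is central-by-finite, in particular generalized radical. Hence $G$ is locally generalized radical, and Theorem~\ref{th:2.5} applies.

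Alternatively, in Case (1) one could try to invoke a known result that radical almost subnormal subgroups in weakly locally finite division rings are central. The Schur argument above is the self-contained route.

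For the consequence about $\mathrm{PSL}_n(D)$: since $\mathrm{PSL}_n(D)\cong\mathrm{PGL}_n'(D)\trianglelefteq\mathrm{PGL}_n(D)$, an almost subnormal subgroup of $\mathrm{PSL}_n(D)$ is almost subnormal in $\mathrm{PGL}_n(D)$, so the first part applies. For $n=1$ this group is trivial anyway.

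The main obstacle is Case (1). The delicate point is checking that the image of $H$ is genuinely a linear group over a field. I would handle this by passing to the finite-dimensional central division algebra $K$ over $Z(K)$ and using the embedding $K^*/Z(K)^*\hookrightarrow\mathrm{GL}(K\otimes_{Z(K)}K^{op})$ via conjugation.
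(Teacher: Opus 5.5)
Your proof is correct. Case (2) and the deduction for $\mathrm{PSL}_n(D)$ match the paper. The paper passes through $\mathrm{PGL}_n'(D)$ and Lemma~\ref{lem:3.1}(3) rather than (2), which amounts to the same thing. Your normality argument for $\mathrm{PSL}_n(D)$ is in fact cleaner than the paper's one-line justification.

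Case (1) takes a genuinely different route. The paper first shows $Z(G)=G\cap F^*$, using \cite[Theorem 2.1]{Pa_ChNaHa_24} and Stuth's theorem, so that $\bar G\cong G/Z(G)$. It then invokes \cite[Theorem 4.2]{Pa_nbh_17}: a non-central almost subnormal subgroup of a weakly locally finite division ring contains a non-cyclic free subgroup $N$. Since $N\cap Z(G)\le Z(N)=1$, the group $N$ embeds in the periodic group $\bar G$, which is absurd.

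You never produce a free subgroup yourself. You use weak local finiteness to place each finitely generated $H\le G$ inside a centrally finite $K$. Schur's theorem then makes $HZ(K)^*/Z(K)^*\le\mathrm{GL}_d(Z(K))$ finite, so $H$ is central-by-finite. Finally you call on Theorem~\ref{th:2.5}, whose proof contains the free-subgroup argument.

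What each route buys:
\begin{itemize}
\item Your route proves something stronger: when $D$ is weakly locally finite, every almost subnormal subgroup of $D^*$ that is radical over $F$ is central. This is Conjecture~\ref{conj:1} for this class, and the almost subnormal analogue of Theorem~\ref{th:4.4}.
\item It does not need the identification $Z(G)=G\cap F^*$.
\item Since you only obtain that $G$ is locally (abelian-by-finite), \cite[Theorem 3.9]{Pa_HaiChua_2025} would already suffice in place of Theorem~\ref{th:2.5}.
\item The paper's route is shorter, but it depends on the specialized free-subgroup theorem for weakly locally finite division rings.
\end{itemize}

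Four small repairs:
\begin{itemize}
\item Weak local finiteness only speaks of division subrings generated by finite sets. Either take $K$ to be the division subring generated by $x_1,\dots,x_k$ alone, in which case $F\cap K\subseteq Z(K)$ is all you need. Or add the easy check that adjoining the central field $F$ keeps $K$ centrally finite: $F(K_0)=\sum_i E e_i$, where $E$ is the field generated by $F$ and $Z(K_0)$.
\item Periodicity of the image of $H$ comes from $H\le G$ and $G/F^*$ being periodic. Periodicity of the images of the generators alone would not suffice.
\item The target of the conjugation embedding should be $(K\otimes_{Z(K)}K^{op})^\times\cong\mathrm{GL}_d(Z(K))$, or simply $\mathrm{GL}_{Z(K)}(K)$.
\item Your aside that $\mathrm{PSL}_1(D)$ is trivial depends on convention. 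With $\mathrm{SL}_1(D)=D'$, as implicitly used in Corollary~\ref{cor:2.6}, it equals $D'/Z(D')$. Your normality argument covers $n=1$ anyway, so just drop the aside.
\end{itemize}
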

\begin{proof} Let us consider the natural homomorphism
	$$\varphi: \mathrm{GL}_n(D)\longrightarrow \mathrm{PGL}_n(D)=\mathrm{GL}_n(D)/Z(\mathrm{GL}_n(D)).$$
	Assume that $\overline{G}$ is a non-trivial periodic almost subnormal subgroup of  $\mathrm{PGL}_n(D)$. Denote by $G$ the preimage of $\overline{G}$ via this homomorphism; that is, $G=\varphi^{-1}(\overline{G})$. Then, $G$ is a non-central  almost subnormal subgroup of $\mathrm{GL}_n(D)$. Clearly, $G$ properly contains $Z(\mathrm{GL}_n(D))$, and $\overline{G}=G/Z(\mathrm{GL}_n(D))$.
	
	(1) Assume that $n=1$, and $D$ is a weakly locally finite division ring. Then, $\mathrm{PGL}_1(D)=D^*/F^*$, where $F$ is the center of $D$. Since $G$ is an almost subnormal subgroup of $D^*$, $Z(G)$ is an abelian almost subnormal subgroup of $D^*$. We claim that $Z(G)$ is contained in $F$. Indeed, if otherwise then by \cite[Theorem 2.1]{Pa_ChNaHa_24}, $Z(G)$ contains a non-central abelian subnormal subgroup, say $H$ of $D^*$. But, by Stuth's result \cite[Theorem 1]{Pa_Stuth_1964}, $H$ is central, a contradiction. Hence,  $Z(G)$ is contained in $F$,  which implies that $Z(G)=G\cap F$. It follows that
	$$\overline{G}=GF^*/F^*\cong G/G\cap F^*=G/Z(G).$$
	Since $G$ is a non-central almost subnormal subgroup in a weakly locally finite division ring $D$, in view of \cite[Theorem 4.2]{Pa_nbh_17}, $G$ contains a non-cyclic free subgroup, say $N$, and so the center  $Z(N)$ of $N$ is trivial. Now, if $\overline{G}$ is periodic, then so is $G/Z(G)$. Clearly, $N\cap Z(G)\le Z(N)=1$, so we have
	$$N=N/(N\cap Z(G))\cong NZ(G)/Z(G)\le G/Z(G),$$ 
	which implies that $N$ is periodic, that is impossible because $N$ is a free group. 
	
	(2) Assume that $n\ge 2$, and $D$ is not a locally finite field. Since $G$ is non-central, in view of~\cite[Theorem~3.3]{Pa_nbh_17}, $G$ contains $\mathrm{SL}_n(D),$ and so $\overline{G}$ contains $$\mathrm{SL}_n(D)Z(\mathrm{GL}_n(D))/Z(\mathrm{GL}_n(D))=\mathrm{PGL}_n'(D).$$ By Lemma~\ref{lem:3.1}, the derived subgroup $\mathrm{PGL}_n'(D)$ is not periodic. Hence, the group $\overline{G}$ is not periodic, a contradiction. 
	
	Finally, the last conclusion follows immediately from Lemma \ref{lem:3.1} and the isomorphism $\mathrm{PSL}_n(D)\cong \mathrm{PGL}_n'(D)$.
\end{proof}
In the remaining part of this section, we study the structure of periodic radicals of skew linear groups. Let $D$ be a division ring, and $n$ a positive integer. C. P. Coelho and C. P. Millies~\cite[Proposition 3.1]{Pa_CoeMil_1998} showed that $T(D^*)$ forms a subgroup of $D^*$ if and only if $T(D^*)$ is central; that is, $T(D^*)$ is contained in the center of $D,$ and that $T(\mathrm{GL}_n(D))$ form a subgroup of $\mathrm{GL}_n(D)$ if and only if $D$ is a field, which is an algebraic extension of its prime subfield provided that $D$ has positive characteristic in case $n\geq 2$ (see~\cite[Proposition 3.3]{Pa_CoeMil_1998} and~\cite[Proposition 2.2]{Pa_Milies_1981}). Assume that $G$ is an almost subnormal subgroup of the  general skew linear group $\mathrm{GL}_n(D).$ Here, we give necessary and sufficient conditions such that $T(G)$ forms a subgroup of $G.$ The following result shows that $T(G)$ is a subgroup of $G$ if and only if $T(G)$ is central provided that $D$ is not a locally finite field in case $n\geq 2.$

\begin{proposition}\label{pro:3.5} Let $D$ be a division ring, and $n$ a positive integer. Assume that $G$ is an almost subnormal subgroup of the general skew linear group $\mathrm{GL}_n(D).$ Suppose further that $D$ is not a locally finite field in case $n\geq 2.$ Then,
	\begin{itemize}
		\item [(1)] $T(G)$ is a subgroup of $G$ iff $T(G)$ is central. In case this, $T(G)=\mathit{Tor}(G)$ and the quotient group $G/\mathit{Tor}(G)$ is torsion-free.
		\item [(2)] If $G\leq \mathrm{SL}_n(D),$ then $T(G)$ is a subgroup of $G$ iff $T(G)\subseteq Z(\mathrm{SL}_n(D)).$ In case this, $T(G)=\mathit{Tor}(G)$ and the quotient group $G/\mathit{Tor}(G)$ is torsion-free.
	\end{itemize}
\end{proposition}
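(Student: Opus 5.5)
The plan is to reduce everything to Theorem~\ref{th:3.2} by proving that when $T(G)$ is a subgroup, it is an almost subnormal subgroup of $\mathrm{GL}_n(D)$.

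\textbf{Part (1).} Suppose first that $T(G)$ is a subgroup of $G$. The set $T(G)$ is invariant under conjugation by elements of $G$, since conjugation preserves element orders. Hence $T(G)\trianglelefteq G$. Because $G$ is almost subnormal in $\mathrm{GL}_n(D)$, there is a finite chain
$$G=G_0\le G_1\le\cdots\le G_r=\mathrm{GL}_n(D)$$
in which each step is either normal or of finite index. Prepending the normal step $T(G)\trianglelefteq G$ gives such a chain for $T(G)$, so $T(G)$ is a periodic almost subnormal subgroup of $\mathrm{GL}_n(D)$. Under the standing hypothesis that $D$ is not a locally finite field when $n\ge 2$, Theorem~\ref{th:3.2} then shows that $T(G)$ is central.

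Conversely, suppose $T(G)$ is central. Central elements commute, and a product of two commuting elements of finite order has finite order; inverses of torsion elements are torsion. So $T(G)$ is closed under products and inverses, and it is a subgroup.

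For the supplementary claims, any periodic normal subgroup of $G$ is contained in $T(G)$. Once $T(G)$ is a normal subgroup, it is therefore the unique maximal periodic normal subgroup, so $T(G)=\mathit{Tor}(G)$. For torsion-freeness, suppose $x\,\mathit{Tor}(G)$ has finite order $m$. Then $x^m\in T(G)$, so $x^{mk}=1$ for some $k$. Thus $x\in T(G)$ and the coset is trivial.

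\textbf{Part (2).} Now let $G\le\mathrm{SL}_n(D)$. If $T(G)$ is a subgroup, part (1) shows it is central in $\mathrm{GL}_n(D)$. Hence
$$T(G)\subseteq \mathrm{SL}_n(D)\cap Z(\mathrm{GL}_n(D))=Z(\mathrm{SL}_n(D)).$$
Alternatively, one can apply Corollary~\ref{cor:3.3} directly. This uses that $T(G)$ is almost subnormal in $\mathrm{SL}_n(D)$: since $T(G)\le G\le \mathrm{SL}_n(D)$, intersecting the chain with $\mathrm{SL}_n(D)$ preserves the normal and finite-index steps. Conversely, if $T(G)\subseteq Z(\mathrm{SL}_n(D))$, then $T(G)$ is central in $\mathrm{GL}_n(D)$, and part (1) shows it is a subgroup. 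The statements about $\mathit{Tor}(G)$ and torsion-freeness follow exactly as in part (1).

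\textbf{Main point to check.} The only step needing care is the almost subnormality of $T(G)$. This is immediate once normality of $T(G)$ in $G$ is noted, so the proposition is essentially a direct corollary of Theorem~\ref{th:3.2}.
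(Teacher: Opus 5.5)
Your proposal is correct and follows essentially the same route as the paper. In both, $T(G)\trianglelefteq G$ gives almost subnormality of $T(G)$ in $\mathrm{GL}_n(D)$, Theorem~\ref{th:3.2} then gives centrality, and part (2) follows from $Z(\mathrm{SL}_n(D))=\mathrm{SL}_n(D)\cap Z(\mathrm{GL}_n(D))$. You also spell out the routine verifications that $T(G)=\mathit{Tor}(G)$ and that $G/\mathit{Tor}(G)$ is torsion-free, which the paper leaves implicit.
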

\begin{proof} (1) If $T(G)$ is a subgroup of $G,$ then clearly $T(G)$ is normal in $G.$ Consequently, $T(G)$ is almost subnormal in $\mathrm{GL}_n(D)$. In view of Theorem~\ref{th:3.2}, $T(G)$ is central. The converse is obvious.
	
(2) Assume that $G$ is a subgroup of $\mathrm{SL}_n(D).$ If $T(G)$ is a subgroup of $G,$ then $T(G)\subseteq Z(\mathrm{GL}_n(D))$ by the conclusion of (1), and therefore $$T(G)\subseteq Z(\mathrm{GL}_n(D))\cap \mathrm{SL}_n(D)=Z(\mathrm{SL}_n(D)).$$ The converse is now obvious.
\end{proof}
\begin{proposition}\label{pro:3.6} Let $D$ be a division ring, $n$ a positive integer, and $U$  an almost subnormal subgroup of the projective general skew linear group $\mathrm{PGL}_n(D).$ Suppose that one of the following two conditions holds:
	\begin{itemize}
		\item [(1)] $n=1$ and $D$ is weakly locally finite.
		\item [(2)] $n\geq 2$ and $D$ is not a locally finite field.
	\end{itemize}
	Then, $T(U)$ is a subgroup of $U$ iff $T(U)=1$. In case this, $U$ is a torsion-free group.
\end{proposition}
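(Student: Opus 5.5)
The plan is to reduce everything to Theorem~\ref{th:3.4}, by the same pattern used for Proposition~\ref{pro:3.5}. One direction is trivial: if $T(U)=1$, then $T(U)$ is a subgroup of $U$. So assume that $T(U)$ is a subgroup of $U$.

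First, $T(U)$ is normal in $U$. The set of periodic elements is closed under conjugation, since $(gxg^{-1})^m=gx^mg^{-1}$. Next we check that $T(U)$ is almost subnormal in $\mathrm{PGL}_n(D)$. Because $U$ is almost subnormal in $\mathrm{PGL}_n(D)$, there is a finite chain
$$U=U_0\le U_1\le\cdots\le U_k=\mathrm{PGL}_n(D)$$
in which each step is either normal or of finite index. Adjoining the normal step $T(U)\trianglelefteq U$ at the bottom gives such a chain starting at $T(U)$. Hence $T(U)$ is a periodic almost subnormal subgroup of $\mathrm{PGL}_n(D)$.

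Under either hypothesis (1) or (2), Theorem~\ref{th:3.4} says that $\mathrm{PGL}_n(D)$ has no non-trivial periodic almost subnormal subgroups. Therefore $T(U)=1$.

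For the final claim, if $T(U)$ is a subgroup then $T(U)=1$, so the only periodic element of $U$ is the identity. That is exactly the statement that $U$ is torsion-free.

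There is essentially no obstacle here. The only point that needs care is the bookkeeping that almost subnormality is inherited by normal subgroups, which follows directly from the definition via finite chains of normal or finite-index steps. All the substantive work is already contained in Theorem~\ref{th:3.4}.
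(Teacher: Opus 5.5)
Your proposal is correct and follows the paper's own proof: when $T(U)$ is a subgroup it is a periodic almost subnormal subgroup of $\mathrm{PGL}_n(D)$, so Theorem~\ref{th:3.4} forces $T(U)=1$, and hence $U$ is torsion-free. The paper leaves implicit the two points you spell out, namely that $T(U)$ is normal in $U$ and that almost subnormality passes to normal subgroups by adding one more step to the chain.
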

\begin{proof} If $T(U)$ is a subgroup of $U,$ then $T(U)$ is periodic almost subnormal in $\mathrm{PGL}_n(D).$ Thus, according to Theorem~\ref{th:3.4}, $T(U)=1,$ and so $U$ is torsion-free.
\end{proof} 
\begin{proposition}\label{pro:3.7} Let $D$ be a division ring, $n$ a positive integer, and $V$ an almost subnormal subgroup of the projective special skew linear group $\mathrm{PSL}_n(D).$ Assume that one of the following two conditions holds:
	\begin{itemize}
		\item [(1)] $n=1$ and $D$ is weakly locally finite.
		\item [(2)] $n\geq 2$ and $D$ is not a locally finite field.
	\end{itemize}
	Then, $T(V)$ is a subgroup of $V$ iff $T(V)=1$. In case this, $V$ is a torsion-free group.
\end{proposition}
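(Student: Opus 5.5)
The plan is to argue exactly as in Proposition~\ref{pro:3.6}, using the last assertion of Theorem~\ref{th:3.4} for $\mathrm{PSL}_n(D)$ in place of its assertion for $\mathrm{PGL}_n(D)$. The reverse implication is trivial: if $T(V)=1$, then $T(V)=\{1\}$ is a subgroup of $V$, and $V$ has no non-trivial periodic elements, so $V$ is torsion-free.

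For the forward direction, assume $T(V)$ is a subgroup of $V$. First, I observe that $T(V)$ is normal in $V$. Indeed, conjugation preserves element orders, so $T(V)$ is invariant under every inner automorphism of $V$, indeed under every automorphism. Second, $T(V)$ is periodic by definition. Third, $T(V)$ is almost subnormal in $\mathrm{PSL}_n(D)$: by hypothesis $V$ is almost subnormal in $\mathrm{PSL}_n(D)$, and $T(V)\trianglelefteq V$. Appending the step $T(V)\trianglelefteq V$ to a series witnessing almost subnormality of $V$ gives a series from $T(V)$ up to $\mathrm{PSL}_n(D)$ in which each term is normal or of finite index in the next. Finally, under either condition (1) or (2), the last statement of Theorem~\ref{th:3.4} says that $\mathrm{PSL}_n(D)$ contains no non-trivial periodic almost subnormal subgroups. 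Hence $T(V)=1$, and as above $V$ is torsion-free.

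The only point needing care is the case $n=1$ in (1). There $\mathrm{PSL}_1(D)$ should be read as $\mathrm{SL}_1(D)$ modulo its centre, which sits naturally inside $\mathrm{PGL}_1(D)=D^*/F^*$. So I must make sure the "consequently" clause of Theorem~\ref{th:3.4} really covers this case.

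If that clause is in doubt, I would argue directly via Theorem~\ref{th:3.4}(1), using the identification $\mathrm{PSL}_n(D)\cong\mathrm{PGL}_n'(D)\trianglelefteq\mathrm{PGL}_n(D)$. Since $\mathrm{PGL}_n'(D)$ is normal in $\mathrm{PGL}_n(D)$, any almost subnormal subgroup of $\mathrm{PSL}_n(D)$, viewed inside $\mathrm{PGL}_n'(D)$, is almost subnormal in $\mathrm{PGL}_n(D)$. The $\mathrm{PGL}_n(D)$ assertion of Theorem~\ref{th:3.4} then forces $T(V)=1$.

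I expect this transfer of almost subnormality through the embedding to be the only potential obstacle, and it is routine: one just lengthens the series by one normal step.
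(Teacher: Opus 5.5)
Your proposal is correct and is essentially the paper's proof: since $T(V)$ is a (normal) subgroup of $V$, it is a periodic almost subnormal subgroup of $\mathrm{PSL}_n(D)$, so the last assertion of Theorem~\ref{th:3.4} gives $T(V)=1$ and hence $V$ is torsion-free. Your fallback via $\mathrm{PSL}_n(D)\cong\mathrm{PGL}_n'(D)\trianglelefteq\mathrm{PGL}_n(D)$, which adds one normal step to the series, is a sound justification of that final clause of Theorem~\ref{th:3.4}, including the case $n=1$; the paper only sketches that clause, citing Lemma~\ref{lem:3.1}, which covers only $n\ge 2$.
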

\begin{proof} Assume that $T(V)$ is a subgroup of $V.$ Then, $T(V)$ is periodic almost subnormal in $\mathrm{PSL}_n(D).$ By Theorem~\ref{th:3.4}, $T(V)=1,$ and so $V$ is a torsion-free group.
\end{proof} 
Let $K$ be a division subring of a division ring $D,$ and $S$ a non-empty subset of the multiplicative group $D^*.$ We say that $K$ is \textit{$S$-invariant}  if $xKx^{-1}\subseteq K$ for all $x\in S.$ The Cartan-Brauer-Hua Theorem states that if $K$ is  $D^*$-invariant then either $K\subseteq F$ or $K=D.$ Clearly, $K$ is $D^*$-invariant if and only if the group $K^*$ is normal in $D^*$. In \cite{Pa_HersteinScott_1963}, Herstein and Scott proved the same result for $K$ but under a weaker assumption that $K^*$ is subnormal in $D^*$. Later, Stuth~\cite[Theorem 1]{Pa_Stuth_1964} obtained very general result by proving that if $K$ is $G$-invariant for some non-central subnormal subgroup $G$ of $D^*$, then either $K\subseteq F$ or $K=D$. Here, we make a simple additional observation that Stuth's Theorem can be extended as follows for the case when $G$ is assumed to be almost subnormal instead of subnormal in $D^*$.

\begin{lemma}\label{lem:3.8} Let $D$ be a division ring with the center $F$, $K$ is a division subring of $D$, and $G$ a non-central almost subnormal subgroup of the multiplicative group $D^*.$ If  $K$ is $G$-invariant, then either $K\subseteq F$ or $K=D.$
\end{lemma}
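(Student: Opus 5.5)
The plan is to reduce the statement to Stuth's Theorem \cite[Theorem 1]{Pa_Stuth_1964} by passing from the almost subnormal group $G$ to a non-central subnormal subgroup of $D^*$ contained in $G$. The reduction tool is \cite[Theorem 2.1]{Pa_ChNaHa_24}, already used in the proofs of Theorems \ref{th:3.2} and \ref{th:3.4}; the equivalent form \cite[Proposition 2.2]{Pa_Khanh-Hai_2022} would serve equally well.

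First, since $G$ is a non-central almost subnormal subgroup of $D^*$, \cite[Theorem 2.1]{Pa_ChNaHa_24} gives a non-central subgroup $N\le G$ that is subnormal in $D^*$. Second, the hypothesis that $K$ is $G$-invariant means $xKx^{-1}\subseteq K$ for every $x\in G$. Because $N\subseteq G$, the same inclusion holds for every $x\in N$, so $K$ is also $N$-invariant. Third, $N$ is a non-central subnormal subgroup of $D^*$ and $K$ is an $N$-invariant division subring. Stuth's Theorem \cite[Theorem 1]{Pa_Stuth_1964} therefore applies directly and yields $K\subseteq F$ or $K=D$, which is exactly the conclusion of the lemma.

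The only step that needs any care is the first: the subnormal subgroup extracted from $G$ must genuinely be non-central, since Stuth's Theorem gives nothing for central $N$. This is precisely what \cite[Theorem 2.1]{Pa_ChNaHa_24} guarantees, so no real obstacle remains. Invariance passes to subsets of $G$ trivially, and the rest is a formal application of the cited results.
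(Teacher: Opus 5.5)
Your proposal is correct and is essentially the paper's own proof. You use \cite[Theorem 2.1]{Pa_ChNaHa_24} to obtain a non-central subnormal subgroup of $D^*$ inside $G$, note that $G$-invariance of $K$ passes to this subgroup, and then apply Stuth's Theorem \cite[Theorem 1]{Pa_Stuth_1964}.
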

\begin{proof} According to~\cite[Theorem 2.1]{Pa_ChNaHa_24}, $G$ contains a non-central subnormal subgroup $H$ of $D^*$. Since $K$ is $G$-invariant, it follows that $K$ is also $H$-invariant, and by \cite[Theorem 1]{Pa_Stuth_1964}, either $K\subseteq F$ or $K=D.$
\end{proof}
\begin{lemma}\label{lem:3.9} Let $D$ be a division ring with the center $F.$ Assume that $G$ is an almost subnormal subgroup of $D^*.$ Then,
	\begin{itemize}
		\item [(1)] If $G$ is solvable, then $G$ is central.
		\item [(2)] $Z(G)=G\cap F^*=G\cap C_{D^*}(G),$ and so $C_{D}(G)=F.$
	\end{itemize}	
\end{lemma}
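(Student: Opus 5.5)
For (1), the plan is to reduce to Theorem~\ref{th:2.5}. A solvable group is generalized radical: its derived series is a finite normal series with abelian, hence locally nilpotent, factors. It is therefore locally generalized radical, since subgroups of solvable groups are solvable. So if $G$ is solvable and almost subnormal in $D^*$, Theorem~\ref{th:2.5} gives directly that $G$ is central. Alternatively, one can avoid Theorem~\ref{th:2.5}. If $G$ were non-central, \cite[Theorem 2.1]{Pa_ChNaHa_24} would supply a non-central subnormal subgroup $H\le G$ of $D^*$. This $H$ is solvable, contradicting Stuth \cite[Theorem 4]{Pa_Stuth_1964}. I would present the second route, as it is more elementary.

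For (2), I would first show $Z(G)\subseteq F$. The subgroup $Z(G)$ is characteristic, hence normal, in $G$. Appending $Z(G)\trianglelefteq G$ to a finite chain witnessing that $G$ is almost subnormal shows that $Z(G)$ is almost subnormal in $D^*$. It is abelian, so it is central by (1), or by Lemma~\ref{lemma 2.4}. This gives $Z(G)\subseteq G\cap F^*$. The reverse inclusion $G\cap F^*\subseteq Z(G)$ is trivial. We also have $G\cap F^*\subseteq G\cap C_{D^*}(G)\subseteq Z(G)$, so all three coincide.

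For $C_D(G)=F$, the inclusion $F\subseteq C_D(G)$ is clear. If $G$ is central, then $G\subseteq F$, and $C_D(G)=D$. This case is the main obstacle: the conclusion $C_D(G)=F$ would then force $D=F$, so the assertion must implicitly assume $G$ non-central. I would state that caveat and treat only the non-central case. There, $K=C_D(G)$ is a division subring of $D$, and it is $G$-invariant: for $g\in G$ and $k\in K$, $gkg^{-1}$ commutes with $G$ because $G$ normalizes itself. Lemma~\ref{lem:3.8} then gives $K\subseteq F$ or $K=D$. If $K=D$, then $G\subseteq F$, contradicting non-centrality. Hence $K\subseteq F$, and so $C_D(G)=F$.
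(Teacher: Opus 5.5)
Your proposal is correct. For (1), the route you chose to present is exactly the paper's: pass via \cite[Theorem 2.1]{Pa_ChNaHa_24} to a non-central solvable subnormal subgroup, then apply Stuth. For the equalities $Z(G)=G\cap F^*=G\cap C_{D^*}(G)$ you also follow the paper, and you add a useful check the paper leaves implicit: that $Z(G)$ is almost subnormal in $D^*$ before (1) is applied to it.

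The genuine difference is the final claim $C_D(G)=F$. The paper only writes ``and so $C_D(G)=F$'' after the identity $Z(G)=G\cap C_{D^*}(G)$. That identity holds by definition and says nothing about $C_D(G)$. As you observe, the claim is false as stated when $G$ is central and $D$ is non-commutative: for example $G=1$ or $G=F^*$, both normal in $D^*$, have $C_D(G)=D$.

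Your argument proves the claim under the needed extra hypothesis that $G$ is non-central, which is an argument the paper lacks. You note that $C_D(G)$ is a $G$-invariant division subring, so Lemma~\ref{lem:3.8} gives $C_D(G)\subseteq F$ or $C_D(G)=D$, and the second case contradicts non-centrality.

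The correction does not affect the rest of the paper. Theorems~\ref{th:3.12} and~\ref{th:3.13}, which cite this lemma, only use $Z(G)=G\cap F^*$.
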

\begin{proof} (1) Suppose that $G$ is a solvable almost subnormal subgroup of $D^*$. If $G$ is non-central, then by \cite[Theorem 2.1]{Pa_ChNaHa_24}, $G$ contains a non-central  subgroup, say $H$, which is subnormal in $D^*$. Since $H$ is also solvable, in view of \cite[Theorem 4]{Pa_Stuth_1964}, $H$ is central, and we are in a contradiction. Hence, $G$ must be central.
	
	(2) By conclusion of Part (1), $Z(G)\le F^*,$ and so $Z(G)\leq F^*\cap G\leq Z(G).$ We conclude that $Z(G)=G\cap F^*.$ Observe that $Z(G)=G\cap C_{D^*}(G)$ and $F^*\leq C_{D^*}(G).$ From the facts, it follows that $Z(G)=G\cap F^*=G\cap C_{D^*}(G),$ and so $C_{D}(G)=F.$
\end{proof}
\begin{lemma}\label{lem:3.10} Let $D$ be a division ring with center $F.$ Assume that $G$ a non-central almost subnormal subgroup of the multiplicative group $D^*$. Then, the following statements hold:
	\begin{enumerate}
		\rm\item\textit{If $a\in D\backslash F,$ there exists an element $g\in G$ such that $(a^G)^*=D.$} 
		\rm\item\textit{If $a\in D\backslash F,$ there exists an element $g\in G$ such that $[g,a]$ is not in $F.$}
	\end{enumerate}
\end{lemma}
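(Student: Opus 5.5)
We read $(a^G)^*$ in (1) as the division subring of $D$ generated by the conjugacy class $a^G=\{gag^{-1} : g\in G\}$. With that reading, (1) says this division subring is all of $D$. The existential quantifier "there exists an element $g\in G$" in (1) appears to be a typographical remnant, so the statement to prove is just this equality. The plan is to obtain (1) directly from Lemma~\ref{lem:3.8}, and then derive (2) from (1) by contradiction.

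For (1), fix $a\in D\setminus F$ and let $K$ be the division subring of $D$ generated by $a^G$. First we check that $K$ is $G$-invariant. For each $x\in G$, the inner automorphism $y\mapsto xyx^{-1}$ of $D$ maps $a^G$ onto itself, since $x(gag^{-1})x^{-1}=(xg)a(xg)^{-1}$. It therefore maps the division subring generated by $a^G$ into itself, so $xKx^{-1}\subseteq K$. Since $G$ is a non-central almost subnormal subgroup of $D^*$, Lemma~\ref{lem:3.8} gives $K\subseteq F$ or $K=D$. The first alternative is impossible because $a=1\cdot a\cdot 1^{-1}\in K$ and $a\notin F$. Hence $K=D$.

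For (2), suppose instead that $[g,a]=gag^{-1}a^{-1}\in F$ for every $g\in G$. Then each conjugate has the form $gag^{-1}=\lambda_g a$ with $\lambda_g\in F^*$. Consequently every element of $a^G$ lies in the subfield $F(a)$ of $D$, which is commutative because $a$ commutes with $F$. So the division subring generated by $a^G$ is contained in $F(a)$. By (1) this subring equals $D$, so $D=F(a)$ is commutative, i.e.\ $D=F$. This contradicts $a\notin F$, so some $g\in G$ satisfies $[g,a]\notin F$.

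The only point needing care is interpreting the notation in (1) and checking $G$-invariance of the generated division subring; everything else is a direct application of Lemma~\ref{lem:3.8}. If $(a^G)^*$ is instead meant to denote $F(a^G)$, the same argument applies verbatim, because $F$ is also fixed by conjugation.
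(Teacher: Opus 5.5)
Your proof is correct and follows essentially the same route as the paper: part (1) combines the $G$-invariance of the division subring generated by $a^G$ with Lemma~\ref{lem:3.8}. You also make explicit why the alternative $K\subseteq F$ is excluded (since $a\in K$), which the paper leaves implicit. Part (2) is the same contradiction argument, with your subfield $F(a)$ playing the role of the paper's centralizer $C_D(a)$ as a proper division subring containing $a^G$.
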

\begin{proof} $(1)$ Observe that $(a^G)^*$ is invariant under $G$. By Lemma~\ref{lem:3.8}, $D=(a^G)^*.$
	
	$(2)$ Suppose by contrary that $[g,a]\in F$ for every $g\in G.$ One has $a^g=[g,a]a$, so $a^g\in C(a).$  Hence, $C(a)$ contains $a^G$ and so $C(a)$ contains $(a^G)^*.$ By the result of $(1)$, it follows that $C(a)=D,$ from which we have $a\in F,$ a contradiction. Hence, we conclude that $[g,a]$ is not in $F.$ This completes the proof of the lemma.
\end{proof}
\begin{lemma}\label{lem:3.11} Let $D$ be a division ring with center $F.$ Assume that $a\in D$ is a non-central periodic element. Then, there exists an element $b\in D$ such that $ab\ne ba$ and the division subring $D_1=F(a,b)$ generated by $a$ and $b$ over $F$ is centrally finite. 
\end{lemma}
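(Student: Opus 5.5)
The plan is to produce $b$ from a conjugating element that carries $a$ to a \emph{different power of itself}. Then $F(a,b)$ will be finite-dimensional over a central subfield, and hence centrally finite.

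\textbf{Step 1 (Dickson).} Let $m$ be the order of $a$, so $a^m=1$, and let $f\in F[x]$ be the minimal polynomial of $a$ over $F$. Since $a\notin F$, we have $\deg f\ge 2$. The field $F(a)$ contains the cyclic group $\langle a\rangle$ of order $m$, which consists of $m$ distinct roots of $x^m-1$. Hence $x^m-1$ splits over $F(a)$ with roots $1,a,\dots,a^{m-1}$. Because $f$ divides $x^m-1$, all roots of $f$ lie in $\langle a\rangle$. Having degree $\ge 2$ and distinct roots, $f$ has a root $a^i\ne a$. By Dickson's theorem (roots in $D$ of the minimal polynomial over $F$ of an algebraic element are conjugate), there is $b\in D^*$ with
$$bab^{-1}=a^i\ne a,$$
and in particular $ab\ne ba$. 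This argument is characteristic-free; in characteristic $p$ one could instead quote Herstein's lemma.

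\textbf{Step 2 (a central element).} Conjugation by $b$ preserves $\langle a\rangle$ and induces an automorphism of this finite cyclic group, so it has finite order $r$. Thus $b^r$ commutes with $a$, and trivially with $b$ and with $F$. Hence $c:=b^r$ lies in the center of $D_1=F(a,b)$, and so does the subfield $F(c)$.

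\textbf{Step 3 (finite dimension over $F(c)$).} The elements $a$ and $c$ commute, and $a$ is algebraic over $F$. Therefore $L:=F(c)[a]$ is a commutative domain, finite-dimensional over the field $F(c)$, so it is a field of dimension at most $\deg f$ over $F(c)$. Conjugation by $b$ maps $L$ to itself, since it fixes $c$ and sends $a$ to $a^i$. Set
$$R=\sum_{k=0}^{r-1}Lb^k .$$
Using $b^r=c\in L$ and $bL=Lb$, one checks that $R$ is closed under multiplication. So $R$ is a subring of $D$ that is finite-dimensional over the central field $F(c)$. A domain finite-dimensional over a central subfield is a division ring, so $R$ is a division ring. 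It contains $F$, $a$ and $b$ and is contained in $D_1$, hence $R=D_1$. Thus $\dim_{F(c)}D_1\le r\deg f<\infty$. Since $F(c)\subseteq Z(D_1)$, $D_1$ is finite-dimensional over its own center, i.e.\ centrally finite.

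The main obstacle is Step 1: finding a conjugate of $a$ inside $F(a)$ that differs from $a$. The key observation is that periodicity forces every root of $f$ to be a power of $a$, and this is exactly what makes Dickson's theorem usable. The remaining steps are routine checks, the only point needing care being that $R$ is closed under multiplication.
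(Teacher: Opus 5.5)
Your proof is correct. Its first half is the paper's idea in a slightly different form. The paper takes a nontrivial $\varphi\in\mathrm{Gal}(F(a)/F)$ with $\varphi(a)=a^i\neq a$ and applies Skolem--Noether to get $b$ with $bab^{-1}=a^i$. You use the roots of the minimal polynomial together with Dickson's theorem, which is the same tool. Your root count also avoids the paper's incorrect aside that $[F(a):F]$ equals the order of $a$: for instance, $i\in\mathbb{H}$ has order $4$ but degree $2$ over $\mathbb{R}$.

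The second half is where you genuinely differ. The paper asserts that the $F$-span of $\{a^sb^r\}$ is a finite-dimensional subring over $F$, and concludes $[D_1:F]<\infty$. That requires $b$ to be algebraic over $F$, which nothing in the argument guarantees. The conjugator is determined only up to right multiplication by $C_D(a)$, and $C_D(a)$ may contain elements transcendental over $F$ that commute with $b$.

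You avoid this issue. You observe that $c=b^r$ lies in $Z(D_1)$ and decompose $D_1=\sum_{k<r}Lb^k$ over the central subfield $F(c)$. This gives $\dim_{F(c)}D_1\le r\deg f$, hence exactly the stated conclusion that $D_1$ is centrally finite, without ever needing $b$ algebraic over $F$. The paper's route, if its finiteness claim were justified, would give the stronger $[D_1:F]<\infty$. Yours proves what the lemma actually claims, rigorously.
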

\begin{proof} Let $D^*$ be the multiplicative group of $D$. Assume that $n$ is the order of $a$. Then, $1, a, a^2,\dots, a^{n-1}$ are distinct roots of the polynomial $t^n-1\in F[t]$. Hence, $F(a)/F$ is a Galois field extension, and so  
	$$|\text{Gal}(F(a)/F)|= [F(a):F]=n.$$
	Since $a\notin F,$ $n\geq 2$, and there exists a non-trivial element $\varphi\in \text{Gal}(F(a)/F)$. Then, $\varphi(a)=a^i$ for some $i>1.$ By the Skolem-Noether Theorem (see~\cite{Bo_Draxl_1983}), there exists $b\in D^*$ such that $bab^{-1}=a^i$. Therefore, $ba=a^ib\ne ab$. It can be proved by induction that  $$ba^s=a^{si}b \text{ for any integer } s\geq 0.\eqno(*)$$ 
	Let $A$ be the $F$-subspace of $D$ generated by the set $\{a^sb^r\mid s, r\in \mathbb{N}\}$. In view of $(*)$, one can easily verify that $A$ is a subring  which is a finite dimensional $F$-subspace of $D$. It implies that $A$ is a division subring of $D$. Clearly, $A=F(a,b)=D_1.$  Since $F\subseteq F_1=Z(D_1)\subseteq D_1$ and $[D_1:F]<\infty$, it follows that $[D_1:F_1]<\infty$.  
\end{proof}
\begin{theorem}\label{th:3.12} Let $D$ be a division ring with center $F.$ Assume that $G$ is an almost subnormal subgroup of $D^*.$ If $G$ is radical over $F$, then $T(G)$ is a subgroup of $F^*$. Moreover, $T(G)=\mathit{Tor}(G),$ and $G$ is central-by-(torsion-free).
\end{theorem}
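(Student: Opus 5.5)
The plan is to reduce everything to the claim that \emph{every periodic element of $G$ lies in $F$}, and then read off the remaining assertions formally.

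First the formal consequences. Suppose $T(G)\subseteq F^*$. Then $T(G)=T(G\cap F^*)$ is the set of periodic elements of the abelian group $G\cap F^*$, so it is a subgroup. It is normal in $G$ (indeed central), periodic, and contains every periodic normal subgroup, so $T(G)=\mathit{Tor}(G)$. For torsion-freeness of $G/\mathit{Tor}(G)$: if $g^m\in T(G)$, then $g^{mk}=1$ for some $k$, so $g\in T(G)$. Since $T(G)$ is central, $G$ is central-by-(torsion-free).

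For the main claim, suppose towards a contradiction that $a\in G$ is periodic and $a\notin F$.
\begin{enumerate}
\item By Lemma~\ref{lem:3.11} there is $b\in D$ with $ab\neq ba$ such that $D_1=F(a,b)$ is centrally finite over $F_1=Z(D_1)$. Note that $F\subseteq F_1$.
\item Set $G_1=G\cap D_1^*$. Intersecting an almost subnormal series from $G$ to $D^*$ termwise with $D_1^*$ keeps each step either normal or of finite index. Hence $G_1$ is almost subnormal in $D_1^*$, and $a\in G_1$.
\item Each $x\in G_1$ satisfies $x^{m}\in F\subseteq F_1$ for some $m$, so $G_1$ is radical over $F_1$. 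In particular $G_1F_1^*/F_1^*$ is a periodic subgroup of $D_1^*/F_1^*$.
\item Let $d^2=[D_1:F_1]$ and let $K$ be a splitting field for $D_1$. Then $D_1^*/F_1^*$ embeds in $\mathrm{PGL}_d(K)$, and the preimage in $\mathrm{GL}_d(K)$ of a finitely generated periodic subgroup is finitely generated and periodic modulo scalars. By Schur's theorem (the projective version, via the standard argument) $G_1F_1^*/F_1^*$ is locally finite.
\item Hence $G_1$ is abelian-by-(locally finite), namely $G_1\cap F_1^*$ by a locally finite group. By Lemma~\ref{lem:2.1}, or directly from the definition, $G_1$ is generalized radical, and in particular locally generalized radical.
\item Theorem~\ref{th:2.5}, applied in $D_1$, now gives $G_1\subseteq F_1$. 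So $a$ commutes with $b$, contradicting the choice of $b$.
\end{enumerate}

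The main obstacle is the local finiteness in step~(4). Schur's theorem is stated for periodic linear groups, while here we only know periodicity modulo the center. I would first try the standard route: a finitely generated subgroup of $\mathrm{GL}_d(K)$ that is periodic modulo scalars has bounded-order images in the projective group. After passing to $\mathrm{SL}_d$ via $d$-th roots of determinants over an algebraic extension, one reduces to a periodic group and applies Burnside–Schur.

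If that becomes delicate, the fallback is to avoid local finiteness altogether. One notes that $G_1$ is radical over the center $F_1$ of a centrally finite division ring, and invokes the known case of Herstein's conjecture for centrally finite (hence weakly locally finite) division rings, as in \cite[Theorem 4.11]{Pa_ChuaHai_2026}, to conclude $G_1\subseteq F_1$ directly.
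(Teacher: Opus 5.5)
Your proof is correct. It follows the paper up to the point where $G_1=G\cap D_1^*$ is a non-central almost subnormal subgroup of the centrally finite $D_1^*$ that is radical over $F_1$; from there the two routes diverge.

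\emph{The paper's route.} It stays in the periodic setting. Herstein's \cite[Sublemma]{Pa_Herstein_1978}, a reduced-norm argument, shows that if an element of $G_1'$ has a power in $F_1$, then that power is a root of unity. Hence $G_1'$ is a periodic almost subnormal subgroup of $D_1^*$. Theorem~\ref{th:3.2}, which ultimately rests on Herstein's theorem on periodic subnormal subgroups, puts $G_1'$ inside $F_1$. Lemma~\ref{lem:3.10} then gives $g_1\in G_1$ with $[g_1,a]\notin F_1$, a contradiction.

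\emph{Your route.} You pass to the quotient by the center. Schur's theorem makes $G_1/(G_1\cap F_1^*)$ locally finite, so $G_1$ is abelian-by-(locally finite), and a centrality theorem for such almost subnormal subgroups finishes the argument. This avoids reduced norms entirely. The cost is heavier group-theoretic input: Theorem~\ref{th:2.5} rests on the free-subgroup results of Section~\ref{generalized}. You do not need that much, since \cite[Theorem 2.6]{Pa_ChNaHa_24} on (locally solvable)-by-(locally finite) almost subnormal subgroups already suffices. The paper's route needs only the periodic case together with the Sublemma.

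Two smaller points.
\begin{itemize}
\item The ``main obstacle'' in step~(4) is not really one. Conjugation is an $F_1$-linear action of $D_1^*$ on $D_1$ whose kernel is exactly $F_1^*$. So $D_1^*/F_1^*$ embeds in $\mathrm{GL}_{d^2}(F_1)$, and Schur's theorem applies directly to the periodic group $G_1F_1^*/F_1^*$, with no splitting field or determinant roots needed. Your determinant-rescaling argument also works. However, you should take the subgroup generated by lifts of the generators, not the full preimage, which contains all scalars and is not finitely generated.
\item Your fallback is mis-cited. \cite[Theorem 4.11]{Pa_ChuaHai_2026} concerns locally generalized radical almost subnormal subgroups of weakly locally finite division rings, not subgroups radical over the center. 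Invoking it would therefore still require your step~(4). Since step~(4) goes through, the fallback is unnecessary anyway.
\end{itemize}
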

\begin{proof} In view of Proposition~\ref{pro:3.5}, it suffices to show that $T(G)$ is contained in $F.$ Suppose by contrary that there is an element $a\in T(G)\backslash F$. By Lemma~\ref{lem:3.11}, there exists a division subring $D_1$ of $D$ such that $a\in D_1\backslash F_1$ and $[D_1:F_1]$ is finite, where $F_1$ is the center of $D_1$. Observe that $G_1=G\cap D_1^*$ is a non-central almost subnormal subgroup of $D_1^*$ containing $a.$ Let $G_2=[G_1,G_1]$ be the commutator subgroup of $G_1.$ For any element $d\in G_2,$ there is an integer $n(d)$ such that  $d^{n(d)}=\alpha\in F\subseteq F_1.$ By~\cite[Sublemma]{Pa_Herstein_1978}, $\alpha$ is a root of unity, that is, $\alpha^m=1$ for some $m\geq 1$, and so, the order of $d$ is finite. Therefore, $G_2$ is a periodic almost subnormal subgroup of $D_1.$ In view of Theorem~\ref{th:3.2}, $G_2\subseteq F_1$. By Lemma~\ref{lem:3.10}, there exists an element $g_1\in G_1$ such that $[g_1,a]$ is not in $F_1.$ On the other hand, $[g_1,a]\in [G_1,G_1]= G_2\subseteq F_1$, a contradiction. We conclude that $T(G)\leq F^*.$ It is obvious that $T(G)=\mathit{Tor}(G),$ and $T(G)\subseteq F^*\cap G=Z(G)$ by Lemma~\ref{lem:3.9}. Thus, $G$ is central-by-(torsion-free). 
\end{proof}
\begin{theorem}\label{th:3.13} Let $D$ be a division ring, and $G$  an almost subnormal subgroup of the multiplicative group $D^*.$ If $G$ is (locally generalized radical)-by-periodic, then $G$ is central-by-(torsion-free).
\end{theorem}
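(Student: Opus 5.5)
Let $N$ be a locally generalized radical normal subgroup of $G$ with $G/N$ periodic. The plan is to reduce to Theorem~\ref{th:3.12} by showing that $G$ is radical over the center $F$ of $D$.

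First, $N$ is normal in $G$ and $G$ is almost subnormal in $D^*$, so $N$ is almost subnormal in $D^*$. By Theorem~\ref{th:2.5}, $N$ is central, so $N\le F^*$.

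Next, take any $x\in G$. Since $G/N$ is periodic, $x^m\in N\subseteq F$ for some $m\ge 1$. Hence every element of $G$ is radical over $F$. Applying Theorem~\ref{th:3.12} to the almost subnormal subgroup $G$ then gives three facts: $T(G)\le F^*$, $T(G)=\mathit{Tor}(G)$, and $G$ is central-by-(torsion-free). Concretely, $T(G)\subseteq G\cap F^*=Z(G)$ by Lemma~\ref{lem:3.9}, and $G/T(G)$ is torsion-free by Proposition~\ref{pro:3.5}.

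The only point needing care is what ``central-by-(torsion-free)'' is meant to assert. If it requires a central normal subgroup $C$ with $G/C$ torsion-free, then $C=T(G)=\mathit{Tor}(G)$ works, since it is contained in $Z(G)$. I do not expect a real obstacle: the substantive work is already done in Theorem~\ref{th:2.5} and Theorem~\ref{th:3.12}, and the rest is the observation that a central-by-periodic subgroup is radical over $F$.
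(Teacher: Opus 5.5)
Your proposal is correct and follows the paper's proof essentially step for step: Theorem~\ref{th:2.5} puts the locally generalized radical normal subgroup $N$ inside $F^*$, periodicity of $G/N$ then makes $G$ radical over $F$, and Theorem~\ref{th:3.12} finishes. The only cosmetic difference is that the paper passes through $N\le Z(G)$ (via Lemma~\ref{lem:3.9}) and the periodicity of $G/Z(G)$, whereas you observe $x^m\in N\subseteq F$ directly.
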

\begin{proof} Assume that $G$ is (locally generalized radical)-by-periodic. Then, $G$ contains a locally generalized radical normal subgroup $N$ of $G$ such that the quotient group $G/N$ is periodic. Observe that $N$ is also an almost subnormal subgroup of $D^*.$ Hence, in view of Theorem~\ref{th:2.5}, $N\leq F^*,$ and so $N\leq F^*\cap G=Z(G)$ by Lemma~\ref{lem:3.9}. Since $G/N$ is periodic, so is the central factor group $G/Z(G).$ Now, for all $a\in G,$ one has $aZ(G)\in G/Z(G)$ is periodic, there exists a positive integer $k$ such that $a^k\in Z(G)=F^*\cap G\leq F^*$ by Lemma~\ref{lem:3.9}. This shows that $G$ is radical over $F.$ Thus, according to Theorem~\ref{th:3.12}, $G$ is central-by-(torsion-free).
\end{proof}
\begin{theorem}\label{th:3.14} Let $D$ be a division ring, and $G$ an almost subnormal subgroup of the multiplicative group $D^*.$ If $G$ is a periodic-by-(locally generalized radical) almost subnormal in $D^*$, then $G$ is central.
\end{theorem}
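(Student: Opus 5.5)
We argue by contradiction, combining Theorem~\ref{th:3.2} (periodic almost subnormal subgroups of $D^*$ are central) with Theorem~\ref{th:2.5} (locally generalized radical almost subnormal subgroups of $D^*$ are central). Assume $G$ is periodic-by-(locally generalized radical). Then $G$ has a periodic normal subgroup $P$ such that $G/P$ is locally generalized radical.

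First, $P$ is normal in $G$, and $G$ is almost subnormal in $D^*$, so $P$ is an almost subnormal subgroup of $D^*$. Since $P$ is periodic, Theorem~\ref{th:3.2} with $n=1$ gives $P\le F^*$. Hence $P\le G\cap F^*=Z(G)$ by Lemma~\ref{lem:3.9}. In particular $P$ is abelian and periodic, so it is locally finite.

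Next, I will show that $G$ itself is locally generalized radical. Let $H$ be a finitely generated subgroup of $G$. Its image $HP/P\cong H/(H\cap P)$ is a finitely generated subgroup of the locally generalized radical group $G/P$, so it is generalized radical. The subgroup $H\cap P$ is central in $H$ and periodic abelian, hence locally finite. Now take an ascending normal series of $H/(H\cap P)$ whose factors are locally finite or locally nilpotent. Pull it back to $H$ and put the term $H\cap P$ at the bottom. The result is an ascending normal series of $H$ whose factors are again locally finite or locally nilpotent, since the first factor $H\cap P$ is locally finite. So $H$ is generalized radical. (Alternatively, this is the standard extension-closure of the class of generalized radical groups, as in~\cite[Lemma 2.1]{Pa_ChuaHai_2026} and the references there.) Therefore $G$ is locally generalized radical, and Theorem~\ref{th:2.5} shows that $G$ is central.

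The step I expect to need the most care is this extension-closure for local properties. One has to check that generalized radicality of $H/(H\cap P)$ lifts to $H$, and that the ascending normal series of the quotient pulls back to a series of normal subgroups of $H$. This is routine, because preimages of normal subgroups are normal and unions at limit ordinals are preserved. Nothing beyond the results already cited in the paper is needed.
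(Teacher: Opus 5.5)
Your proof is correct and follows the paper's argument. Theorem~\ref{th:3.2} puts the periodic normal subgroup into $F^*$, hence into $Z(G)$; then $G$ is locally generalized radical, and Theorem~\ref{th:2.5} finishes. The only difference is that you spell out the lifting step directly, using that $H\cap P$ is locally finite, whereas the paper passes to $G/Z(G)$ and calls that step ``evident''.
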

\begin{proof} Assume that $G$ is periodic-by-(locally generalized radical). Then, $G$ contains a periodic normal subgroup $N$ of $G$ such that $G/N$ is locally generalized radical. Observe that $N$ is also an almost subnormal subgroup of the group $D^*.$ Hence, according to Theorem~\ref{th:3.2}, $N\subseteq F^*,$ and so $N\subseteq F^*\cap G=Z(G).$ On the other hand, since $G/N$ is locally generalized radical, so is $G/Z(G).$ It is evident that $G$ is locally generalized radical. Thus, in view of Theorem~\ref{th:2.5}, $G$ is central.
\end{proof}
\begin{theorem}\label{th:3.15} Let $D$ be a division ring whose center $F$ is uncountable. Assume that $G$ is an almost subnormal subgroup of the multiplicalive group $D^*.$ If $G$ is radical over $F,$ then $G$ is central.
\end{theorem}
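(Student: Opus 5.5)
We argue by contradiction: assume $G$ is non-central. By \cite[Theorem 2.1]{Pa_ChNaHa_24}, $G$ contains a non-central subnormal subgroup $H$ of $D^*$. Since $H\le G$, $H$ is also radical over $F$. So it suffices to treat the subnormal case with $F$ uncountable.

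The key tool is the uncountability of $F$, used via a counting argument on an element $x\in H\setminus F$. For each $\lambda\in F$ with $x+\lambda\neq 0$, the element $x+\lambda$ lies in $D^*$. I would like to conjugate $x$ by such elements and use the radicality of $H$. Subnormality gives a chain $H=H_r\trianglelefteq H_{r-1}\trianglelefteq\cdots\trianglelefteq H_0=D^*$. For $r=1$, the element $(x+\lambda)x(x+\lambda)^{-1}$ equals $x$, so plain conjugation gives nothing. The right device is the commutator $[h,x+\lambda]$ with $h\in H$ chosen so that $hx\neq xh$. Such an $h$ exists by Lemma~\ref{lem:3.10}(2), since $H$ is non-central almost subnormal. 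Iterating the commutator $r$ times keeps the element inside $H$, and each such element has some power $n(\lambda)$ in $F$.

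Next comes the pigeonhole step. There are uncountably many $\lambda$ but only countably many exponents, so a single $n$ works for uncountably many, in particular infinitely many, values of $\lambda$. The relations $(\text{expression in }x+\lambda,h)^n\in F$ for infinitely many $\lambda$ should then force a polynomial identity. Concretely, I would work in the division subring $F(x,h)$, or better in its finite-dimensional part, and use a Vandermonde/Lagrange interpolation argument in $\lambda$. The expected conclusion is that the elements of the form $((x+\lambda)h(x+\lambda)^{-1}h^{-1})^n$ commute with everything in a suitable sense. This would yield that $D$ (or the relevant subring) satisfies a generalized rational identity.

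From there the conclusion is that $D$ is centrally finite, or at least that some non-central subring is. In the centrally finite case we can use the reduced norm: $[D:F]=m^2$, and radicality gives $\mathrm{Nrd}$ constraints. Alternatively, we can invoke the known finite-dimensional case of Herstein's conjecture. With the group radical modulo $F$, Theorem~\ref{th:3.12} shows $G$ is central-by-(torsion-free). In the finite-dimensional case, one then obtains that $G'$ is periodic modulo roots of unity, as in the proof of Theorem~\ref{th:3.12} using \cite[Sublemma]{Pa_Herstein_1978}. Applying Theorem~\ref{th:3.2} gives $G'\subseteq F$, so $G$ is solvable and hence central by Lemma~\ref{lem:3.9}, a contradiction.

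The main obstacle is the middle step: extracting a genuine identity from uncountably many $\lambda$ with a common exponent. Handling the inverse $(x+\lambda)^{-1}$ requires clearing denominators to get a polynomial in $\lambda$ of bounded degree. Infinitely many roots then force it to vanish identically, and this is where uncountability, rather than mere infiniteness, of $F$ is indispensable.
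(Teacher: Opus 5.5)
Your opening reduction is the same as the paper's: \cite[Theorem 2.1]{Pa_ChNaHa_24} gives a non-central subnormal subgroup $H\le G$ of $D^*$, still radical over $F$. The paper then finishes immediately by quoting Herstein's theorem \cite[Theorem 2]{Pa_He_80}, which is exactly the subnormal case over an uncountable center. You instead try to reprove that theorem, and the decisive step is missing.

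Pigeonhole plus interpolation in $\lambda$ yields the following. Let $u_\lambda\in H$ be your iterated commutator of $h$ with $x+\lambda$, and suppose $u_\lambda^n\in F$ for infinitely many $\lambda$. Then, with $t$ a central indeterminate, $u_t^n$ commutes with every $d\in D$ inside $D(t)$, i.e.\ $u_t^n\in F(t)$. This is a relation between the two fixed elements $x$ and $h$. It is not a generalized rational identity of $D$: the only variable ranges over the center, and the exponent $n$ depends on $x$ and $h$, so you cannot let $x$ vary over $D$ to obtain a single identity. Hence Amitsur-type results give nothing here. You also provide no mechanism making $D$, or a non-central subring such as $F(x,h)$, centrally finite. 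In Lemma~\ref{lem:3.11} that came from the periodicity of $a$, which you do not have. Note too that uncountability enters only through the pigeonhole step; the interpolation needs only infinitely many $\lambda$.

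Consequently the passage to the centrally finite case is unjustified. The centrally finite case itself you handle correctly: reduced norms via Herstein's Sublemma make $G'$ periodic, Theorem~\ref{th:3.2} puts $G'$ in $F$, and Lemma~\ref{lem:3.9}(1) finishes. Extracting information from $u_t^n\in F(t)$ is genuinely delicate. Expanding in $1/t$ gives only $n\,\tau^r(x)\in F$, where $\tau(x)=x-h^{-1}xh$. In characteristic $0$ this can be pushed through: $\tau^{r+1}=0$ makes conjugation by $h$ unipotent, hence trivial. But when $\mathrm{char}\,D=p$ divides $n$, the leading term vanishes and your sketch has nothing to replace it. The simplest repair is to apply \cite[Theorem 2]{Pa_He_80} to $H$ right after your first step, which is what the paper does.
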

\begin{proof} Suppose by contrary that $G$ is non-central. According to~\cite[Theorem 2.1]{Pa_ChNaHa_24}, $G$ contains a normal subgroup $N$ of finite index such that $N$ is a non-central subnormal subgroup of the multiplicative group $D^*.$ Since $G$ is radical over $F,$ it follows that the subgroup $N$ is also radical over $F.$ Hence, in view of~\cite[Theorem 2]{Pa_He_80}, $N$ is central, a contradiction. Consequently, $G$ must be central. 
\end{proof}
\section{Periodic ascendant skew linear subgroups}\label{S4}

In this section, we continue to study the structure of periodic skew linear groups we have began in the previous section but in more general circumstance. Namely, we consider periodic ascendant subgroups of the multiplicative group $D^*$ of a division ring $D$. Firstly, we give the affirmative answer to \cite[Question 1]{Pa_ChuaHai_2026} by Theorem~ \ref{th:4.1}. This theorem will be used to prove that every periodic-by-(generalized radical) ascendant subgroup of $D^*$ must be central (see Theorem~\ref{th:4.7}). Further, we prove that every periodic locally generalized radical ascendant subgroup of the group $\mathrm{GL}_n(D)$ is central provided that $D$ is not a locally finite field in case $n\ge 2$ (see Theorem~\ref{th:4.9}). Also, we give the positive answer to Conjecture \ref{conj:2} for a weakly locally finite division ring $D$ (see Theorem \ref{th:4.4}); for a division ring $D$ whose center $F$ is a locally finite field (see Theorem \ref{th:4.5}). Finally, we prove  that if an ascendant subgroup $G$ of $D^*$ is radical over $F$, then every periodic element of $G$ must be contained in $F$ (see Theorem~\ref{th:4.3}). This theorem gives some useful information to verify Conjecture~\ref{conj:2}. 

\begin{theorem}\label{th:4.1} Let $D$ be a division ring, and  $G$ an ascendant subgroup of the multiplicative group $D^*$. If $G$ is periodic, then $G$ is central.
\end{theorem}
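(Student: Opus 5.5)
The plan is to reduce to the centrally finite case via Lemma~\ref{lem:3.11}. There, linearity over a field lets us replace ``ascendant'' by ``normal'', and Herstein's theorem \cite[Theorem~8]{Pa_Herstein_1978} (or Theorem~\ref{th:3.2}) finishes the argument.

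Suppose, to the contrary, that $G$ is non-central, and pick $a\in G\setminus F$. Since $a$ is periodic, Lemma~\ref{lem:3.11} gives $b\in D$ with $ab\ne ba$ such that $D_1=F(a,b)$ is centrally finite over its center $F_1$. Because $b\in D_1$ and $ab\neq ba$, we have $a\notin F_1$.

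Let
$$G=G_0\trianglelefteq G_1\trianglelefteq\cdots\trianglelefteq G_\alpha\trianglelefteq\cdots\trianglelefteq G_\gamma=D^*$$
be an ascending series. Intersecting it with $D_1^*$ gives an ascending series
$$G\cap D_1^*=G_0\cap D_1^*\trianglelefteq\cdots\trianglelefteq G_\gamma\cap D_1^*=D_1^*.$$
Normality is preserved under intersection with $D_1^*$, and unions at limit ordinals are also preserved. Hence $H:=G\cap D_1^*$ is a periodic ascendant subgroup of $D_1^*$ containing $a$.

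Since $[D_1:F_1]<\infty$, the group $D_1^*$ embeds in $\mathrm{GL}_m(K)$ for a splitting field $K$ of $D_1$. Thus $H$ is a periodic linear group over a field, and by Schur's theorem it is locally finite.

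The key step is to pass from the ascendant locally finite subgroup $H$ to a normal one. I will invoke the linear-group fact (Wehrfritz) that in a linear group over a field, every locally finite ascendant subgroup lies in the locally finite radical, that is, the unique maximal locally finite normal subgroup. Concretely, the plan is to show by transfinite induction along the series that $H$ is contained in a locally finite normal subgroup $L_\alpha$ of $G_\alpha\cap D_1^*$. The successor steps take normal closures: the normal closure of $L_\alpha$ in $G_{\alpha+1}\cap D_1^*$ is generated by locally finite normal subgroups of $G_\alpha\cap D_1^*$, hence is again locally finite. The limit steps take unions. The finiteness of the linear degree is what prevents this from breaking down.

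Granting this, $H$ lies in a locally finite, hence periodic, normal subgroup $L$ of $D_1^*$. Herstein's theorem \cite[Theorem~8]{Pa_Herstein_1978} (or Theorem~\ref{th:3.2} with $n=1$) then forces $L\subseteq F_1$, so $a\in F_1$, contradicting $a\notin F_1$. Therefore $G\subseteq F$.

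The main obstacle is the normalization step in the previous paragraph, namely showing that the join of locally finite ascendant subgroups in a skew linear group of finite degree stays locally finite. The local reduction through Lemma~\ref{lem:3.11} exists precisely so that this can be done inside a genuine linear group over a field, where the required closure properties hold. If citing this fact directly is not acceptable, I would prove it via the Zariski closure: the identity component of the Zariski closure of $H$ is normalized along the series, which should give the same containment.
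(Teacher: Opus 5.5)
Your proof is correct and follows essentially the paper's route. You reduce via Lemma~\ref{lem:3.11} to the centrally finite $D_1$, obtain local finiteness from linearity (you use Schur's theorem where the paper uses Tits' alternative plus local finiteness of periodic solvable groups), pass to a locally finite normal subgroup, and finish with Herstein's theorem. The one correction concerns your ``main obstacle'': the normalization step needs no linearity (the paper simply cites \cite[Proposition 1.2.15]{Bo_Dixon_2017}), because your own transfinite induction works in any group, since joins of normal locally finite subgroups and unions of chains of locally finite groups are locally finite. Linearity is needed only to get local finiteness from periodicity, and the Zariski-closure fallback is unnecessary.
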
 
\begin{proof}
	Suppose that $G$ is a periodic ascendant subgroup of the multiplicative group $D^*$. We have to prove that $G\le F^*$. There are two the following cases to examine.
	
	\medskip 
	
	\textit{Case 1. $D$ is centrally finite:}
	
	\medskip
	Since $D$ is centrally finite, the multiplicative group $D^*$ of the division ring $D$ can be considered as a subgroup of the general linear group $\mathrm{GL}_n(F),$ where $F$ is the center of $D$ and $[D:F]=n.$ By Tits' Alternative~\cite{Pa_Tits_1972}, either $G$ is solvable-by-(locally finite) or $G$ contains a non-cyclic free subgroup. The second case cannot occur because $G$ is a periodic group. Therefore, $G$ is a solvable-by-(locally finite) group. Then, $G$ contains a normal solvable subgroup, say $N$, such that $G/N$ is locally finite. Being a subgroup of a periodic group, $N$ is a periodic solvable group. Then, according to~\cite[1.3.5]{Bo_Lennox_2004}, $N$ is a locally finite group. Being an extension of a locally finite group by a locally finite group, $G$ is a locally finite group. Hence, we conclude that $G$ is a locally finite ascendant subgroup of the multiplicative group $D^*.$ Then, according to~\cite[Proposition 1.2.15]{Bo_Dixon_2017}, the normal closure $G^{D^*}$ of $G$ in $D^*$ is a locally finite normal subgroup of $D^*.$ By Theorem~\ref{th:3.2}, it follows that $G^{D^*}$ is central, and so the group $G$ is central, too.
	
	\medskip
	
	\textit{Case 2. $D$ is not centrally finite:}
	
	\medskip
	
	Suppose by contrary that $G$ is non-central, and assume that $a\in G\backslash F$. In view of Lemma~\ref{lem:3.11}, there exists an element $b\in D$ such that $ab\ne ba$ and $D_1=F(a,b)$ is a centrally finite division ring. Let $F_1$ be the center of $D_1$. Observe that $G_1=G\cap D_1$ is a periodic ascendant subgroup of $D_1^*$, and so by \textit{Case 1}, we have $G_1\le F_1^*$. This implies that $ab=ba$, a contradiction. Thus, we conclude that $G$ must be central.
	
	The proof of the theorem is now complete. 
\end{proof} 

\begin{theorem}\label{th4.2} Let $D$ be a division ring, and $G$ an ascendant subgroup of the multiplicative group $D^*$. Then, $T(G)$ is a subgroup of $G$ if and only if $T(G)$ is central. In case this, $T(G)=\mathit{Tor}(G)$ and the group $G/\mathit{Tor}(G)$ is torsion-free.
\end{theorem}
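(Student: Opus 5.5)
The plan is to mirror the proof of Proposition~\ref{pro:3.5}(1), using Theorem~\ref{th:4.1} in place of Theorem~\ref{th:3.2}.

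\emph{Forward direction.} Suppose $T(G)$ is a subgroup of $G$. It is invariant under every automorphism of $G$, since automorphisms preserve element orders. In particular it is normal in $G$. We then need $T(G)$ to be ascendant in $D^*$. Let $G=G_\gamma\trianglelefteq\cdots$ be an ascending series $G=G_0\trianglelefteq G_1\trianglelefteq\cdots\trianglelefteq G_\alpha\trianglelefteq\cdots\trianglelefteq G_\lambda=D^*$. Placing $T(G)\trianglelefteq G$ in front of it gives an ascending series from $T(G)$ to $D^*$, of length $1+\lambda$. So $T(G)$ is a periodic ascendant subgroup of $D^*$, and Theorem~\ref{th:4.1} gives $T(G)\le F^*$, that is, $T(G)$ is central. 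The only point needing care is this concatenation of the normal step with an ascending series. It follows directly from the definition of ascendant, and I do not expect any real obstacle.

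\emph{Converse.} Suppose $T(G)\subseteq F^*$. Then the periodic elements of $G$ commute pairwise. The product of two commuting periodic elements is periodic, and inverses of periodic elements are periodic. Hence $T(G)$ is a subgroup, indeed an abelian one.

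\emph{Final assertions.} Assume $T(G)$ is a subgroup. It is a normal periodic subgroup, so $T(G)\le \mathit{Tor}(G)$. Conversely, every element of $\mathit{Tor}(G)$ is periodic, so $\mathit{Tor}(G)\subseteq T(G)$, and therefore $T(G)=\mathit{Tor}(G)$. For torsion-freeness, let $x\in G$ with $x^k\in \mathit{Tor}(G)$ for some $k\ge1$. Then $x^k$ has finite order $m$, so $x^{km}=1$. Thus $x\in T(G)=\mathit{Tor}(G)$, and $G/\mathit{Tor}(G)$ is torsion-free.
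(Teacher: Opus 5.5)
Your proof is correct and follows the paper's argument. Normality of $T(G)$ in $G$ makes it a periodic ascendant subgroup of $D^*$, so Theorem~\ref{th:4.1} forces it to be central, and the converse is immediate; you also write out the concatenation of series and the check that $T(G)=\mathit{Tor}(G)$ with $G/\mathit{Tor}(G)$ torsion-free, which the paper leaves implicit.
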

\begin{proof}
If $T(G)$ is a subgroup of $G,$ then clearly $T(G)$ is normal in $G$, and so $T(G)$ is ascendant in $D^*$. By Theorem~\ref{th:4.1}, $T(G)$ is central. The converse is obvious.
\end{proof}

Let $G$ be a subnormal subgroup of the multiplicative group $D^*$ of a division ring $D$. In \cite[Theorem 9]{Pa_Herstein_1978}, Herstein proved that if $G$ is radical over the center $F$ of $D$ then every periodic element of $G$ must be contained in $F$. The next theorem shows that this property remains true in case $G$ is any ascendant subgroup of $D^*$.

\begin{theorem}\label{th:4.3} Let $D$ be a division ring with the center $F.$ Assume that $G$ is an ascendant subgroup of the multiplicative group $D^*$ of $D$. If $G$ is radical over $F$, then every periodic element of $G$ must be in $F$. Consequently, $T(G)=Tor(G)$ is a normal abelian subgroup of $G$ and $G/Tor(G)$ is a torsion-free group.
\end{theorem}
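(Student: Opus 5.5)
We will argue exactly as in the proof of Theorem~\ref{th:3.12}, replacing the almost subnormal machinery by the ascendant results of this section (chiefly Theorem~\ref{th:4.1}).

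Suppose, to the contrary, that some periodic element $a\in G$ is not in $F$. By Lemma~\ref{lem:3.11} there is $b\in D^*$ with $ab\ne ba$ such that $D_1=F(a,b)$ is centrally finite. Let $F_1=Z(D_1)$ and note $F\subseteq F_1$. Put $G_1=G\cap D_1^*$. Intersecting an ascending series from $G$ to $D^*$ with $D_1^*$ gives an ascending series from $G_1$ to $D_1^*$, so $G_1$ is ascendant in $D_1^*$. Moreover $G_1$ is radical over $F$, hence over $F_1$.

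Next let $G_2=[G_1,G_1]$, which is again ascendant in $D_1^*$. Take $d\in G_2$ with $d^{n(d)}=\alpha\in F$. Herstein's Sublemma \cite[Sublemma]{Pa_Herstein_1978} applies in the centrally finite division ring $D_1$: it says that a commutator-subgroup element with a power in the center forces that power to be a root of unity. Hence $\alpha$ is a root of unity, $d$ has finite order, and $G_2$ is periodic. By Theorem~\ref{th:4.1}, $G_2\le F_1^*$.

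It remains to contradict this. The reduced-norm argument behind the Sublemma requires $d\in G_2$ to have reduced norm $1$, and the Sublemma should apply verbatim since it only uses $d\in[D_1^*,D_1^*]$. Once $G_2$ is central in $D_1$, we want an element $g\in G_1$ with $[g,a]\notin F_1$. Lemma~\ref{lem:3.10} requires almost subnormality, so it is not available here. Instead I plan to use the explicit $b$: $ba b^{-1}=a^i$ with $a^i\ne a$, so $[b,a]\notin F_1$ would suffice if $b$ could be taken in $G_1$. But $b$ need not lie in $G$.

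\textbf{Main obstacle.} This is the one genuinely new step. The plan is as follows.

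First, since $G_2\le F_1$, the group $G_1$ is nilpotent of class at most $2$ modulo $F_1^*$. Hence $G_1F_1^*$ is a solvable (indeed nilpotent) ascendant subgroup of $D_1^*$.

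Second, in the centrally finite $D_1$, view $D_1^*\le \mathrm{GL}_m(F_1)$. There a solvable ascendant subgroup has solvable normal closure, since the Hirsch–Plotkin/Fitting-type radicals of linear groups are normal and contain every ascendant locally nilpotent subgroup. So $G_1F_1^*$ lies in a solvable normal subgroup of $D_1^*$.

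Third, that normal subgroup is central by Scott's theorem (Lemma~\ref{lem:3.9}(1)). Hence $a\in F_1$, contradicting $ab\ne ba$.

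Conclusion. Therefore every periodic element of $G$ lies in $F$. Consequently $T(G)\subseteq F^*\cap G$ is a subgroup, central in $G$, hence abelian and normal. Theorem~\ref{th4.2} then gives $T(G)=Tor(G)$ and that $G/Tor(G)$ is torsion-free.
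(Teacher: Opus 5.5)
Your argument is correct. Up to $G_2=[G_1,G_1]\le F_1^*$ it is the paper's proof: Lemma~\ref{lem:3.11}, $G_1=G\cap D_1^*$ ascendant and radical over $F_1$, Herstein's Sublemma, then Theorem~\ref{th:4.1}. The two routes differ only in the final step.

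The paper observes that $G_1$ is now a solvable ascendant subgroup of $D_1^*$. It then quotes \cite[Theorem 3.8]{Pa_ChuaHai_2026}, that generalized radical ascendant subgroups are central, to get $G_1\le F_1^*$. Hence $a\in F_1=Z(D_1)$ commutes with $b\in D_1$, a contradiction.

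You instead use the sharper fact that $[G_1,G_1]\le F_1^*\cap G_1\le Z(G_1)$, so $G_1$ (and $G_1F_1^*$) is nilpotent of class at most $2$. By Plotkin's theorem, valid in any group, it lies in the Hirsch--Plotkin radical of $D_1^*$. That radical is a locally nilpotent normal subgroup, hence solvable since $D_1^*\le \mathrm{GL}_m(F_1)$ (Zassenhaus; cf.\ Theorem~\ref{th:6.2}), hence central by Scott's theorem. Your version replaces the external theorem on generalized radical ascendant subgroups with classical facts, at the cost of a few extra lines. The paper's version is shorter given its earlier work.

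Two small corrections. First, your sentence that ``a solvable ascendant subgroup has solvable normal closure'' claims more than the Hirsch--Plotkin argument proves. State it for locally nilpotent ascendant subgroups, which is all you use. Second, the detour through Lemma~\ref{lem:3.10} and the worry that $b\notin G$ are unnecessary. The contradiction only needs $a\in Z(D_1)$, because $b$ already lies in $D_1$; this is exactly how both you and the paper finish.
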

\begin{proof} Suppose that $a\in G$ is a periodic element. If $a\notin F$, then, according to Lemma~\ref{lem:3.11}, there exists an element $b\in D$ such that $ab\ne ba$ and the division subring $D_1=F(a,b)$ generated by $a$ and $b$ over $F$ is centrally finite with the center, say, $F_1$. It is clear that $G_1=G\cap D_1$ is an ascendant subgroup of $D_1^*$ and $G_1$ is radical over $F_1.$ Let $H_1=[G_1,G_1]$ be the derived subgroup of $G_1.$ If $x\in H_1,$ then $x^{n(x)}=\alpha\in F_1.$ Thus, by~\cite[Sublemma]{Pa_Herstein_1978}, $\alpha$ is a root of unity. This implies that $x$ is a periodic element. Hence, we conclude that $H_1$ is a periodic ascendant subgroup of $D_1^*.$ In view of Theorem~\ref{th:4.1}, $H_1$ is contained in $F_1.$ Thus, the subgroup $G_1$ is solvable ascendant in $D_1^*$. Now, according to~\cite[Theorem 3.8]{Pa_ChuaHai_2026}, it follows that $G_1$ is contained in $F_1,$ and so $ab=ba,$ a contradiction.
\end{proof}	

The following result shows that Conjecture~\ref{conj:2}  holds for weakly locally finite division rings.
\begin{theorem}\label{th:4.4} Let $D$ be a weakly locally finite division ring with center $F.$ Assume that $G$ is an ascendant subgroup of $D^*.$ If $G$ is radical over $F,$ then $G$ is central.
\end{theorem}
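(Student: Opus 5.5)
Suppose, towards a contradiction, that $G$ is non-central, and fix $a\in G\setminus F$. The plan is to reduce to a finitely generated division subring where weak local finiteness applies. Then we combine Theorem~\ref{th:4.3} with the radical hypothesis to produce a periodic ascendant subgroup, which Theorem~\ref{th:4.1} forces to be central.

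First step: choose a suitable finite-dimensional subring. Since $a\notin F$, some $b\in D^*$ fails to commute with $a$. Because $D$ is weakly locally finite, the division subring $D_1$ generated by $a$ and $b$ is centrally finite. Let $F_1=Z(D_1)$. As in the proof of Theorem~\ref{th:4.3}, the group $G_1=G\cap D_1^*$ is an ascendant subgroup of $D_1^*$. It contains $a$ but not $b$ necessarily, so more care is needed (see the third step).

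Second step: show that $G_1$ is radical over $F_1$ and reduce to the derived subgroup. For $x\in G_1$ we have $x^{n(x)}\in F\cap D_1\subseteq F_1$, so $G_1$ is radical over $F_1$. Put $H_1=[G_1,G_1]$. For $x\in H_1$ write $x^{n(x)}=\alpha\in F_1$. Since $D_1$ is centrally finite, Herstein's Sublemma~\cite[Sublemma]{Pa_Herstein_1978} applied to commutators shows that $\alpha$ is a root of unity. Hence $H_1$ is periodic. It is also ascendant in $D_1^*$, so Theorem~\ref{th:4.1} gives $H_1\le F_1^*$. Thus $G_1$ is solvable (in fact nilpotent of class at most $2$) and ascendant in $D_1^*$. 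By \cite[Theorem 3.8]{Pa_ChuaHai_2026}, $G_1\le F_1$.

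Third step: derive the contradiction. This is the main obstacle, because $b$ need not lie in $G$, so $G_1\le F_1$ does not directly contradict $ab\ne ba$. I would instead choose $b\in G$. If $G$ is non-abelian, pick non-commuting $a,b\in G$. Then $D_1$, generated by $a$ and $b$, is centrally finite by weak local finiteness, and $a,b\in G_1\le F_1$ is the desired contradiction.

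If $G$ is abelian, it remains to show that $G$ is central. An abelian group is solvable, so $G$ is a solvable ascendant subgroup of $D^*$. Then \cite[Theorem 3.8]{Pa_ChuaHai_2026} gives $G\le F^*$ directly, contradicting non-centrality.

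I expect the delicate point to be the Sublemma step. One must check that Herstein's Sublemma applies in the centrally finite setting, so that the central value $\alpha$ of a power of a commutator is a root of unity. This should follow exactly as in the proof of Theorem~\ref{th:4.3}, which uses the same argument.
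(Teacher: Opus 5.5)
Your proof is correct, but it takes a different route from the paper's.

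The paper argues globally in $D$. For $x\in G'$ one has $x^{n(x)}\in F^*\cap D'=Z(D')$, and \cite[Lemma 7]{Pa_dbh_2019} then gives that $x$ is periodic. So $G'$ is a periodic ascendant subgroup of $D^*$, hence central by Theorem~\ref{th:4.1}. Thus $G$ is solvable, and \cite[Theorem 3.8]{Pa_ChuaHai_2026} finishes.

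You instead localize: you run the argument of Theorem~\ref{th:4.3} inside the centrally finite division subring $D_1$ generated by two non-commuting elements $a,b\in G$. There you apply Herstein's Sublemma to $[G_1,G_1]\le D_1'$, then Theorem~\ref{th:4.1} and \cite[Theorem 3.8]{Pa_ChuaHai_2026}, and the contradiction is $a,b\in G_1\le F_1$. The key observation is that weak local finiteness lets you choose the non-commuting partner $b$ inside $G$. That is exactly what is unavailable in Theorem~\ref{th:4.3}, where $b$ comes from Skolem--Noether and need not lie in $G$.

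Your route needs only the reduced-norm Sublemma in a centrally finite division ring, a tool the paper already uses, rather than the external \cite[Lemma 7]{Pa_dbh_2019}. That lemma is essentially the same reduced-norm argument made global over $D$. The cost is the abelian/non-abelian case split. The paper's version is shorter and needs no case split.

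In a clean write-up, drop the false start in your first step, where $b\in D^*$ is arbitrary. Begin directly with non-commuting $a,b\in G$, with the abelian case dispatched by \cite[Theorem 3.8]{Pa_ChuaHai_2026}.
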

\begin{proof} If $a\in G',$ then there exists some positive integer $n(a)$ such that $a^{n(a)}\in F^*.$ Hence, $a^{n(a)}\in F^*\cap D'=Z(D').$ By~\cite[Lemma 7]{Pa_dbh_2019}, it follows that $a$ is a periodic element. We conclude that $G'$ is periodic. Observe that $G'$ is ascendant in $D^*.$ By Theorem~\ref{th:4.1}, $G'$ is central, which implies that $G$ is solvable. Hence, according to~\cite[Theorem 3.8]{Pa_ChuaHai_2026}, it follows that $G$ is central. 
\end{proof}
The following result gives the affirmative answer to Conjecture~\ref{conj:2} in case $D$ is a division ring whose center is a locally finite field.
\begin{theorem}\label{th:4.5} Let $D$ be a division ring with center $F$, and $G$ an ascendant subgroup of $D^*.$ If $G$ is radical over $F,$ then $G$ is central provided that $F$ is a locally finite field.
\end{theorem}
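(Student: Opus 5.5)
Since $F$ is a locally finite field, every element of $F^*$ has finite order: for $\alpha\in F^*$, the subfield $\mathbb{F}_p(\alpha)$ is finite, so $\alpha$ is a root of unity. The plan is to use this to show that $G$ itself is periodic, and then invoke Theorem~\ref{th:4.1}.

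Let $g\in G$. Because $G$ is radical over $F$, there is a positive integer $m=m(g)$ with $g^m\in F$. Since $g$ is invertible, $g^m\in F^*$. By the observation above, $(g^m)^k=1$ for some $k\ge 1$. Hence $g^{mk}=1$, so $g$ has finite order. This holds for every $g\in G$, so $G$ is a periodic group.

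Now $G$ is a periodic ascendant subgroup of $D^*$, and Theorem~\ref{th:4.1} shows that $G$ is central, which is the claim.

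There is essentially no obstacle here. The only point needing care is that $F$ has positive characteristic $p$: a locally finite field is algebraic over its prime subfield, and that prime subfield must be finite, so it is $\mathbb{F}_p$. Consequently every finitely generated subfield, in particular $\mathbb{F}_p(\alpha)$, is finite, and its multiplicative group is finite. As an alternative, one could use Theorem~\ref{th:4.3}, but it is not needed once periodicity has been established.
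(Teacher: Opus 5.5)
Your proof is correct and matches the paper's argument: since $F$ is locally finite, each $g\in G$ has a power lying in a finite subfield of $F$, so $G$ is periodic, and Theorem~\ref{th:4.1} then gives centrality. Your remark on why $F$ has positive characteristic only fills in a detail the paper leaves implicit.
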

\begin{proof} Consider an arbitrary element $a\in G$. Then, there exists some positive integer $n(a)$ such that $a^{n(a)}\in F.$ Since $F$ is a locally finite field, $a^{n(a)}$ belongs to some finite subfield $K$ of $F,$ which implies that $a$ is a periodic element. We conclude that $G$ is a periodic group, and so, $G$ is central by Theorem~\ref{th:4.1}.
\end{proof}
\begin{theorem}\label{th:4.6} Let $D$ be a division ring with center $F.$ Assume that $G$ is an ascendant subgroup of the multiplicative group $D^*.$ If $G$ is (generalized radical)-by-periodic, then $G$ is central-by-(torsion-free).
\end{theorem}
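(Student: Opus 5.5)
The plan is to follow the template of Theorem~\ref{th:3.13}, replacing almost subnormality by ascendancy and Theorem~\ref{th:2.5} by \cite[Theorem 3.8]{Pa_ChuaHai_2026}. That result handles solvable ascendant subgroups; I will use the version for generalized radical ascendant subgroups if that is what the cited paper proves, and otherwise derive it below.

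By hypothesis, $G$ has a generalized radical normal subgroup $N$ with $G/N$ periodic. Since $N$ is normal in $G$ and $G$ is ascendant in $D^*$, the subgroup $N$ is ascendant in $D^*$. The first step is to show $N\le F^*$.

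If \cite{Pa_ChuaHai_2026} contains the statement that generalized radical ascendant subgroups of $D^*$ are central, we quote it. Otherwise we argue by transfinite induction along an ascending normal series $1=N_0\le N_1\le\cdots\le N_\gamma=N$ whose factors are locally finite or locally nilpotent. Suppose $N_\alpha\le F^*$. Then $N_{\alpha+1}/N_\alpha$ is locally finite or locally nilpotent, and $N_\alpha$ is central. In the locally nilpotent case, $N_{\alpha+1}$ is a central extension of a locally nilpotent group, hence locally nilpotent, hence locally solvable. In the locally finite case, every finitely generated subgroup of $N_{\alpha+1}$ is central-by-finite. Each $N_{\alpha+1}$ is ascendant in $D^*$.

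One must then show that such ascendant subgroups are central. Since the property is local, it suffices to check commutativity of pairs. For a locally solvable ascendant subgroup, the natural route is to pass to finitely generated subgroups and use the solvable ascendant result \cite[Theorem 3.8]{Pa_ChuaHai_2026} together with a reduction to centrally finite subrings, as in Case~2 of Theorem~\ref{th:4.1}. Limit stages are unions of central subgroups and so are central.

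I expect this step to be the main obstacle. Ascendancy is not inherited by finitely generated subgroups, so locality has to be handled through the series itself. The simplest hope is that \cite{Pa_ChuaHai_2026} already provides that generalized radical ascendant subgroups are central, since this is the ascendant analogue of its Theorem 3.5.

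Once $N\le F^*$, we have $N\le G\cap F^*\le Z(G)$. Hence $G/Z(G)$ is periodic, so every $a\in G$ has some power $a^k\in Z(G)\le F^*$. In other words, $G$ is radical over $F$.

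We then apply Theorem~\ref{th:4.3}. This gives that $T(G)=\mathit{Tor}(G)\subseteq F$, that $\mathit{Tor}(G)\le G\cap F^*\le Z(G)$, and that $G/\mathit{Tor}(G)$ is torsion-free. This is already the statement that $G$ is central-by-(torsion-free), witnessed by the central normal subgroup $\mathit{Tor}(G)$.

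To match the phrasing literally, I would also check whether $G/Z(G)$ is torsion-free. The expected answer is that it need not be, so the witness is $\mathit{Tor}(G)$, exactly as in Theorem~\ref{th:3.12}.
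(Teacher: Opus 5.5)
Your proposal is correct and matches the paper's proof: the paper takes a generalized radical normal subgroup $N$ with $G/N$ periodic and quotes the fact that generalized radical ascendant subgroups of $D^*$ are central (Theorem~3.8 of the authors' earlier work, also used in Theorem~\ref{th:4.7}). It then deduces $N\le Z(G)$, so $G$ is radical over $F$, and concludes via Theorem~\ref{th:4.3} with $\mathit{Tor}(G)$ as the central witness. Since that centrality result is available, your fallback transfinite induction is unnecessary; you rightly flagged its locally nilpotent step as unfinished.
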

\begin{proof} Assume that $G$ is (generalized radical)-by-periodic. Then, $G$ contains a generalized radical normal subgroup $N$ of $G$ such that $G/N$ is periodic. Observe that $N$ is also a generalized radical ascendant subgroup of the multiplicative group $D^*.$ Thus, in view of~\cite[Theorem 3.8]{Pa_HaiChua_2025}, $N$ is central, and so $N\subseteq G\cap F^*=Z(G).$ This implies that the central factor group $G/Z(G)$ is periodic, and so $G$ is radical over $F.$ Thus, according to Theorem~\ref{th:4.3}, $G$ is central-by-(torsion-free).
\end{proof}

\begin{theorem}\label{th:4.7} Let $D$ be a division ring, and $G$  an ascendant subgroup of the multiplicative group $D^*.$ If $G$ is periodic-by-(generalized radical), then $G$ is central.
\end{theorem}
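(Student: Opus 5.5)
We follow the proof of Theorem~\ref{th:3.14}, with Theorem~\ref{th:4.1} in place of Theorem~\ref{th:3.2} and the ascendant version of the generalized radical result in place of Theorem~\ref{th:2.5}.

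Suppose $G$ is periodic-by-(generalized radical). Then there is a periodic normal subgroup $N$ of $G$ with $G/N$ generalized radical. Since $N\trianglelefteq G$ and $G$ is ascendant in $D^*$, the subgroup $N$ is ascendant in $D^*$: append $N\trianglelefteq G$ to the ascending series from $G$ to $D^*$. By Theorem~\ref{th:4.1}, $N\le F^*$.

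Next we show that $G$ is generalized radical. Since $N$ is central, it is abelian, so $N$ is both locally nilpotent and generalized radical. The class of generalized radical groups is closed under extensions: given ascending normal series of $N$ and of $G/N$ whose factors are locally finite or locally nilpotent, pull the series of $G/N$ back to $G$ and concatenate it after the series of $N$. This gives an ascending normal series of $G$ with the same kind of factors. Alternatively, one can simply refine: starting from $1\le N$, the factor $N/1$ is abelian, and the remaining factors are those of $G/N$. In any case, $G$ is a generalized radical ascendant subgroup of $D^*$.

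We then apply \cite[Theorem 3.8]{Pa_HaiChua_2025}, the same result invoked in the proof of Theorem~\ref{th:4.6}, which says that generalized radical ascendant subgroups of $D^*$ are central. This gives $G\le F^*$.

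The only step needing care is the extension-closure of the generalized radical class. This is standard, since "generalized radical" is defined by the existence of an ascending normal series with locally finite or locally nilpotent factors. Here it is even easier, because $N$ is abelian and sits at the bottom of the series, so no transfinite bookkeeping is needed beyond concatenating the series.
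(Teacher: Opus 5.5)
Your proof is correct and follows the paper's argument. You show the periodic normal subgroup is ascendant in $D^*$, hence central by Theorem~\ref{th:4.1}; then $G$ is generalized radical, which you check directly by concatenating series where the paper cites \cite[Lemma 2.1]{Pa_ChuaHai_2026}; finally the centrality theorem for generalized radical ascendant subgroups finishes (the paper's proof of this theorem cites it as \cite[Theorem 3.8]{Pa_ChuaHai_2026}, the same fact the proof of Theorem~\ref{th:4.6} attributes to \cite[Theorem 3.8]{Pa_HaiChua_2025}).
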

\begin{proof} Assume that $G$ is periodic-by-(generalized radical) ascendant in $D^*.$ Then, $G$ contains a periodic normal subgroup $H$ of $G$ such that $G/H$ is generalized radical. Observe that $H$ is a periodic ascendant subgroup of the multiplicative group $D^*.$ According to Theorem~\ref{th:4.1}, $H$ is central, and so by \cite[Lemma 2.1]{Pa_ChuaHai_2026}, $G$ is generalized radical ascendant in $D^*$. Hence, in view of~\cite[Theorem 3.8]{Pa_ChuaHai_2026}, $G$ is central.
\end{proof}
To prove the next result, we need the following result in~\cite[Lemma 1.2.17]{Bo_Dixon_2017}.
\begin{proposition}\label{pro:4.8}{\rm \cite[Lemma 1.2.17]{Bo_Dixon_2017}} If $G$ is a periodic locally generalized radical group, then $G$ is locally finite.
\end{proposition}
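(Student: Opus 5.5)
The plan is to reduce to the finitely generated case and then run a transfinite induction along an ascending normal series. Since being locally finite is a property of finitely generated subgroups, it suffices to show that every finitely generated subgroup $H$ of $G$ is finite. By hypothesis such an $H$ is generalized radical, and it is periodic because it is a subgroup of the periodic group $G$. So the whole proof comes down to showing that \emph{a periodic generalized radical group is locally finite}. Applying this to $H$, which is finitely generated, then gives that $H$ is finite.

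\textbf{Step 1: the factors are locally finite.} Fix an ascending normal series
$$1=H_0\le H_1\le\cdots\le H_\alpha\le H_{\alpha+1}\le\cdots\le H_\gamma=H$$
whose factors are locally finite or locally nilpotent. I will check that every factor is locally finite. Each factor $H_{\alpha+1}/H_\alpha$ is periodic, being a section of $H$. If it is locally nilpotent, a finitely generated subgroup of it is a finitely generated periodic nilpotent group. Such a group is finite: the abelian factors of its lower central series are finitely generated and periodic, hence finite. So every factor is locally finite.

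\textbf{Step 2: transfinite induction on $\alpha$.} I will show that $H_\alpha$ is locally finite for all $\alpha\le\gamma$.
\begin{enumerate}
\item[(a)] For a limit ordinal $\lambda$, we have $H_\lambda=\bigcup_{\alpha<\lambda}H_\alpha$. Any finite subset of $H_\lambda$ lies in a single $H_\alpha$ with $\alpha<\lambda$, which is locally finite by induction. So the subgroup it generates is finite.
\item[(b)] For a successor ordinal, I need the standard fact that the class of locally finite groups is closed under extensions. Let $N\trianglelefteq K$ with $N$ and $K/N$ locally finite, and let $L\le K$ be finitely generated. Then $LN/N$ is finitely generated in $K/N$, hence finite. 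Therefore $L\cap N$ has finite index in $L$, and by Schreier's lemma it is finitely generated. Since $L\cap N\le N$, it is finite, and hence $L$ is finite. Applying this with $N=H_\alpha$ and $K=H_{\alpha+1}$ completes the successor step.
\end{enumerate}

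Hence $H=H_\gamma$ is locally finite, and being finitely generated, $H$ is finite. This proves that $G$ is locally finite.

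The only step with real content is the extension-closure argument in (b), specifically the use of Schreier's lemma to pass from ``finite index in a finitely generated group'' to ``finitely generated''. Everything else is bookkeeping over the ascending series.
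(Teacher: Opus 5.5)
Your proof is correct. The paper gives no argument of its own here; it simply quotes \cite[Lemma 1.2.17]{Bo_Dixon_2017}. What you wrote is the standard argument behind that lemma: periodic locally nilpotent groups are locally finite, and local finiteness is preserved under extensions (your Schreier argument) and under unions of chains. It therefore passes up any ascending series with such factors, and the finitely generated subgroups of $G$ are finite.

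Two small remarks. First, the finite generation of the lower central factors of a finitely generated nilpotent group deserves its one-line justification: they are generated by the images of the left-normed commutators in the generators, or you can use the maximal condition for finitely generated nilpotent groups. Second, your induction only uses $H_\alpha\trianglelefteq H_{\alpha+1}$. So it works whether ``generalized radical'' is defined via an ascending series or via an ascending normal series, as in the paper's Lemma~\ref{lem:2.2}.
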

\begin{theorem}\label{th:4.9} Let $D$ be a division ring, and $n$ a positive integer. Assume that $G$ is a subgroup of the general skew linear group $\mathrm{GL}_n(D).$ Suppose further that  $D$ is not a locally finite field in case $n\geq 2.$ Then, the following assertions hold.
	\begin{itemize}
		\item [(1)]  If $G$ is periodic locally generalized radical ascendant subgroup of the general skew linear group $\mathrm{GL}_n(D),$ then $G$ is central.
		\item [(2)] If $G$ is periodic locally generalized radical ascendant subgroup of the special skew linear group $\mathrm{SL}_n(D),$ then $G$ is contained in $Z(\mathrm{SL}_n(D)).$
		\item [(3)] The projective general skew linear group $\mathrm{PGL}_n(D)$ contains no non-trivial periodic locally generalized radical ascendant subgroups.
		\item [(4)] The  projective special skew linear group $\mathrm{PSL}_n(D)$ contains no non-trivial periodic locally generalized radical ascendant subgroups.
	\end{itemize}
\end{theorem}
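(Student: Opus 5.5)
The plan is to reduce everything to the almost subnormal case of Theorem~\ref{th:3.2}. The bridge is Proposition~\ref{pro:4.8}, which turns periodicity plus local generalized radicality into local finiteness, together with the fact (\cite[Proposition 1.2.15]{Bo_Dixon_2017}, already used in the proof of Theorem~\ref{th:4.1}) that a locally finite ascendant subgroup has locally finite normal closure.

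For (1), let $G$ be a periodic, locally generalized radical, ascendant subgroup of $\mathrm{GL}_n(D)$. By Proposition~\ref{pro:4.8}, $G$ is locally finite. Since $G$ is ascendant, the normal closure $N=G^{\mathrm{GL}_n(D)}$ is a locally finite normal subgroup of $\mathrm{GL}_n(D)$. In particular $N$ is periodic and normal, hence almost subnormal. Under the stated hypothesis on $D$ when $n\ge 2$, Theorem~\ref{th:3.2} shows that $N$ is central, and therefore so is $G\le N$. The only step I expect to need care is checking that the cited Dixon result applies to an ascendant subgroup of an arbitrary group, which is exactly how it is quoted in Case~1 of Theorem~\ref{th:4.1}.

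For (2), note that ascendancy is transitive and $\mathrm{SL}_n(D)\trianglelefteq \mathrm{GL}_n(D)$. Hence an ascendant subgroup of $\mathrm{SL}_n(D)$ is ascendant in $\mathrm{GL}_n(D)$: append the step $\mathrm{SL}_n(D)\trianglelefteq\mathrm{GL}_n(D)$ to the top of the series. By (1), $G\le Z(\mathrm{GL}_n(D))\cap\mathrm{SL}_n(D)=Z(\mathrm{SL}_n(D))$.

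For (3), let $\overline{G}$ be a periodic, locally generalized radical, ascendant subgroup of $\mathrm{PGL}_n(D)$, and let $G$ be its full preimage in $\mathrm{GL}_n(D)$. Preimages of an ascending series give an ascending series, so $G$ is ascendant in $\mathrm{GL}_n(D)$. Arguing directly is easier than working with $G$: $\overline{G}$ is locally finite by Proposition~\ref{pro:4.8}, so its normal closure $\overline{N}$ in $\mathrm{PGL}_n(D)$ is a locally finite, hence periodic, normal subgroup. Normal subgroups are almost subnormal, so Theorem~\ref{th:3.4} gives $\overline{N}=1$ whenever $n\ge 2$ and $D$ is not a locally finite field. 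For $n=1$, Theorem~\ref{th:3.4} needs weak local finiteness, so I would instead take the preimage $N$ of $\overline{N}$ in $D^*$. This $N$ is normal in $D^*$ and is central-by-(locally finite), so it is locally generalized radical (cf.\ \cite[Lemma 2.1]{Pa_ChuaHai_2026}). Theorem~\ref{th:2.5} then gives $N\le F^*$, hence $\overline{N}=1$ and $\overline{G}=1$. Handling this $n=1$ case without extra hypotheses is the step I regard as the main obstacle, and Theorem~\ref{th:2.5} is what resolves it.

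For (4), use $\mathrm{PSL}_n(D)\cong\mathrm{PGL}_n'(D)\trianglelefteq\mathrm{PGL}_n(D)$. An ascendant subgroup of $\mathrm{PSL}_n(D)$ is then ascendant in $\mathrm{PGL}_n(D)$, and (3) applies.
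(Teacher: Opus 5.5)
Your proposal is correct and follows the paper's proof essentially step by step: Proposition~\ref{pro:4.8} and \cite[Proposition 1.2.15]{Bo_Dixon_2017} give a locally finite normal closure, then Theorem~\ref{th:3.2} settles (1), Theorem~\ref{th:3.4} settles (3) for $n\ge 2$, and (2) and (4) reduce to (1) and (3). The only difference is the last step of the case $n=1$ in (3): the paper applies \cite[Theorem 2.6]{Pa_ChNaHa_24} to the abelian-by-(locally finite) preimage in $D^*$, while you apply Theorem~\ref{th:2.5} to the same preimage, which works equally well because that preimage is already generalized radical.
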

\begin{proof} (1) According to Proposition~\ref{pro:4.8}, $G$ is a locally finite ascendant subgroup of $\mathrm{GL}_n(D)$. Hence, in view of~\cite[Proposition 1.2.15]{Bo_Dixon_2017}, the normal closure $G^{\mathrm{GL}_n(D)}$ is locally finite normal in $\mathrm{GL}_n(D).$ By Theorem~\ref{th:3.2}, $G^{\mathrm{GL}_n(D)}$ is central, so does $G.$
	
(2) Observe that $G$ is periodic locally generalized radical ascendant subgroup of $\mathrm{GL}_n(D).$ By the conclusion of (1), $G\subseteq Z(\mathrm{GL}_n(D)).$ Hence, $$G\subseteq Z(\mathrm{GL}_n(D))\cap \mathrm{SL}_n(D)=Z(\mathrm{SL}_n(D)).$$
	
(3) Suppose by contrary that $\mathrm{PGL}_n(D)$ contains a non-trivial periodic locally generalized radical ascendant subgroup $U$. Then, in view of Proposition~\ref{pro:4.8}, it follows that $U$ is a locally finite ascendant subgroup of $\mathrm{PGL}_n(D)$. Hence, in view of~\cite[Proposition 1.2.15]{Bo_Dixon_2017}, the normal closure $U^{\mathrm{PGL}_n(D)}$ of $U$ in $\mathrm{PGL}_n(D)$ is a non-trivial locally finite normal subgroup of $\mathrm{PGL}_n(D).$ By Theorem~\ref{th:3.4}, it follows that $n=1,$ and that $U^{D^*/F^*}$ is a non-trivial locally finite normal subgroup of $D^*/F^*.$ We can write $U^{D^*/F^*}=V/F^*,$ where $V$ is a non-central normal subgroup of $D^*.$ Then, $V$ is abelian-by-(locally finite), that contradicts to~\cite[Theorem 2.6]{Pa_ChNaHa_24}. 
	
(4) Suppose by contrary that $\mathrm{PSL}_n(D)$ contains a non-trivial periodic locally generalized radical ascendant subgroup. Since $\mathrm{PSL}_n(D)$ is isomorphic to the derived subgroup $\mathrm{PGL}_n'(D)$ of $\mathrm{PGL}_n(D),$ it follows that $\mathrm{PGL}_n'(D)$ contains a non-trivial periodic locally generalized radical ascendant subgroup and so does the group $\mathrm{PGL}_n(D).$ This contradicts to the conclusion of (3), and so the result follows. 
	
	The proof of the theorem is now complete.
\end{proof}
\begin{corollary}\label{cor:4.10} Let $D$ be a division ring, and $n$ a positive integer. Assume that $G$ is a subgroup of the general skew linear group $\mathrm{GL}_n(D).$ Suppose further that  $D$ is not a locally finite field in case $n\geq 2.$ Then, the following assertions hold:
	\begin{itemize}
		\item [(1)] If $G$ is a locally generalized radical ascendant subgroup of $\mathrm{GL}_n(D),$ then $T(G)$ is a subgroup of $G$ if and only if $T(G)\subseteq Z(\mathrm{GL}_n(D)).$ In case this, $T(G)=\mathit{Tor}(G)$, and the quotient group $G/\mathit{Tor}(G)$ is torsion-free.	
		\item [(2)] If $H$ is a locally generalized radical ascendant subgroup of $\mathrm{SL}_n(D),$ then $T(H)$ is a subgroup of $H$ if and only if $T(H)\subseteq Z(\mathrm{SL}_n(D)).$ In case this, $T(H)=\mathit{Tor}(H)$, and the quotient group $H/\mathit{Tor}(H)$ is torsion-free.
		\item [(3)] If $U$ is a locally generalized radical ascendant subgroup of the general projective skew linear group $\mathrm{PGL}_n(D),$ then $T(U)$ is a subgroup of $U$ if and only if $T(U)=1.$ In case this, $U$ is a torsion-free group.
		\item [(4)] If $V$ is a locally generalized radical ascendant subgroup of the  special projective skew linear group $\mathrm{PSL}_n(D),$ then $T(V)$ is a subgroup of $V$ if and only if $T(V)=1.$ In case this, $V$ is a torsion-free group.
	\end{itemize}
\end{corollary}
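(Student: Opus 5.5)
The plan is to reduce each of the four assertions to the corresponding part of Theorem~\ref{th:4.9}, in the same way that Proposition~\ref{pro:3.5} was reduced to Theorem~\ref{th:3.2}. Throughout, the standing hypothesis ($D$ not a locally finite field when $n\ge 2$) is what makes Theorem~\ref{th:4.9} applicable.

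For (1), suppose that $T(G)$ is a subgroup of $G$. It is then a characteristic, hence normal, subgroup of $G$. Since $G$ is ascendant in $\mathrm{GL}_n(D)$, composing the normal inclusion $T(G)\trianglelefteq G$ with the ascending series from $G$ to $\mathrm{GL}_n(D)$ shows that $T(G)$ is ascendant in $\mathrm{GL}_n(D)$. Moreover, $T(G)$ is periodic, and as a subgroup of the locally generalized radical group $G$ it is again locally generalized radical. This uses the fact that finitely generated subgroups of a locally generalized radical group are generalized radical, and that subgroups of generalized radical groups are generalized radical, as recorded in \cite{Pa_ChuaHai_2026}. Theorem~\ref{th:4.9}(1) then gives $T(G)\subseteq Z(\mathrm{GL}_n(D))$.

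Conversely, if $T(G)$ is central, then any two periodic elements commute, so their product is again periodic. Inverses of periodic elements are periodic, so $T(G)$ is a subgroup. Once $T(G)$ is a normal subgroup containing all periodic elements, it contains every periodic normal subgroup, so $T(G)=\mathit{Tor}(G)$. For torsion-freeness of $G/\mathit{Tor}(G)$: if $x^k\in T(G)$, then $x^k$ has finite order, so $x$ is periodic and $x\in T(G)$.

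Part (2) follows by the same argument using Theorem~\ref{th:4.9}(2), or alternatively by noting that $H$ is ascendant in $\mathrm{GL}_n(D)$ (as $\mathrm{SL}_n(D)\trianglelefteq \mathrm{GL}_n(D)$) and intersecting with $Z(\mathrm{GL}_n(D))\cap\mathrm{SL}_n(D)=Z(\mathrm{SL}_n(D))$. Parts (3) and (4) follow from Theorem~\ref{th:4.9}(3),(4) respectively. If $T(U)$ is a subgroup, it is a periodic, locally generalized radical, ascendant subgroup of $\mathrm{PGL}_n(D)$ (respectively of $\mathrm{PSL}_n(D)$), so it is trivial. Then $U$ has no nontrivial periodic elements, i.e.\ it is torsion-free. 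The converse direction is trivial.

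The only point needing care is the transfer of ascendance and of the locally generalized radical property to $T(G)$. This is routine: ascendance is transitive via concatenation of ascending series, and the locally generalized radical class is subgroup-closed. I therefore expect no genuine obstacle; the whole corollary reduces to Theorem~\ref{th:4.9}.
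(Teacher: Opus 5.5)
Your proposal is correct and follows the paper's approach: the paper simply states that the corollary follows immediately from Theorem~\ref{th:4.9}. You supply the routine details that this omits, namely normality of $T(G)$, transitivity of ascendance, subgroup-closure of the locally generalized radical class, and the easy converses, in the same way Proposition~\ref{pro:3.5} follows from Theorem~\ref{th:3.2}.
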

\begin{proof} The result immediately follows from Theorem~\ref{th:4.9}. 
\end{proof}

\section{Locally generalized radical linear groups\\ over left artinian rings}\label{S5}

In this section, we investigate the stucture of locally generalized radical linear groups over a left artinian ring. The main results here are Theorems \ref{th:5.7} and \ref{th:5.8}, which give in particular the affirmative answer to GBP for almost subnormal subgroups of the general linear group $\mathrm{GL}_n(R)$ over a left artinian ring $R.$

Let $R$ be a ring with the Jacobson radical $J=J(R)$, and $I$ an ideal of $R$. we consider the subset $G(I)$ of $R$, which is defined as follows
$$G(I):=1+I=\{1+a\mid a\in I\}.$$
If $I$ is a nilpotent ideal of $R,$ then the \textit{nilpotency class} of $I$ is the minimal positive integer $k$ such that $I^k=0$. Clearly, every nilpotent ideal is a nil ideal of $R$. Recall that the converse also true if $R$ is a left artinian ring. The following result shows that if $I$ is a nilpotent ideal of $R,$ then $G(I)$ is a normal nilpotent subgroup of the unit group $R^\times$ and $R^\times/G(I)\cong (R/I)^\times,$ and that $G(I)\trianglelefteq G(J).$

\begin{lemma}\label{lem:5.1} Let $R$ be a ring with its Jacobson radical $J=J(R)$, $R^\times$ the unit group of $R$, and $I$ an ideal of $R$. Then, the following assertions hold:
\begin{itemize}
\item [(1)] $G(I)$ is a submonoid of $R^\times.$
\item [(2)] If $I\subseteq J,$ then $G(I)$ is a normal subgroup of $R^\times$ and $R^\times/G(I)\cong (R/I)^\times.$ In particular, $G(J)$ is normal in $R^\times$ and $R^\times/G(J)\cong (R/J)^\times.$ Moreover, the unit group $R^\times$ has a normal series $$1\trianglelefteq G(I)\trianglelefteq G(J)\trianglelefteq R^\times.$$
\item [(3)] If $I$ is a nilpotent ideal of nilpotency class $k$, then $G(I)$ is a nilpotent group of nilpotency class at most $k.$ Moreover, $G(I)\trianglelefteq R^\times$ and $$R^\times/G(I)\cong (R/I)^\times.$$
\end{itemize}	
\end{lemma}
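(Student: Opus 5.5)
The plan is to verify the three assertions in order by direct computation in $R$. Only two standard facts are needed: an element $1+a$ with $a\in J$ is invertible, and the Jacobson radical $J$ is a two-sided ideal.

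For (1), closure is the identity
$$(1+a)(1+b)=1+(a+b+ab),$$
where $a+b+ab\in I$ because $I$ is an ideal; also $1=1+0\in G(I)$. This makes $G(I)$ a submonoid of $(R,\cdot)$. For elements of $G(I)$ to lie in $R^\times$ one needs invertibility of $1+a$. That holds when $I\subseteq J$, or when $I$ is nil, via the geometric series. So in (1) I will note that the natural reading is "submonoid of $(R,\cdot)$", with units guaranteed in the settings of (2) and (3). If the authors intend $G(I)\subseteq R^\times$ in general, I expect they implicitly assume $I\subseteq J$.

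For (2), take $I\subseteq J$. Then each $1+a$ with $a\in I$ is a unit, and its inverse is $1+a'$ with
$$a'=-a(1+a)^{-1}\in I.$$
Hence $G(I)$ is a subgroup of $R^\times$. Normality follows from
$$u(1+a)u^{-1}=1+uau^{-1}\quad\text{with } uau^{-1}\in I.$$
For the isomorphism, consider the map $\pi:R^\times\to (R/I)^\times$ induced by the projection. Its kernel is exactly $R^\times\cap(1+I)=G(I)$. The main point is surjectivity. Suppose $\bar r$ is a unit in $R/I$, with $rs=1+x$ and $sr=1+y$ for some $x,y\in I\subseteq J$. Then $rs$ and $sr$ are units, so $r$ has both a right and a left inverse, hence $r\in R^\times$. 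The case $I=J$ gives the statement for $G(J)$. Finally, $G(I)\subseteq G(J)$, and $G(I)$ is normal in $R^\times$, hence in $G(J)$. This yields the series $1\trianglelefteq G(I)\trianglelefteq G(J)\trianglelefteq R^\times$.

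For (3), take $I$ nilpotent of class $k$. Then $I$ is nil, so $I\subseteq J$ and (2) applies, giving normality and $R^\times/G(I)\cong (R/I)^\times$. For nilpotency, set $G_i=1+I^i$ for $1\le i\le k$, so that $G_k=\{1\}$. Each $G_i$ is normal in $G(I)$, by the same argument as in (2). The key computation is that $[G_i,G_1]\subseteq G_{i+1}$. Take $g=1+a$ with $a\in I^i$ and $h=1+b$ with $b\in I$. Then
$$[g,h]-1=g^{-1}h^{-1}(gh-hg)=g^{-1}h^{-1}(ab-ba).$$
Here $ab-ba\in I^{i+1}$ and $I^{i+1}$ is an ideal, so $[g,h]\in 1+I^{i+1}$. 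Thus $G(I)=G_1\ge G_2\ge\cdots\ge G_k=1$ is a central series of length $k-1$. This gives nilpotency class at most $k-1\le k$.

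The only potentially delicate steps are surjectivity in (2), which reduces to lifting units modulo a radical ideal, and the interpretation of (1). Everything else is routine.
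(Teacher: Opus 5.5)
Your proposal is correct and follows the paper's proof essentially step by step: the product identity for (1), conjugation $u(1+a)u^{-1}=1+uau^{-1}$ and the reduction map $R^\times\to(R/I)^\times$ for (2), and the filtration $1+I^i$ with $[x,y]-1=x^{-1}y^{-1}(ab-ba)$ for (3). You are also right that (1) as literally stated holds only as a submonoid of $(R,\cdot)$ unless $I\subseteq J$ or $I$ is nil (e.g.\ $3\in 1+2\mathbb{Z}$ is not a unit of $\mathbb{Z}$), and your explicit inverse $1+a'$ and your unit-lifting argument supply two steps the paper merely asserts.
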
 
\begin{proof} (1) For every $x=1+a, y=1+b\in G(I)$, where $a,b\in I$, we have
 $$xy=(1+a)(1+b)=1+(a+b+ab)\in G(I),$$	
which shows that $G(I)$ is closed under multiplication, and the identity element $1\in G(I).$ Consequently, $G(I)$ is a submonoid of the unit group $R^\times.$

(2) Suppose that $I\subseteq J$. If $a\in I,$ then $a\in J,$ and so $1+a\in R^\times.$ Thus, $G(I)\subseteq R^\times.$ Hence, by conclusion of (1), we get that $G(I)$ is a subgroup of $R^\times$. Further, for any $r\in R^\times$ and $a\in I$, we have
$$rxr^{-1}=r(1+a)r^{-1}=1+rar^{-1}\in G(I),$$ which shows that $G(I)$ is normal in $R^\times.$ Since $I\subseteq J,$ it follows that $G(I)\subseteq G(J).$ As a consequence, we get the normal series 
	$$1\trianglelefteq G(I)\trianglelefteq G(J)\trianglelefteq R^\times.$$
Now, consider any non-zero element $a\in R$. Observe that $a\in R^\times$ if and only if $\overline{a}=a+I\in (R/I)^\times$ because $I\subseteq J$. It follows that the natural homomorphism $R^\times\longrightarrow (R/I)^\times$ given by $a\mapsto \overline{a},$ is a surjective homomorphism whose kernel is $G(I),$ and hence $R^\times/G(I)\cong (R/I)^\times.$
		
(3) Suppose that $I$ is a nilpotent ideal of nilpotency $k$. For all $r\in \{1,2,\ldots,k\},$ let $$G_r(I)=1+I^r\subseteq G(I).$$
Since $I^r\subseteq J$, in view of (2), $G_r(I)$ is a normal subgroup of $R^\times$, and we have the following  normal series of subgroups
$$1=G_k(I)\le G_{k-1}(I)\le\cdots\le G_1(I)=G(I).$$
We shall prove that this is a central series of $G(I)$. Indeed, for each $1\leq s, r\leq k,$ let $x=1+a\in G_s(I),$ with $a\in I^s$ and $y=1+b\in G_r(I),$ with $b\in I^r.$ We have
	\begin{gather}
		\begin{split}
			[x,y]-1&=x^{-1}y^{-1}xy-1\\
			&=x^{-1}y^{-1}(xy-yx)\\
			&=x^{-1}y^{-1}(ab-ba)\\
			&\in R\cdot(I^s\cdot I^r+I^r\cdot I^s)\subseteq I^{s+r}.
		\end{split}\notag
	\end{gather}
	This implies that $[x,y]\in G_{s+r}(I),$ and so $[G_s(I),G_r(I)]\le G_{s+r}(I).$ Hence, we conclude that $G(I)$ is a nilpotent group of nilpotency class at most $k$, and at this point,  the proof of the lemma is  complete.
\end{proof}
Let $I$ be an ideal of a ring $R$, and $n$ a positive integer. It is known that $\mathrm{M}_n(I)$ is an ideal of the matrix ring $\mathrm{M}_n(R)$ and $\mathrm{M}_n(R)/\mathrm{M}_n(I)\cong\mathrm{M}_n(R/I).$ If $I$ is nilpotent in $R$, then $\mathrm{M}_n(I)$ is nilpotent in $\mathrm{M}_n(R).$ According to Lemma~\ref{lem:5.1}, $$G(\mathrm{M}_n(I))=1+\mathrm{M}_n(I)$$ is a  nilpotent normal subgroup of the general linear group $\mathrm{GL}_n(R)$ and $$\mathrm{GL}_n(R)/G(\mathrm{M}_n(I))\cong (\mathrm{M}_n(R)/\mathrm{M}_n(I))^\times\cong \mathrm{GL}_n(R/I).$$ Let $J=J(R)$ be the Jacobson radical of $R.$ Since $I$ is nilpotent, it follows that $I\subseteq J,$ and so $\mathrm{M}_n(I)\subseteq \mathrm{M}_n(J).$ This implies that $G(\mathrm{M}_n(I))\subseteq G(\mathrm{M}_n(J)).$ It is known that $\mathrm{M}_n(J)=J(\mathrm{M}_n(R))$ is the Jacobson radical of the matrix ring $\mathrm{M}_n(R).$ Thus, in view of Lemma~\ref{lem:5.1}, $G(\mathrm{M}_n(J))$ is a normal subgroup of $\mathrm{GL}_n(R)$ and $$\mathrm{GL}_n(R)/G(\mathrm{M}_n(J))\cong (\mathrm{M}_n(R)/\mathrm{M}_n(J))^\times\cong \mathrm{GL}_n(R/J).$$ Now, we set $\mathrm{JL}_n(R):=G(\mathrm{M}_n(J)).$ Then,  $$\mathrm{JL}_n(R)=1+\mathrm{M}_n(J)=1+J(\mathrm{M}_n(R))$$ is a normal subgroup of the group $\mathrm{GL}_n(R)$ and $\mathrm{GL}_n(R)/\mathrm{JL}_n(R)\cong \mathrm{GL}_n(R/J).$ Moreover, the general linear group $\mathrm{GL}_n(R)$ has a normal series $$1\trianglelefteq G(\mathrm{M}_n(I))\trianglelefteq \mathrm{JL}_n(R)\trianglelefteq \mathrm{GL}_n(R),$$ where $G(\mathrm{M}_n(I))$ is a  nilpotent normal subgroup of the group $\mathrm{GL}_n(R)$ and $$\mathrm{GL}_n(R)/G(\mathrm{M}_n(I))\cong \mathrm{GL}_n(R/I).$$ 
We summary the facts above in the following lemma.
\begin{lemma}\label{lem:5.2} Let $R$ be a ring, $J=J(R)$ its Jacobson radical, and $n$ is a positive integer. If $I$ is a nilpotent ideal of $R,$ then $\mathrm{M}_n(I)$ is a nilpotent ideal of the matrix ring $\mathrm{M}_n(R),$  and  $\mathrm{M}_n(I)\subseteq \mathrm{M}_n(J).$ Moreover, the following assertions hold:
\begin{itemize}
\item [(1)] $G(\mathrm{M}_n(I))$ is a normal nilpotent subgroup of $\mathrm{GL}_n(R)$ and $$\mathrm{GL}_n(R)/G(\mathrm{M}_n(I))\cong \mathrm{GL}_n(R/I).$$
\item [(2)] $\mathrm{JL}_n(R)\trianglelefteq\mathrm{GL}_n(R)$ and $\mathrm{GL}_n(R)/\mathrm{JL}_n(R)\cong \mathrm{GL}_n(R/J).$
\item [(3)] The general linear group $\mathrm{GL}_n(R)$ has a normal series $$1\trianglelefteq G(\mathrm{M}_n(I))\trianglelefteq \mathrm{JL}_n(R)\trianglelefteq \mathrm{GL}_n(R).$$
\item [(4)] If $J$ is a nilpotent ideal of $R,$ then $\mathrm{JL}_n(R)\triangleleft\mathrm{GL}_n(R),$ and $$\mathrm{GL}_n(R)/\mathrm{JL}_n(R)\cong\mathrm{GL}_n(R/J).$$
\end{itemize}
\end{lemma}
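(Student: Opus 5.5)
The plan is to reduce every assertion of Lemma~\ref{lem:5.2} to Lemma~\ref{lem:5.1}, applied to the matrix ring $S=\mathrm{M}_n(R)$ in place of $R$. The whole argument rests on three standard ring-theoretic facts, which I will record first:
\begin{itemize}
\item[(a)] $\mathrm{M}_n(I)$ is an ideal of $S$ and $S/\mathrm{M}_n(I)\cong \mathrm{M}_n(R/I)$, via entrywise reduction modulo $I$;
\item[(b)] $J(\mathrm{M}_n(R))=\mathrm{M}_n(J(R))$;
\item[(c)] $\mathrm{M}_n(I)^k\subseteq \mathrm{M}_n(I^k)$.
\end{itemize}
Fact (c) holds because each entry of a product of $k$ matrices with entries in $I$ is a sum of products $a_1a_2\cdots a_k$ with $a_i\in I$. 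Hence if $I^k=0$ then $\mathrm{M}_n(I)^k=0$, so $\mathrm{M}_n(I)$ is a nilpotent ideal of $S$. Since a nilpotent ideal is nil and every nil one-sided ideal lies in the Jacobson radical, we get $I\subseteq J$, and therefore $\mathrm{M}_n(I)\subseteq \mathrm{M}_n(J)$. This settles the preliminary claims of the statement.

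For (1), apply Lemma~\ref{lem:5.1}(3) to the ring $S$ and the nilpotent ideal $\mathrm{M}_n(I)$. It gives that $G(\mathrm{M}_n(I))$ is a nilpotent normal subgroup of $S^\times=\mathrm{GL}_n(R)$, together with $S^\times/G(\mathrm{M}_n(I))\cong (S/\mathrm{M}_n(I))^\times$. By (a), the right-hand side equals $\mathrm{M}_n(R/I)^\times=\mathrm{GL}_n(R/I)$.

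For (2), apply Lemma~\ref{lem:5.1}(2) to $S$ with the ideal $J(S)$. By (b), $J(S)=\mathrm{M}_n(J)$. This yields that $\mathrm{JL}_n(R)=1+\mathrm{M}_n(J)$ is normal in $\mathrm{GL}_n(R)$, with quotient $(S/\mathrm{M}_n(J))^\times\cong\mathrm{GL}_n(R/J)$.

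For (3), apply Lemma~\ref{lem:5.1}(2) to the ideal $\mathrm{M}_n(I)\subseteq J(S)$. This gives the normal series $1\trianglelefteq G(\mathrm{M}_n(I))\trianglelefteq G(J(S))\trianglelefteq S^\times$, which is exactly the claimed series.

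For (4), take $I=J$ in (1). This is allowed because $J$ is now assumed nilpotent. Then $\mathrm{JL}_n(R)=G(\mathrm{M}_n(J))$ is normal and nilpotent, and $\mathrm{GL}_n(R)/\mathrm{JL}_n(R)\cong\mathrm{GL}_n(R/J)$. The statement writes $\triangleleft$ here, which may indicate a proper normal subgroup. If so, properness follows because $\mathrm{GL}_n(R/J)$ is nontrivial as soon as $J\neq R$. That holds whenever $1\neq 0$ in $R$, since $-1+J$ then gives a nontrivial element of the quotient, except in characteristic-$2$ degenerate cases with $n=1$. I will either handle this remark separately or read $\triangleleft$ as plain normality.

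The only genuinely non-formal input is (b), the identity $J(\mathrm{M}_n(R))=\mathrm{M}_n(J(R))$; everything else is bookkeeping. I will quote (b) as a standard fact (e.g.\ from Lam's book) rather than prove it. If a proof were needed, the route I would take is to show $\mathrm{M}_n(J)$ is quasi-regular and then use the correspondence between two-sided ideals of $\mathrm{M}_n(R)$ and those of $R$, under which every ideal of $\mathrm{M}_n(R)$ has the form $\mathrm{M}_n(K)$.
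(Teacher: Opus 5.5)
Your proposal is correct and is essentially the paper's own argument: the paper likewise applies Lemma~\ref{lem:5.1} to the ring $\mathrm{M}_n(R)$ with the ideals $\mathrm{M}_n(I)$ and $\mathrm{M}_n(J)=J(\mathrm{M}_n(R))$, using $\mathrm{M}_n(R)/\mathrm{M}_n(I)\cong\mathrm{M}_n(R/I)$ and $I\subseteq J$. On your side remark about (4), you should read $\triangleleft$ as plain normality, which is all the paper proves, because properness genuinely fails for, e.g., $R=\mathbb{F}_2$ and $n=1$; also, for $n\ge 2$ with $R/J$ of characteristic $2$ the element $-1$ is trivial, so nontriviality of $\mathrm{GL}_n(R/J)$ there needs a different witness, such as the elementary matrix $1+e_{12}$.
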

The normal subgroup $\mathrm{JL}_n(R)$ of the general linear group $\mathrm{GL}_n(R)$ in Lemma~\ref{lem:5.2} is said to be the \textit{Jacobson linear group} of degree $n$ over a ring $R.$
\begin{lemma}\label{lem:5.3} Let $R$ be a ring, $J=J(R)$ its Jacobson radical, and $n$ a positive integer. Then, the following assertions hold:
\begin{itemize}
\item [(1)] If $R$ has characteristic zero, then $\mathrm{JL}_n(R)$ is torsion-free nilpotent.
\item [(2)] If $R$ has positive characteristic $p$, then $\mathrm{JL}_n(R)$ is locally finite. Furthermore, the group $\mathrm{GL}_n(R)$ is periodic if and only if the group $\mathrm{GL}_n(R/J)$ is periodic.
\end{itemize} 
\end{lemma}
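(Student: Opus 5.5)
The plan is to use the filtration $\mathrm{JL}_n(R)=G(\mathrm{M}_n(J))\supseteq G(\mathrm{M}_n(J)^2)\supseteq\cdots$ from Lemma~\ref{lem:5.1}(3), applied to the ideal $\mathrm{M}_n(J)=J(\mathrm{M}_n(R))$. Both nilpotency and local finiteness of $\mathrm{JL}_n(R)$ really require $J$ to be nilpotent, e.g. $R$ left artinian, which is the setting of this section. Without that hypothesis the group $1+\mathrm{M}_n(J)$ need not be nilpotent at all. So I will work under the hypothesis that $J^k=0$, hence $\mathrm{M}_n(J)^k=0$. Lemma~\ref{lem:5.1}(3) then gives that $\mathrm{JL}_n(R)$ is nilpotent of class at most $k$, with central series $G_r=1+\mathrm{M}_n(J)^r$.

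For (2), fix $x=1+a\in \mathrm{JL}_n(R)$ with $a\in \mathrm{M}_n(J)$, so $a^k=0$. Since $\mathrm{char}\,R=p$ and $1$ commutes with $a$, the Frobenius identity gives $(1+a)^{p^t}=1+a^{p^t}$. Choosing $p^t\ge k$ yields $x^{p^t}=1$, so $\mathrm{JL}_n(R)$ is a periodic $p$-group. A finitely generated subgroup is then a finitely generated periodic nilpotent group, hence finite (\cite[1.3.5]{Bo_Lennox_2004}, or directly from the finitely many finitely generated abelian factors of the lower central series). Therefore $\mathrm{JL}_n(R)$ is locally finite. For the equivalence, Lemma~\ref{lem:5.2}(2) gives $\mathrm{GL}_n(R)/\mathrm{JL}_n(R)\cong\mathrm{GL}_n(R/J)$. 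A quotient of a periodic group is periodic, which gives one direction. Conversely, if $\mathrm{GL}_n(R/J)$ is periodic, then for each $g$ some power $g^m$ lies in the periodic group $\mathrm{JL}_n(R)$, so $g$ has finite order.

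For (1), nilpotency comes from the same Lemma~\ref{lem:5.1}(3). For torsion-freeness, suppose $x=1+a$ with $a\neq 0$ and $x^m=1$. Take $r$ maximal with $a\in \mathrm{M}_n(J)^r$. Expanding binomially, $(1+a)^m\equiv 1+ma \pmod{\mathrm{M}_n(J)^{r+1}}$, so $ma\in \mathrm{M}_n(J)^{r+1}$. This is the step I expect to be the main obstacle. It forces $a\in\mathrm{M}_n(J)^{r+1}$, contradicting the choice of $r$, only if the factor $J^r/J^{r+1}$ has no $m$-torsion; this holds when $R$ is additively torsion-free, e.g. a $\mathbb{Q}$-algebra. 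Characteristic zero alone does not suffice: in $\mathbb{Z}/4\times\mathbb{Q}$, which has characteristic zero, $1+(2,0)$ has order $2$, and the $2$-adic integers give $-1\in 1+2\mathbb{Z}_2$. I would therefore first try to establish (1) under the additional assumption that $(R,+)$ is torsion-free. In that case the argument above closes, with a clean alternative being the truncated $\log(1+a)=\sum_{i<k}(-1)^{i+1}a^i/i$ when $R$ is a $\mathbb{Q}$-algebra. I would flag that this hypothesis is needed for the statement as written.
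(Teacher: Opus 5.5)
Your diagnosis is correct: the statement is false as written, and the paper's proof quietly uses the hypotheses you add.

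\textbf{Part (2).} Your argument is the paper's argument. The paper also writes $x=1+a$ and uses $(1+a)^{p^s}=1+a^{p^s}=1$. It gets $a^{p^s}=0$ by assuming $(\mathrm{M}_n(J))^m=0$ for some $m$, which is exactly the nilpotency of $J$ you make explicit. It then deduces local finiteness from Proposition~\ref{pro:4.8} rather than from finiteness of finitely generated periodic nilpotent groups, and argues the periodicity equivalence exactly as you do. Without nilpotency, (2) fails: for $R=\mathbb{F}_p[[t]]$ one has $J=tR$, and $\mathrm{GL}_1(R/J)=\mathbb{F}_p^{*}$ is periodic, but $1+t\in\mathrm{JL}_1(R)$ has infinite order.

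\textbf{Part (1): the paper's argument.} The paper sets $b=\frac1m\sum_{i\ge 2}\binom{m}{i}a^{i-1}$ (printed with $\frac1n$) and gets $ma(1+b)=0$, hence $ma=0$. It then appeals to ``characteristic zero'' to conclude $a=0$. Both steps need $R$ to be a $\mathbb{Q}$-algebra, and the nilpotency of $\mathrm{JL}_n(R)$ is only asserted. Your counterexamples $\mathbb{Z}/4\times\mathbb{Q}$ and $\mathbb{Z}_2$ are valid. The first is artinian, so (1) fails even in this section's intended setting. In a $\mathbb{Q}$-algebra the paper's unit trick actually proves torsion-freeness with no nilpotency assumption, since $1+b\in 1+\mathrm{M}_n(J)$ is invertible for any $J$.

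\textbf{The gap in your Part (1).} You claim that additive torsion-freeness of $R$ rules out $m$-torsion in $\mathrm{M}_n(J)^r/\mathrm{M}_n(J)^{r+1}$. This is false. Take $R=\mathbb{Z}+2t\mathbb{Z}+t^2\mathbb{Z}\subset\mathbb{Z}[t]/(t^3)$.
\begin{itemize}
\item $(R,+)$ is torsion-free.
\item $J(R)=2t\mathbb{Z}+t^2\mathbb{Z}$, since this is a nilpotent ideal with quotient $\mathbb{Z}$.
\item $J^2=4t^2\mathbb{Z}$.
\end{itemize}
For $a=t^2$ you get $r=1$ and $4a\in J^2$, yet $a\notin J^2$. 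So the implication $ma\in\mathrm{M}_n(J)^{r+1}\Rightarrow a\in\mathrm{M}_n(J)^{r+1}$, on which your argument rests, fails. The conclusion still holds in this ring, because $R$ sits inside a $\mathbb{Q}$-algebra; it is only the step that breaks.

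\textbf{How to repair it.} Your filtration argument and your truncated logarithm are both fine for $\mathbb{Q}$-algebras. For an additively torsion-free $R$ with $J$ nilpotent, filter by powers of $a$ instead. If $a^s=0\neq a^{s-1}$ with $s\ge 2$, multiply $\sum_{i=1}^{m}\binom{m}{i}a^i=0$ by $a^{s-2}$ to get $ma^{s-1}=0$. This contradicts additive torsion-freeness of $\mathrm{M}_n(R)$. Alternatively, embed $R$ in $R\otimes_{\mathbb{Z}}\mathbb{Q}$ and use the $\mathbb{Q}$-algebra case.
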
 
\begin{proof} (1) Assume that $x\in \mathrm{JL}_n(R)$ is a non-trivial torsion element of order $m\geq 1.$ Write $x=1+a,$ where $0\ne a\in \mathrm{M}_n(J)$. Then, we have $$1=x^m=(1+a)^m=1+ma+\sum_{i=2}^{m}\mathrm{C}_m^ia^i.$$ Pick $b=\displaystyle\frac{1}{n}\sum_{i=2}^{m}\mathrm{C}_m^ia^{i-1}\in \mathrm{M}_n(J)$, and therefore $1+b\in \mathrm{JL}_n(R).$ Observe that $$ma(1+b)=ma+\sum_{i=2}^{m}\mathrm{C}_m^ia^i=0.$$ Since $1+b\in \mathrm{JL}_n(R)$, it follows that $ma=0.$ We also observe that the matrix ring $\mathrm{M}_n(R)$ has characteristic zero. This shows that $a=0$, a contradiction. Thus, $\mathrm{JL}_n(R)$ is torsion-free. Hence, $\mathrm{JL}_n(R)$ is a torsion-free nilpotent group.  
	
(2) Consider an arbitrary element $x=1+a$ ($a\in \mathrm{M}_n(J)$) in the group $\mathrm{JL}_n(R)$. Clearly, we can choose some positive integer $s$ such that $a^{p^s}=0$. Indeed, assume that $m$ is a positive integer such that $(\mathrm{M}_n(J))^m=0$. To get $a^{p^s}=0$, it suffices to take any integer $s$ such that $p^s>m$. With such an integer $s$, we have  $$x^{p^s}=(1+a)^{p^s}=1+a^{p^s}=1,$$ which implies that $x$ is a periodic element. Hence, we conclude that $\mathrm{JL}_n(R)$ is a periodic nilpotent group, and so, it follows from Proposition~\ref{pro:4.8} that $\mathrm{JL}_n(R)$ is locally finite. Now, suppose that the general linear group $\mathrm{GL}_n(R)$ is periodic. Since $$\mathrm{GL}_n(R)/\mathrm{JL}_n(R)\cong \mathrm{GL}_n(R/J),$$ it follows that $\mathrm{GL}_n(R/J)$ is periodic. Conversely, assume that $\mathrm{GL}_n(R/J)$ is a periodic group. Then, $\mathrm{GL}_n(R)/\mathrm{JL}_n(R)$ is periodic. Consider an arbitrary element $z\in \mathrm{GL}_n(R)$. Then, $z\mathrm{JL}_n(R)\in \mathrm{GL}_n(R)/\mathrm{JL}_n(R)$ is a periodic element, say of order $k\ge 1$, and we have $z^k\mathrm{JL}_n(R)=(z\mathrm{JL}_n(R))^k=\mathrm{JL}_n(R)$, which implies that $z^k\in \mathrm{JL}_n(R)$. Since $\mathrm{JL}_n(R)$ is locally finite, it follows that $z^k$ is  periodic, and so $z$ is periodic, too. Hence, the group $\mathrm{GL}_n(R)$ is periodic, and the proof of the lemma is now complete.	
\end{proof}	
\begin{lemma}\label{lem:5.4} Let $R$ be a left artinian ring, $J=J(R)$ its Jacobson radical, and $n$ a positive integer. Then, there exist positive integers $n_1, n_2,\ldots, n_r$ and division rings $D_1, D_2,\ldots, D_r$ such that  $$\mathrm{M}_n(R/J)\cong \prod_{i=1}^{r}\mathrm{M}_{n_i}(D_i).$$ Moreover, $\mathrm{JL}_n(R)$ is a nilpotent normal subgroup of the group $\mathrm{GL}_n(R),$ and $$\mathrm{GL}_n(R)/\mathrm{JL}_n(R)\cong\mathrm{GL}_n(R/J)\cong \prod_{i=1}^{r}\mathrm{GL}_{n_i}(D_i).$$
\end{lemma}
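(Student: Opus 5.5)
The plan has three steps: use the Wedderburn--Artin theorem for $R/J$, transfer it to the matrix ring $\mathrm{M}_n(R/J)$, and then read off the group statements from Lemma~\ref{lem:5.2}.

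\emph{Structure of $\mathrm{M}_n(R/J)$.} Since $R$ is left artinian, $J=J(R)$ is a nilpotent ideal and $R/J$ is semisimple artinian. By the Wedderburn--Artin theorem,
$$R/J\cong \prod_{j=1}^{r}\mathrm{M}_{m_j}(D_j)$$
for suitable division rings $D_j$ and positive integers $m_j$. Taking $n\times n$ matrices commutes with finite direct products of rings, and $\mathrm{M}_n(\mathrm{M}_{m}(D))\cong\mathrm{M}_{nm}(D)$. Hence
$$\mathrm{M}_n(R/J)\cong\prod_{j=1}^{r}\mathrm{M}_{n m_j}(D_j).$$
We put $n_i=nm_i$, which gives the first assertion. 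Alternatively, $\mathrm{M}_n(R/J)$ is itself semisimple artinian, being Morita equivalent to $R/J$, so Wedderburn--Artin applies to it directly.

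\emph{The group statements.} Because $J$ is nilpotent, we may take $I=J$ in Lemma~\ref{lem:5.2}(1). Then $G(\mathrm{M}_n(J))=\mathrm{JL}_n(R)$ is a nilpotent normal subgroup of $\mathrm{GL}_n(R)$, with $\mathrm{GL}_n(R)/\mathrm{JL}_n(R)\cong\mathrm{GL}_n(R/J)$. The nilpotency comes from Lemma~\ref{lem:5.1}(3) applied to the nilpotent ideal $\mathrm{M}_n(J)$ of $\mathrm{M}_n(R)$; if $J^k=0$ then $\mathrm{M}_n(J)^k=0$. Finally, taking unit groups of the ring isomorphism from the first step, and using that the units of a direct product are the direct product of the units, gives
$$\mathrm{GL}_n(R/J)=\mathrm{M}_n(R/J)^\times\cong\prod_{i=1}^{r}\mathrm{M}_{n_i}(D_i)^\times=\prod_{i=1}^{r}\mathrm{GL}_{n_i}(D_i).$$

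\emph{Main point to justify.} The only non-formal inputs are two standard facts about artinian rings: that $J(R)$ is nilpotent, and that $R/J$ is semisimple. Everything else is a direct application of Lemmas~\ref{lem:5.1} and~\ref{lem:5.2}, together with the bookkeeping isomorphism $\mathrm{M}_n(\prod_j A_j)\cong\prod_j\mathrm{M}_n(A_j)$.
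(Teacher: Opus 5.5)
Your proposal is correct and follows the paper's proof essentially step for step. Both arguments apply Wedderburn--Artin to $R/J$, use the isomorphisms $\mathrm{M}_n(\prod_j \mathrm{M}_{m_j}(D_j))\cong\prod_j\mathrm{M}_{nm_j}(D_j)$ and pass to unit groups, then invoke Lemma~\ref{lem:5.2} with $I=J$ (nilpotent because $R$ is left artinian) to get the nilpotent normal subgroup $\mathrm{JL}_n(R)$ and the quotient isomorphism; your remark that $J^k=0$ forces $\mathrm{M}_n(J)^k=0$ only spells out a step the paper leaves implicit.
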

\begin{proof} Since $R$ is a left artinian ring, $J=J(R)$ is a nilpotent ideal of $R$, and the factor ring $R/J$ is semisimple. By the Wedderburn-Artin Theorem, there exist positive integers $t_1,t_2,\ldots, t_r$ and division rings $D_1, D_2,\ldots, D_r$ such that $$R/J\cong\prod_{i=1}^{r}\mathrm{M}_{t_i}(D_i).$$ Clearly, $$\mathrm{M}_n(R/J)\cong\prod_{i=1}^{r}\mathrm{M}_n(\mathrm{M}_{t_i}(D_i))\cong\prod_{i=1}^{r}\mathrm{M}_{nt_i}(D_i)=\prod_{i=1}^{r}\mathrm{M}_{n_i}(D_i),$$ where $n_i=nt_i$. It follows that $\mathrm{GL}_n(R/J)\cong \prod_{i=1}^{r}\mathrm{GL}_{n_i}(D_i).$ On the other hand, according to Lemma~\ref{lem:5.2}, $\mathrm{JL}_n(R)$ is a nilpotent normal subgroup of the general linear group $\mathrm{GL}_n(R),$ and $\mathrm{GL}_n(R)/\mathrm{JL}_n(R)\cong\mathrm{GL}_n(R/J).$ The lemma is proved.
\end{proof}
To proceed further, we need some obvious results on periodic groups, and on locally finite groups.  For the convenience of use, let us record them in the following proposition.
\begin{proposition}\label{pro:5.5} Let $G_1, G_2,\ldots, G_r$ be groups and $G=G_1\times G_2\times\cdots\times G_r.$ Then, the following statements hold:
	\begin{itemize}
		\item [(1)] $G$ is periodic if and only if all $G_i$ are periodic.
		\item [(2)] $G$ is locally finite if and only if all $G_i$ are locally finite.	
	\end{itemize}	
\end{proposition}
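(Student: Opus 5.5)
Both assertions reduce to the projections onto the factors, so I will prove them in parallel. Write $\pi_i\colon G\to G_i$ for the $i$-th coordinate projection and $\iota_i\colon G_i\to G$ for the canonical embedding $g\mapsto(1,\ldots,g,\ldots,1)$. In each part the forward direction is the statement that the property passes to subgroups: $G_i\cong\iota_i(G_i)\le G$. A subgroup of a periodic group is periodic. A finitely generated subgroup of $G_i$ maps to a finitely generated subgroup of $G$, which is finite, so $G_i$ is locally finite.

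\textbf{Converse of (1).} Take $g=(g_1,\ldots,g_r)\in G$ and let $m_i$ be the order of $g_i$. Put $m=\operatorname{lcm}(m_1,\ldots,m_r)$. Since multiplication in $G$ is componentwise, $g^m=(g_1^m,\ldots,g_r^m)=1$, so $g$ has finite order and $G$ is periodic.

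\textbf{Converse of (2).} Let $H=\langle h^{(1)},\ldots,h^{(s)}\rangle$ be a finitely generated subgroup of $G$. For each $i$, the image $\pi_i(H)$ is generated by the finitely many elements $\pi_i(h^{(j)})$. By hypothesis $G_i$ is locally finite, so $\pi_i(H)$ is finite. The map $h\mapsto(\pi_1(h),\ldots,\pi_r(h))$ is injective, so
$$H\le \pi_1(H)\times\cdots\times\pi_r(H),$$
and the right-hand side is a finite group. Hence $H$ is finite and $G$ is locally finite.

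An alternative for (2), by induction on $r$, is to note that $G$ is an extension of the locally finite group $G_1$ by the locally finite group $G_2\times\cdots\times G_r$, and extensions of locally finite groups are locally finite. The direct embedding argument above is more elementary, so I prefer it. I do not expect any step to be a real obstacle. The only point needing care is that $H$ is contained in the product of its projections, which uses that $r$ is finite so that this product is finite.
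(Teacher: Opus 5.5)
Your proof is correct and complete. The paper gives no proof here: it records the proposition as an ``obvious'' fact for later use, and your argument supplies the standard verification. That verification consists of the embeddings $\iota_i$ for the forward directions, the $\operatorname{lcm}$ of the component orders for (1), and the containment $H\le\pi_1(H)\times\cdots\times\pi_r(H)$ of a finitely generated subgroup into a finite product for (2).
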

\begin{proposition}\label{pro:5.6} Let $R$ be a left artinian ring, $J=J(R)$ its Jacobson radical,  and $n$ a positive integer. If the general linear group $\mathrm{GL}_n(R)$ is periodic, then $\mathrm{GL}_n(R)$ is locally finite. Moreover, there exist locally finite fields $F_1, F_2, \ldots, F_r,$ and positive integers $n_1, n_2,\ldots, n_r$ such that $$\mathrm{M}_n(R/J)\cong \mathrm{M}_{n_1}(F_1)\times \mathrm{M}_{n_2}(F_2)\times\cdots\times \mathrm{M}_{n_r}(F_r).$$
\end{proposition}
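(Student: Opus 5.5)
The plan is to pass to the semisimple quotient, identify each factor there as a general linear group over a locally finite field, and lift local finiteness back through the Jacobson linear group. Suppose $\mathrm{GL}_n(R)$ is periodic.

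\emph{Step 1: reduce to the quotient.} By Lemma~\ref{lem:5.4}, the quotient $\mathrm{GL}_n(R)/\mathrm{JL}_n(R)$ is isomorphic to $\mathrm{GL}_n(R/J)\cong\prod_{i=1}^r\mathrm{GL}_{n_i}(D_i)$, where each $D_i$ is a division ring and $n_i=nt_i$. This quotient is periodic. By Proposition~\ref{pro:5.5}(1), each $\mathrm{GL}_{n_i}(D_i)$ is periodic.

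\emph{Step 2: each $D_i$ is a locally finite field.}
\begin{itemize}
\item If $n_i\ge 2$, Lemma~\ref{lem:3.1}(1) says $\mathrm{GL}_{n_i}(D_i)$ is not periodic unless $D_i$ is a locally finite field. Hence $D_i$ is a locally finite field.
\item If $n_i=1$, the group $D_i^*$ is periodic. Viewing $D_i^*$ as a (trivially) almost subnormal subgroup of itself, Theorem~\ref{th:3.2} with $n=1$ gives that $D_i^*$ is central, so $D_i=F_i$ is a field. Every nonzero element of $F_i$ is then a root of unity, so $F_i$ has positive characteristic. It is algebraic over its prime field $\mathbb{F}_p$, hence locally finite. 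Characteristic $0$ is excluded because $2$, or $1/2$, would have to be a root of unity.
\end{itemize}
This gives the required decomposition $\mathrm{M}_n(R/J)\cong\prod_i\mathrm{M}_{n_i}(F_i)$.

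\emph{Step 3: local finiteness of $\mathrm{GL}_n(R)$.}
\begin{itemize}
\item Each $\mathrm{GL}_{n_i}(F_i)$ is locally finite: a finitely generated subgroup has entries in a finitely generated, hence finite, subfield (see the Remark after Lemma~\ref{lem:3.1}). By Proposition~\ref{pro:5.5}(2), $\mathrm{GL}_n(R/J)$ is locally finite.
\item It remains to show that $\mathrm{JL}_n(R)$ is locally finite; then $\mathrm{GL}_n(R)$ is an extension of a locally finite group by a locally finite group, hence locally finite.
\item The main obstacle is that Lemma~\ref{lem:5.3}(2) needs $R$ to have positive characteristic, which is not given a priori. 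I would argue as follows. $\mathrm{JL}_n(R)$ is a subgroup of the periodic group $\mathrm{GL}_n(R)$, so it is periodic. By Lemma~\ref{lem:5.4} it is nilpotent, so it is a periodic nilpotent group. Such a group is locally finite, by Proposition~\ref{pro:4.8} or directly, since finitely generated periodic nilpotent groups are finite.
\end{itemize}
This bypasses the characteristic issue entirely. Alternatively, Lemma~\ref{lem:5.3}(1) shows that in characteristic zero $\mathrm{JL}_n(R)$ is torsion-free, hence trivial when periodic, which is consistent with the above.
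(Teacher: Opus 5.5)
Your proof is correct and follows the paper's proof step for step: you pass to $\mathrm{GL}_n(R)/\mathrm{JL}_n(R)\cong\prod_i\mathrm{GL}_{n_i}(D_i)$ via Lemma~\ref{lem:5.4}, use Lemma~\ref{lem:3.1} when $n_i\ge 2$, and get local finiteness of the periodic nilpotent group $\mathrm{JL}_n(R)$ from Proposition~\ref{pro:4.8}, which is also how the paper avoids Lemma~\ref{lem:5.3}(2). The only variations are that for $n_i=1$ the paper cites Jacobson's commutativity theorem where you cite Theorem~\ref{th:3.2}, and that your closing aside should be dropped: it is not needed, and Lemma~\ref{lem:5.3}(1) fails for general rings of characteristic zero (for the artinian ring $R=\mathbb{Q}\times\mathbb{F}_p[x]/(x^2)$ of characteristic zero, the element $(1,1+x)\in\mathrm{JL}_1(R)$ has order $p$).
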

\begin{proof} By Lemma~\ref{lem:5.4}, there exist positive integers $n_1, n_2,\ldots, n_r$ and division rings $D_1, D_2,\ldots, D_r$ such that $$\mathrm{M}_n(R/J)\cong \prod_{i=1}^{r}\mathrm{M}_{n_i}(D_i).$$
Moreover, $\mathrm{JL}_n(R)$ is a nilpotent normal subgroup of the group $\mathrm{GL}_n(R),$ and $$\mathrm{GL}_n(R)/\mathrm{JL}_n(R)\cong \prod_{i=1}^{r}\mathrm{GL}_{n_i}(D_i).$$ We can identify the group $\mathrm{GL}_n(R)/\mathrm{JL}_n(R)$ with $\prod_{i=1}^{r}\mathrm{GL}_{n_i}(D_i),$ and write $$\mathrm{GL}_n(R)/\mathrm{JL}_n(R)=\prod_{i=1}^{r}\mathrm{GL}_{n_i}(D_i).$$
Now, if the group $\mathrm{GL}_n(R)$ is periodic, then so is the quotient group $\mathrm{GL}_n(R)/\mathrm{JL}_n(R).$ Then, according to Proposition~\ref{pro:5.5}, all the general skew linear groups $\mathrm{GL}_{n_i}(D_i)$ is periodic. We shall prove that $D_i$ is a locally finite field. Indeed, If $n_i=1,$ then, by Jacobson's Theorem \cite[Theorem 3.9.5]{Bo_Cohn_1995}, $D_i$ is a field. If $D_i$ has characteristic zero, then $D$ contains the field $\mathbb{Q}$ of rational numbers. Since $D_i^*$ is periodic, the multiplicative group $\mathbb{Q}^*$ is periodic, a contradiction. Hence, $D_i$ is a field of prime characteristic, say, $p_i.$ Let $\mathbb{F}_{p_i}$ be the prime subfield of $D_i$, and $S_i=\{a_{1i},a_{2i},\ldots,a_{ki}\}$ a finite subset of $D_i.$ Then, $\mathbb{F}_{p_i}(a_{1i},a_{2i},\ldots,a_{ki})$ is an algebraic extension of $\mathbb{F}_{p_i}.$ Hence, $\mathbb{F}_{p_i}(a_{1i},a_{2i},\ldots,a_{ki})$ is a finite extension of $\mathbb{F}_{p_i}$; that is, $[\mathbb{F}_{p_i}(a_{1i},a_{2i},\ldots,a_{ki}):\mathbb{F}_{p_i}]$ is finite, which  implies that $\mathbb{F}_{p_i}(a_{1i},a_{2i},\ldots,a_{ki})$ is a finite field. Hence, $D_i$ is a locally finite field. Now, we consider the case when $n_i\geq 2.$ Then, according to Lemma~\ref{lem:3.1}, $D_i=F_i$ must be a locally finite field. Hence, $\mathrm{GL}_n(F_i)$ is locally finite. Again by Proposition~\ref{pro:5.5}, $\mathrm{GL}_n(R)/\mathrm{JL}_n(R)$ is locally finite. On the other hand, observe that $\mathrm{JL}_n(R)$ is periodic nilpotent. Hence, according to Proposition~\ref{pro:4.8}, $\mathrm{JL}_n(R)$ is locally finite. Being an extension of a locally finite group by a locally finite group, the group $\mathrm{GL}_n(R)$ is locally finite. 
\end{proof}
The following result shows in particular that the GBP is solved positively for almost subnormal subgroups of the general linear group over left artinian rings.
\begin{theorem}\label{th:5.7} Let $R$ be a left artinian ring and $n$ a positive integer. Assume that $G$ is an almost subnormal subgroup of the general linear group $\mathrm{GL}_n(R).$ Then, the following assertions hold:
\begin{itemize}
\item [(1)] If $G$ is periodic, then $G$ is locally finite.
\item [(2)] If $G$ is locally generalized radical, then $G$ is (nilpotent-by-abelian)-by-(locally finite). Additionally, if the group $G$ is finitely generated, then $H$ is (nilpotent-by-polycyclic)-by-finite. Consequently, $G$ contains a nilpotent normal subgroup $N$ such that the quotient group $G/N$ can be embedded into the general linear group $\mathrm{GL}_m(\mathbb{Z})$ for some $m\geq 1.$
\end{itemize}	
\end{theorem}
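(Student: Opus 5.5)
The plan is to reduce everything modulo the Jacobson linear group $\mathrm{JL}_n(R)$. By Lemma~\ref{lem:5.4}, $N_0:=\mathrm{JL}_n(R)$ is a nilpotent normal subgroup of $\mathrm{GL}_n(R)$, and
$$\mathrm{GL}_n(R)/N_0\cong \prod_{i=1}^{r}\mathrm{GL}_{n_i}(D_i).$$
Put $N=G\cap N_0$. Then $N$ is nilpotent and normal in $G$, and $G/N\cong GN_0/N_0$. Almost subnormality passes to images under epimorphisms, so $\overline{G}=GN_0/N_0$ is almost subnormal in the product. Let $\pi_i$ be the $i$-th projection. Then each $G_i=\pi_i(\overline G)$ is almost subnormal in $\mathrm{GL}_{n_i}(D_i)$, and $\overline G$ embeds in $\prod_i G_i$. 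It therefore suffices to understand each $G_i$ and then lift the information through the nilpotent kernel $N$.

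For (1), suppose $G$ is periodic. Then each $G_i$ is periodic. If $n_i=1$, or if $n_i\ge 2$ and $D_i$ is not a locally finite field, Theorem~\ref{th:3.2} makes $G_i$ central in $\mathrm{GL}_{n_i}(D_i)$, hence a periodic subgroup of the field $Z(D_i)^*$. Periodic subgroups of the multiplicative group of a field are locally cyclic, hence locally finite. If instead $D_i$ is a locally finite field, then $\mathrm{GL}_{n_i}(D_i)$ is itself locally finite. In every case $G_i$ is locally finite, so by Proposition~\ref{pro:5.5} $\prod_i G_i$ and hence $G/N$ are locally finite. The subgroup $N$ is periodic and nilpotent, so it is locally finite by Proposition~\ref{pro:4.8}. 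An extension of a locally finite group by a locally finite group is locally finite, so $G$ is locally finite.

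For (2), suppose $G$ is locally generalized radical. Then each $G_i$ is too, as a homomorphic image. If $n_i=1$, or if $n_i\ge2$ and $D_i$ is not a locally finite field, \cite[Theorem 3.3]{Pa_ChuaHai_2026} and Theorem~\ref{th:2.5} give that $G_i$ is central, hence abelian. Otherwise $G_i$ lies in $\mathrm{GL}_{n_i}(F_i)$ with $F_i$ a locally finite field, and so is locally finite. Reorder so that the first $s$ factors are of the first kind. Let $K$ be the kernel of $\overline G\to\prod_{i>s}G_i$. Then $\overline G/K$ is locally finite, and $K$ embeds in $\prod_{i\le s}G_i$, so $K$ is abelian. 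Pulling $K$ back to $G$ gives a normal subgroup $H\ge N$ with $H/N$ abelian and $G/H$ locally finite. This is exactly the (nilpotent-by-abelian)-by-(locally finite) structure.

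When $G$ is finitely generated, $G/H$ is finitely generated and locally finite, hence finite. Thus $H$ has finite index and is finitely generated. Then $H/N$ is a finitely generated abelian group, hence polycyclic, and $H$ is nilpotent-by-polycyclic; this gives the claim about "$H$", which in the statement should read $G$.

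The main obstacle is the final embedding of $G/N$ into some $\mathrm{GL}_m(\mathbb Z)$. The plan is to use that polycyclic-by-finite groups are $\mathbb Z$-linear (Auslander--Swan). For that, $G/N$ itself must be polycyclic-by-finite, and this requires $H/N$ finitely generated abelian, which holds, and $G/H$ finite, which holds. So $G/N$ is (finitely generated abelian)-by-finite, hence polycyclic-by-finite, and Auslander--Swan embeds it in $\mathrm{GL}_m(\mathbb Z)$. I would also double-check that almost subnormality really descends to the projections $G_i$ and is preserved under the passage to $\overline G$. This is the step where the definitions, rather than the algebra, need care.
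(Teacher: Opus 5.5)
Your proposal is correct and follows the paper's proof essentially step by step. Like the paper, you reduce modulo the nilpotent normal subgroup $\mathrm{JL}_n(R)$, project onto the factors $\mathrm{GL}_{n_i}(D_i)$, and invoke Theorem~\ref{th:3.2} (respectively Theorem~\ref{th:2.5} and \cite[Theorem 3.3]{Pa_ChuaHai_2026}) outside the locally-finite-field case; you then conclude with Proposition~\ref{pro:4.8}, finite generation, and Auslander--Swan. Your explicit construction of $H$ as the preimage of the kernel of $\overline{G}\to\prod_{i>s}G_i$ is a slightly more careful version of the paper's remark that a subgroup of a finite product of abelian-by-(locally finite) groups is again abelian-by-(locally finite).
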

\begin{proof} By Lemma~\ref{lem:5.4}, $\mathrm{JL}_n(R)$ is a nilpotent normal subgroup of $\mathrm{GL}_n(R),$ and $$\mathrm{GL}_n(R)/\mathrm{JL}_n(R)= \prod_{i=1}^{r}\mathrm{GL}_{n_i}(D_i)$$ for suitable positive integers $n_1, n_2,\ldots, n_r$ and division rings $D_1, D_2,\ldots, D_r.$ 
	
	Assume that $G$ is a subgroup of the group $\mathrm{GL}_n(R).$ Then, $$G/(G\cap \mathrm{JL}_n(R))\cong G\cdot\mathrm{JL}_n(R)/\mathrm{JL}_n(R)$$ is a subgroup of $\mathrm{GL}_n(R)/\mathrm{JL}_n(R).$ Pick $N=G\cap \mathrm{JL}_n(R)$ and $L=G\cdot\mathrm{JL}_n(R)/\mathrm{JL}_n(R).$ Then, $N$ is normal nilpotent in $G$ and $G/N\cong L.$ Consider the projections $$\pi_i: \mathrm{GL}_n(R)/\mathrm{JL}_n(R)\longrightarrow \mathrm{GL}_{n_i}(D_i).$$ 
	
(1) Assume that $G$ is periodic almost subnormal in $\mathrm{GL}_n(R).$ Then, $L$ is periodic almost subnormal in $\mathrm{GL}_n(R)/\mathrm{JL}_n(R).$ Being the homomorphic image of a periodic almost subnormal subgroup, $\pi_i(L)$ is a periodic almost subnormal subgroup of the general skew linear group $\mathrm{GL}_{n_i}(D_i).$ We claim that $\pi_i(L)$ is locally finite. Indeed, if either $n_i=1$ or $D_i$ is not a locally finite field in case $n_i\geq 2,$ then, according to Theorem~\ref{th:3.2}, $\pi_i(L)$ is central. Hence, $\pi_i(L)$ is a periodic abelian group, and so $\pi_i(L)$ is locally finite. Now, we consider the case that $n_i\geq 2$ and $D_i$ is a locally finite field. Assume that $M_i$ is a finitely generated subgroup of $\pi_i(L)$ that is generated by matrices $A_1, \dots, A_k\in \mathrm{GL}_n(D_i)$. Denote by $K_i$ the subfield of $D_i$ generated by all the entries of all these matrices over the simple subfield $\mathbb{F}_{p_i}$ of $D_i$, where $p_i$ is the characteristic of $D_i$. Then, $K_i$ is a finite field and $M_i$ is a subgroup of $\mathrm{GL}_{n_i}(K_i)$, which implies that $M_i$ is finite, and so $\pi_i(L)$ is locally finite. Being the finite direct product of locally finite groups, $P=\prod_{i=1}^{r} \pi_i(L)$ is locally finite (see Proposition~\ref{pro:5.5}). Since $L$ is isomorphic to the subgroup of $P,$ it follows that $L$ is locally finite. This implies that $G/N$ is locally finite. On the other hand, we observe that $N$ is periodic nilpotent. By Proposition~\ref{pro:4.8}, $N$ is locally finite. Now, since  $G/N$ and $N$ both are locally finite, it follows that $G$ is locally finite. 
	
(2) If $G$ is locally generalized radical almost subnormal in $\mathrm{GL}_n(R),$ then $\pi_i(L)$ is locally generalized radical almost subnormal in $\mathrm{GL}_{n_i}(D_i).$ If $n_i=1$ or $D_i$ is not a locally finite field in case $n_i\geq 2$, then $\pi_i(L)$ is central by Theorem~\ref{th:2.5} and~\cite[Theorem 3.3]{Pa_ChuaHai_2026}. If $D_i$ is a locally finite field, then clearly that $\pi_i(L)$ is locally finite. Hence, we conclude that $\pi_i(L)$ is abelian-by-(locally finite) for all $i.$ It is easy to see that the direct product $Q=\prod_{i=1}^{r} \pi_i(L)$ is abelian-by-(locally finite). Since $L$ is isomorphic to the subgroup of $Q,$ it follows that $L$ is abelian-by-(locally finite). This implies that the quotient group $G/N$ is abelian-by-(locally finite). Since $N$ is nilpotent, $G$ is (nilpotent-by-abelian)-by-(locally finite). If $G$ is finitely generated, then $G/N$ is finitely generated (abelian-by-finite). Since every finitely generated abelian group is polycyclic, it follows that $G/N$ is polycyclic-by-finite. This implies that $G$ is (nilpotent-by-polycyclic)-by-finite. Finally, according to the Auslander-Swan Theorem (see e.g.,~\cite[3.3.1]{Bo_Lennox_2004}), the quotient group $G/N$ can be embedded into the general linear group $\mathrm{GL}_n(\mathbb{Z})$ for some $m\geq 1.$ The theorem is proved.
\end{proof}
\begin{theorem}\label{th:5.8} Let $R$ be a left artinian ring, and $n$ a positive integer. Assume that $G$ is an ascendant subgroup of the general linear group $\mathrm{GL}_n(R).$ If $G$ is generalized radical, then $G$ is (nilpotent-by-abelian)-by-(locally finite). Additionally, if $G$ is finitely generated, then $G$ is (nilpotent-by-polycyclic)-by-finite. Consequently, $G$ contains a nilpotent normal subgroup $N$ such that the quotient group $G/N$ can be embedded into the general linear group $\mathrm{GL}_m(\mathbb{Z})$ for some $m\geq 1.$
\end{theorem}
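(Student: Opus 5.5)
We follow the proof of Theorem~\ref{th:5.7}(2), replacing the almost subnormal results on division rings by their ascendant analogues. By Lemma~\ref{lem:5.4}, $\mathrm{JL}_n(R)$ is a nilpotent normal subgroup of $\mathrm{GL}_n(R)$ and $\mathrm{GL}_n(R)/\mathrm{JL}_n(R)=\prod_{i=1}^{r}\mathrm{GL}_{n_i}(D_i)$. Set $N=G\cap \mathrm{JL}_n(R)$, which is nilpotent and normal in $G$, and $L=G\mathrm{JL}_n(R)/\mathrm{JL}_n(R)\cong G/N$. Let $\pi_i$ be the projections onto $\mathrm{GL}_{n_i}(D_i)$. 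Ascendancy is preserved under surjective homomorphisms: the image of an ascending series is again an ascending series, with normality kept at successor steps and unions at limit steps. Moreover, $\pi_i$ restricted to $\mathrm{GL}_n(R)$ is surjective onto $\mathrm{GL}_{n_i}(D_i)$. Hence each $\pi_i(L)$ is an ascendant subgroup of $\mathrm{GL}_{n_i}(D_i)$, and it is generalized radical, being a homomorphic image of the generalized radical group $G$.

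Next we show that each $\pi_i(L)$ is abelian-by-(locally finite), splitting into cases.

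\emph{Case $n_i=1$.} Here $\pi_i(L)$ is a generalized radical ascendant subgroup of $D_i^*$, so it is central by \cite[Theorem 3.8]{Pa_ChuaHai_2026}, which is the result already used in Theorems~\ref{th:4.3} and \ref{th:4.7}.

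\emph{Case $n_i\ge 2$ and $D_i$ a locally finite field.} Then $\mathrm{GL}_{n_i}(D_i)$ is locally finite, so $\pi_i(L)$ is locally finite.

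\emph{Case $n_i\ge 2$ and $D_i$ not a locally finite field.} This is the main obstacle. We need that a generalized radical ascendant subgroup of $\mathrm{GL}_{n_i}(D_i)$ is central. We would first look for this among the results of \cite{Pa_ChuaHai_2026} on ascendant skew linear groups of degree $\ge 2$. If no such result is available, we argue as follows. Let $H=\pi_i(L)$ be non-central and ascendant, with ascending series $H=H_0\trianglelefteq H_1\trianglelefteq\cdots$ reaching $\mathrm{GL}_{n_i}(D_i)$. By the standard normal subgroup theorem for $\mathrm{GL}_{n_i}(D_i)$ (excluding the small exceptions, which are covered by the locally finite case), any non-central normal subgroup contains $\mathrm{SL}_{n_i}(D_i)$. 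Descending along the series, one shows by transfinite induction that each non-central term contains $\mathrm{SL}_{n_i}(D_i)$. The key point is that $\mathrm{SL}_{n_i}(D_i)$ is perfect, and its normal subgroups are central or equal to it. Consequently, if $\mathrm{SL}_{n_i}(D_i)\le H_{\alpha+1}$ and $H_\alpha\trianglelefteq H_{\alpha+1}$ is non-central, then $H_\alpha\cap \mathrm{SL}_{n_i}(D_i)$ is normal in $\mathrm{SL}_{n_i}(D_i)$, and a commutator argument shows it is non-central, hence equal to $\mathrm{SL}_{n_i}(D_i)$. Limit ordinals need care: there the series is an increasing union, so a descending argument must be replaced by considering the least ordinal at which $\mathrm{SL}_{n_i}(D_i)$ is contained. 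Once $\mathrm{SL}_{n_i}(D_i)\le H$, the group $\mathrm{SL}_{n_i}(D_i)$ would be generalized radical. This is impossible, since it contains a non-cyclic free subgroup by \cite[4.5.1]{Pa_ShWe_86}, while generalized radical groups contain no such subgroups by the argument of \cite[Theorem 2.4]{Pa_ChuaHai_2026}. Hence $\pi_i(L)$ is central in this case too.

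We conclude that each $\pi_i(L)$ is abelian-by-(locally finite), so $Q=\prod_i\pi_i(L)$ is abelian-by-(locally finite) by Proposition~\ref{pro:5.5}. Since $L$ embeds in $Q$, the group $G/N\cong L$ is abelian-by-(locally finite), and therefore $G$ is (nilpotent-by-abelian)-by-(locally finite). If $G$ is finitely generated, then $G/N$ is finitely generated abelian-by-finite, hence polycyclic-by-finite, so $G$ is (nilpotent-by-polycyclic)-by-finite. Finally, the Auslander--Swan Theorem \cite[3.3.1]{Bo_Lennox_2004} embeds $G/N$ into $\mathrm{GL}_m(\mathbb{Z})$ for some $m\ge 1$, exactly as in Theorem~\ref{th:5.7}.
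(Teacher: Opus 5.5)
Your reduction is exactly the paper's: $N=G\cap\mathrm{JL}_n(R)$, the projections $\pi_i$, the three-way case split, and the final assembly. The paper does not treat the case $n_i\ge 2$ with $D_i$ not a locally finite field separately. \cite[Theorem 3.8]{Pa_ChuaHai_2026} covers generalized radical ascendant subgroups of $\mathrm{GL}_n(D)$ in every degree: they are central, provided $D$ is not a locally finite field when $n\ge 2$. The paper applies it to every $\pi_i(L)$ at once. So the first option you mention is precisely the paper's argument, and with it your proof is complete.

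The self-contained fallback, however, has a genuine gap at limit ordinals. The successor step is fine: if $\mathrm{SL}_{n_i}(D_i)\le H_{\alpha+1}$, then $H_\alpha\cap\mathrm{SL}_{n_i}(D_i)$ is normal in $\mathrm{SL}_{n_i}(D_i)$. It is non-central because $\mathrm{SL}_{n_i}(D_i)$ is perfect and its centralizer in $\mathrm{GL}_{n_i}(D_i)$ is central. But suppose the least $\beta$ with $\mathrm{SL}_{n_i}(D_i)\le H_\beta$ is a limit ordinal. Then all you know is that $\mathrm{SL}_{n_i}(D_i)=\bigcup_{\alpha<\beta}\bigl(H_\alpha\cap\mathrm{SL}_{n_i}(D_i)\bigr)$, a union of a chain of proper subgroups. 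None of these is normalized by $\mathrm{SL}_{n_i}(D_i)$, because $H_\alpha$ is only ascendant, not normal, in $H_\beta$. Since $\mathrm{SL}_{n_i}(D_i)$ need not be finitely generated, such a union is not contradictory. Simplicity of $\mathrm{PSL}_{n_i}(D_i)$ controls only normal subgroups, not ascendant ones. So minimality of $\beta$ does not close the induction, and the claim that every non-central ascendant subgroup contains $\mathrm{SL}_{n_i}(D_i)$ remains unproved.

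The way around this is not to aim for $\mathrm{SL}_{n_i}(D_i)\le H$, but to induct along the ascending series of $H=\pi_i(L)$ itself, whose factors are locally nilpotent or locally finite. Every term is ascendant in $\mathrm{GL}_{n_i}(D_i)$. Suppose $H_\alpha$ is central. Then the image of $H_{\alpha+1}$ in $\mathrm{PGL}_{n_i}(D_i)$ is an ascendant subgroup that is locally nilpotent or locally finite. In the first case it lies in the Hirsch--Plotkin radical; in the second its normal closure is locally finite by \cite[Proposition 1.2.15]{Bo_Dixon_2017}. The preimage of that normal subgroup is normal in $\mathrm{GL}_{n_i}(D_i)$. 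If it were non-central it would contain $\mathrm{SL}_{n_i}(D_i)$, forcing $\mathrm{PSL}_{n_i}(D_i)$ to be locally nilpotent or locally finite. That is impossible, since $\mathrm{PSL}_{n_i}(D_i)$ contains a non-cyclic free subgroup by the argument of Lemma~\ref{lem:3.1}. Hence $H_{\alpha+1}$ is central. Limit steps are now trivial, because a union of central subgroups is central.
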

\begin{proof} By Lemma~\ref{lem:5.4}, $\mathrm{JL}_n(R)$ is a nilpotent normal subgroup of $\mathrm{GL}_n(R),$ and $$\mathrm{GL}_n(R)/\mathrm{JL}_n(R)= \prod_{i=1}^{r}\mathrm{GL}_{n_i}(D_i)$$ for suitable positive integers $n_1, n_2,\ldots, n_r$ and division rings $D_1, D_2,\ldots, D_r.$ Also, $$G/(G\cap \mathrm{JL}_n(R))\cong G\cdot\mathrm{JL}_n(R)/\mathrm{JL}_n(R)$$ is a subgroup of $\mathrm{GL}_n(R)/\mathrm{JL}_n(R).$ Pick $N=G\cap \mathrm{JL}_n(R)$ and $L=G\cdot\mathrm{JL}_n(R)/\mathrm{JL}_n(R).$ Then, $N$ is normal nilpotent in $G$ and $G/N\cong L.$ Consider the projections $$\pi_i: \mathrm{GL}_n(R)/\mathrm{JL}_n(R)\longrightarrow \mathrm{GL}_{n_i}(D_i).$$ 	
	
Since $G$ is generalized radical ascendant in $\mathrm{GL}_n(R),$ the images $\pi_i(L)$ is generalized radical ascendant in $\mathrm{GL}_{n_i}(D_i).$ Then, according to~\cite[Theorem 3.8]{Pa_ChuaHai_2026}, $\pi_i(L)$ is central with $D_i$ is not a locally finite field in case $n_i\geq 2.$ If $D_i$ is a locally finite field, then clearly that $\pi_i(L)$ is a locally finite group. Hence, we conclude that $\pi_i(L)$ is abelian-by-(locally finite). In a similar way as in the proof of (2) in Theorem~\ref{th:5.7}, we get that $G$ is (nilpotent-by-abelian)-by-(locally finite), and $G$ is (nilpotent-by-polycyclic)-by-finite if $G$ is finitely generated, and that the quotient group $G/N$ can be embedded into the general linear group $\mathrm{GL}_n(\mathbb{Z})$ for some $m\geq 1.$
\end{proof}
\section{Locally generalized radical linear groups\\ over locally finite algebras}\label{S6}

Let $A$ be an algebra over a field $F$. Recall that an element $a\in A$ is \textit{algebraic} over $F$ if $a$ is a root of some non-zero polynomial $p[t]\in F[t]$. A non-empty subset $S$ of $A$ is \textit{algebraic} over $F$ if every its element is algebraic over $F$. If $A$ is algebraic over $F$, then sometimes we say that $A$ is an \textit{algebraic algebra}. Recall also that $A$ is \textit{locally finite} over $F$ if every its finite subset generates a finite-dimensional subalgebra over $F.$ Clearly, every locally finite algebra is algebraic. The converse is not true in general. However, I. Kaplansky and A. Shirshov showed that every algebraic $\mathrm{PI}$-algebra is locally finite (see, e.g. ~\cite[Ch.~10, \S12, Theorem 1]{Bo_Jacobson_1964}). In this section, we describe the structure of locally generalized radical linear groups over locally finite algebras. The first main result is Theorem~\ref{th:6.8} which shows that every locally generalized radical linear group over a locally finite algebra $A$ is locally (solvable-by-finite). Using this theorem we can give the affirmative answer to GBP for linear groups over locally finite algebras (see Theorem \ref{th:6.11}). As a consequence, the GBP holds for skew linear groups over weakly locally finite division rings.

The proof of the following lemma is similar to that of \cite[Lemma 4.1]{Pa_ChuaHai_2026}, so it can be omitted. 
\begin{lemma}\label{lem:6.1} Let $A$ be a finite-dimensional algebra over a field $F,$ and $n$ a positive integer. Then, every subgroup of the general linear group $\mathrm{GL}_n(A)$ can be embedded into the general linear group $\mathrm{GL}_{nr}(F)$, where $r=\dim_FA.$
\end{lemma}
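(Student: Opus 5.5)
The plan is to use the regular representation of $A$ to identify $\mathrm{M}_n(A)$ with an $F$-subalgebra of $\mathrm{M}_{nr}(F)$. Once that is done, every subgroup of $\mathrm{GL}_n(A)$ becomes a subgroup of $\mathrm{GL}_{nr}(F)$ by restriction.

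\textbf{Step 1: the algebra embedding.} Put $B=\mathrm{M}_n(A)$. This is an $F$-algebra with identity, and $\dim_F B=n^2r$. As a left $A$-module, $B$ acts faithfully on $V=A^n$ (column vectors) by left multiplication, and $\dim_F V=nr$. Since $F$ is central in $A$, every $X\in B$ acts on $V$ by an $F$-linear map $\lambda_X$. Faithfulness holds because $X\cdot e_j$ recovers the $j$-th column of $X$, where $e_j$ is the column with $1$ in position $j$. The map $X\mapsto\lambda_X$ is additive, multiplicative, $F$-linear and sends $1$ to $\mathrm{id}$. Fixing an $F$-basis of $V$ therefore gives an injective unital $F$-algebra homomorphism
$$\Phi:\mathrm{M}_n(A)\longrightarrow \mathrm{End}_F(V)\cong\mathrm{M}_{nr}(F).$$

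\textbf{Step 2: passing to unit groups.} Any unital ring homomorphism sends units to units. Hence $\Phi$ restricts to a group homomorphism $\mathrm{GL}_n(A)=\mathrm{M}_n(A)^\times\to\mathrm{GL}_{nr}(F)$. This restriction is injective because $\Phi$ is. So every subgroup $G\le\mathrm{GL}_n(A)$ maps isomorphically onto $\Phi(G)\le\mathrm{GL}_{nr}(F)$.

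\textbf{Main obstacle.} The only point needing care is the construction of $\Phi$ itself, in particular its faithfulness and the fact that $A$ need not be commutative. This is handled by using left multiplication on the module $A^n$ rather than any determinant-type construction. An equivalent route is to embed $A\hookrightarrow\mathrm{M}_r(F)$ by the left regular representation $a\mapsto(x\mapsto ax)$, which is faithful because $a\cdot1=a$. Applying this entrywise gives $\mathrm{M}_n(A)\hookrightarrow\mathrm{M}_n(\mathrm{M}_r(F))\cong\mathrm{M}_{nr}(F)$, mirroring the argument of \cite[Lemma 4.1]{Pa_ChuaHai_2026}. The rest is routine.
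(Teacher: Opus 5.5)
Your proof is correct and is essentially the argument the paper has in mind: the paper omits the proof and defers to the analogous lemma in its earlier work. Letting $\mathrm{M}_n(A)$ act $F$-linearly and faithfully on $A^n$ (equivalently, applying the left regular representation $A\hookrightarrow \mathrm{M}_r(F)$ entrywise) gives an injective unital $F$-algebra homomorphism $\mathrm{M}_n(A)\hookrightarrow \mathrm{M}_{nr}(F)$, whose restriction to unit groups embeds $\mathrm{GL}_n(A)$, and hence every subgroup of it, into $\mathrm{GL}_{nr}(F)$.
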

In \cite{Zassenhaus_1938}, Zassenhaus proved that every locally solvable linear group over fields is solvable. Lemma~\ref{lem:6.1} allows us to get the same result for linear groups over finite-dimensional algebras. Namely, from Lemma~\ref{lem:6.1} together with ~\cite[Corollary 1.4.9]{Bo_Dixon_2017}, we can get the following theorem. 

\begin{theorem}\label{th:6.2} Let $A$ be a finite-dimensional $F$-algebra. Assume that $G$ a subgroup of the general linear group $\mathrm{GL}_n(A).$ The following conditions are equivalent:
	\begin{itemize}
		\item [(1)] $G$ is locally solvable.
		\item [(2)] $G$ is solvable.
		\item [(3)] $G$ is hyperabelian.
		\item [(4)] $G$ is radical.
		\item [(5)] $G$ is locally radical.
	\end{itemize} 
\end{theorem}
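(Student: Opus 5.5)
The plan is to transport everything to a linear group over a field, using Lemma~\ref{lem:6.1}, and then invoke the known equivalences for linear groups over fields, namely Zassenhaus' theorem in the form of \cite[Corollary 1.4.9]{Bo_Dixon_2017}. Concretely, let $r=\dim_F A$. By Lemma~\ref{lem:6.1}, the group $G$ is isomorphic to a subgroup $\widetilde G$ of $\mathrm{GL}_{nr}(F)$. All five properties in the statement are group-theoretic properties, invariant under isomorphism. It therefore suffices to prove the equivalence of (1)--(5) for $\widetilde G\le\mathrm{GL}_{nr}(F)$, i.e.\ for a linear group over a field.

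First I record the trivial implications, valid for arbitrary groups.
\begin{itemize}
\item (2)$\Rightarrow$(3): a finite derived series is an ascending normal series with abelian factors.
\item (3)$\Rightarrow$(4): abelian factors are locally nilpotent.
\item (4)$\Rightarrow$(5): finitely generated subgroups of radical groups are radical, since radical groups are subgroup-closed (intersect the series with the subgroup).
\item (2)$\Rightarrow$(1): immediate.
\end{itemize}
It then remains to close the cycle by showing (5)$\Rightarrow$(1) and (1)$\Rightarrow$(2) for linear groups over a field.

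For (1)$\Rightarrow$(2), I apply Zassenhaus' theorem \cite{Zassenhaus_1938}: a locally solvable linear group over a field is solvable. This is part of \cite[Corollary 1.4.9]{Bo_Dixon_2017}.

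For (5)$\Rightarrow$(1), take a finitely generated subgroup $H$ of $\widetilde G$. By hypothesis $H$ is radical, and I want to conclude that $H$ is solvable. Here I would cite \cite[Corollary 1.4.9]{Bo_Dixon_2017} directly, since it asserts that for linear groups over fields radical, hyperabelian, locally solvable and solvable coincide. If one wants the mechanism, it runs through the following steps.
\begin{itemize}
\item A radical linear group has a locally nilpotent normal subgroup with abelian-by-finite quotient, by the Tits-type structure results for linear groups.
\item Locally nilpotent linear groups are hypercentral, and in fact solvable, by the results on unipotent and diagonalizable parts (Zassenhaus/Mal'cev).
\item A finitely generated radical group has no nontrivial locally finite nonsolvable sections that can appear in a finitely generated linear group. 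More precisely, locally finite radical linear groups are solvable-by-finite with the finite part being radical, hence solvable, because a radical finite group is solvable.
\end{itemize}
Consequently each finitely generated subgroup is solvable, so $\widetilde G$ is locally solvable.

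The main obstacle is this last step, passing from ``radical'' to ``solvable'' in the linear setting. Specifically, one must rule out the locally finite factors of the radical series contributing nonsolvable pieces. I would handle this entirely by quoting \cite[Corollary 1.4.9]{Bo_Dixon_2017} for $\mathrm{GL}_{nr}(F)$, rather than reproving it.
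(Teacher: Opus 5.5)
Your proposal is correct and follows the paper's route exactly: embed $G$ into $\mathrm{GL}_{nr}(F)$ via Lemma~\ref{lem:6.1} and invoke \cite[Corollary 1.4.9]{Bo_Dixon_2017}, applying it to finitely generated subgroups for (5)$\Rightarrow$(1). Your optional ``mechanism'' sketch is not needed and is somewhat muddled, since a radical group's ascending series has only locally nilpotent factors and so there are no locally finite factors to rule out; because you rely on the citation, this does not affect the argument.
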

Combining Lemma~\ref{lem:6.1} and \cite[Theorems 1, 2]{Pa_Tits_1972} (see also e.g,~\cite[Theorem 1.4.3]{Bo_Dixon_2017}), we get the following result, which can be considered as Tits' Alternative for linear groups over finite-dimensional algebras.
\begin{theorem}\label{th:6.3} Let $A$ be a finite-dimensional $F$-algebra, $n$ a positive integer, and $G$ a subgroup of the general linear group $\mathrm{GL}_n(A).$ If $G$ contains no non-cyclic free subgroup, then $G$ contains a solvable normal subgroup $N$ such that $G/N$ is locally finite. Moreover, if $F$ has characteristic zero, then $G/N$ is finite.
\end{theorem}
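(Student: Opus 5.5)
The plan is to transfer the classical Tits Alternative from fields to finite-dimensional algebras via the embedding of Lemma~\ref{lem:6.1}. Put $r=\dim_F A$ and let $m=nr$. By Lemma~\ref{lem:6.1}, there is an injective group homomorphism $\iota\colon \mathrm{GL}_n(A)\to \mathrm{GL}_m(F)$. Concretely, one identifies $\mathrm{M}_n(A)$ with a subalgebra of $\mathrm{End}_F(A^n)\cong \mathrm{M}_m(F)$ via left multiplication on the $F$-space $A^n$, which has dimension $nr$. This map is a unital injective algebra homomorphism, so it sends units to units and restricts to an injective homomorphism of unit groups. Thus $G\cong \iota(G)\le \mathrm{GL}_m(F)$.

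Next we apply Tits' theorem \cite[Theorems 1, 2]{Pa_Tits_1972} (see also \cite[Theorem 1.4.3]{Bo_Dixon_2017}) to the linear group $\iota(G)$ over the field $F$. Since $\iota$ is an isomorphism onto its image, $\iota(G)$ contains no non-cyclic free subgroup precisely when $G$ contains none. Under our hypothesis, Tits' Alternative in its general (not necessarily finitely generated) form yields a solvable normal subgroup $M$ of $\iota(G)$ with $\iota(G)/M$ locally finite. Moreover, in characteristic zero it yields that $\iota(G)$ is solvable-by-finite, so $M$ can be chosen with $\iota(G)/M$ finite. Pulling back, we set $N=\iota^{-1}(M)\cap G$, which is solvable and normal in $G$, with $G/N\cong \iota(G)/M$. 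This gives both conclusions.

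The only point needing care is the precise form of Tits' theorem for arbitrary subgroups. For finitely generated linear groups, the statement is "free subgroup or solvable-by-finite". For arbitrary linear groups, the dichotomy is "free subgroup or solvable-by-(locally finite)"; in characteristic zero, every linear group without free subgroups is solvable-by-finite. This is exactly Tits' Theorem 2 and Corollary 1, as recorded in \cite[Theorem 1.4.3]{Bo_Dixon_2017}, so no additional argument is needed beyond citing it correctly. I expect no real obstacle: the embedding in Lemma~\ref{lem:6.1} does all the work, and the characteristic of $F$ is unchanged by it.
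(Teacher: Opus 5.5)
Your proposal is correct and follows the paper's route exactly: the paper's entire proof is to embed $\mathrm{GL}_n(A)$ into $\mathrm{GL}_{nr}(F)$ via Lemma~\ref{lem:6.1} and then invoke Tits' Alternative \cite[Theorems 1, 2]{Pa_Tits_1972} (see also \cite[Theorem 1.4.3]{Bo_Dixon_2017}), which is what you do. Your explicit description of the embedding (left multiplication on $A^n$) and your pullback of the solvable normal subgroup fill in details the paper leaves implicit.
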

\begin{corollary}\label{cor:6.4} Let $A$ be a finite-dimensional $F$-algebra, $n$ a positive integer, and $G$ a finitely generated subgroup of the general linear group $\mathrm{GL}_n(A).$ If $G$ contains no non-cyclic free subgroups, then $G$ is solvable-by-finite.
\end{corollary}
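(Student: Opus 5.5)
The plan is to apply Theorem~\ref{th:6.3} directly to $G$ and then use finite generation to upgrade ``locally finite'' to ``finite''. Since $G$ is a subgroup of $\mathrm{GL}_n(A)$ with $A$ finite-dimensional over $F$ and $G$ contains no non-cyclic free subgroup, Theorem~\ref{th:6.3} provides a solvable normal subgroup $N$ of $G$ such that $G/N$ is locally finite. (Equivalently, one could first embed $G$ into $\mathrm{GL}_{nr}(F)$ with $r=\dim_F A$ via Lemma~\ref{lem:6.1} and invoke Tits' Alternative \cite{Pa_Tits_1972} there; the conclusion is the same.)

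Next, I use that $G$ is finitely generated. Then the quotient $G/N$ is also finitely generated, being a homomorphic image of $G$. A finitely generated locally finite group is finite by the definition of local finiteness, since the whole group is one of its own finitely generated subgroups. Hence $G/N$ is finite. As $N$ is a solvable normal subgroup of $G$ of finite index, $G$ is solvable-by-finite, as claimed.

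There is essentially no obstacle. All the real content sits in Theorem~\ref{th:6.3}, that is, in Tits' Alternative transported through the embedding of Lemma~\ref{lem:6.1}. The one point worth stating explicitly is that the alternative holds for arbitrary subgroups in every characteristic, giving solvable-by-(locally finite), and not merely for finitely generated subgroups. The only remaining step is the observation that finite generation passes to quotients.
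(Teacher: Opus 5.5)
Your proof is correct and is the argument the paper intends: it states Corollary~\ref{cor:6.4} without a separate proof, as an immediate consequence of Theorem~\ref{th:6.3}. That theorem supplies a solvable normal subgroup $N$ with $G/N$ locally finite, and $G/N$ is finite because it is finitely generated.
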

\begin{theorem}\label{th:6.5} Let $A$ be a finite-dimensional $F$-algebra, $n$ a positive integer, and $G$ a subgroup of the general linear group $\mathrm{GL}_n(A).$ If $G$ is locally generalized radical, then $G$ contains a solvable normal subgroup $N$ such that the quotient group $G/N$ is locally finite. Additionally, if $F$ has characteristic zero, then $G$ is solvable-by-finite.
\end{theorem}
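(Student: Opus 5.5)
By Theorem~\ref{th:6.3} (Tits' Alternative for linear groups over finite-dimensional algebras), it suffices to show that $G$ has no non-cyclic free subgroup. Once that is known, Theorem~\ref{th:6.3} directly yields a solvable normal subgroup $N$ of $G$ with $G/N$ locally finite. When $\operatorname{char}F=0$, the same theorem gives $G/N$ finite, so $G$ is solvable-by-finite.

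To exclude free subgroups, suppose $G$ contains a non-cyclic free subgroup. It then contains a free subgroup $\Phi=\langle x,y\rangle$ of rank $2$. Since $G$ is locally generalized radical, the finitely generated subgroup $\Phi$ is generalized radical. Hence $\Phi$ has an ascending normal series whose factors are locally finite or locally nilpotent. Let $\Phi_1$ be the first non-trivial term of this series. Then $\Phi_1$ is locally finite or locally nilpotent and normal in the preceding term. More simply, I will argue by transfinite induction along the series that every non-trivial generalized radical group has a non-trivial normal subgroup that is locally finite or locally nilpotent. Indeed, the first non-trivial term $G_\alpha$ equals $G_\alpha/G_{\alpha-1}$ with $G_{\alpha-1}=1$; the index $\alpha$ cannot be a limit ordinal, since then $G_\alpha$ would be a union of trivial groups. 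Moreover $G_\alpha$ is normal in $G_{\alpha+1}$, but to be normal in the whole group I should instead use the Hirsch--Plotkin radical together with the locally finite radical. The cleaner route is the following. A generalized radical group has an ascendant (not necessarily normal) non-trivial subgroup that is locally finite or locally nilpotent, namely the first non-trivial term. In a free group $\Phi$, every non-trivial locally finite subgroup is trivial, since free groups are torsion-free. Every locally nilpotent subgroup of a free group is cyclic, because non-cyclic subgroups of free groups are free and a free group of rank at least $2$ is not nilpotent. So this first term is a non-trivial infinite cyclic ascendant subgroup $C$ of $\Phi$. Now an ascendant subgroup of a free group of rank $2$ that is cyclic must be trivial. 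The reason is that the first step of the ascending chain containing $C$ normally in a non-cyclic group $K$ forces $K$ to be a non-cyclic free group with a non-trivial cyclic normal subgroup, which is impossible. Such a $K$ exists because the chain starts at $C$, is a union at limit stages, and ends at $\Phi$. The chain cannot stay cyclic all the way: a union of an ascending chain of cyclic subgroups of $\Phi$ is locally cyclic, hence cyclic or non-finitely generated abelian, and in a free group it is cyclic. So some successor step passes from a cyclic group to a non-cyclic group $K$ in which it is normal. Since $K$ is free of rank at least $2$, it has no non-trivial abelian normal subgroup, which is a contradiction.

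Alternatively, and more in the spirit of the paper, I could apply Lemma~\ref{lem:2.2} and argue with locally (solvable-by-finite) subgroups. But the above direct argument shows that $\Phi$ is not generalized radical, which is all that is needed. The main obstacle is this last step, showing a rank-$2$ free group is not generalized radical. I expect to handle it via ascendant subgroups as above, or by citing the standard fact that free groups have trivial Hirsch--Plotkin and locally finite radicals (and hence a trivial generalized radical). Everything else is a direct invocation of Theorem~\ref{th:6.3} through Lemma~\ref{lem:6.1}.
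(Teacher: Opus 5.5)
Your proof is correct and follows the paper's route: rule out non-cyclic free subgroups in $G$, then apply Theorem~\ref{th:6.3}. The paper simply cites \cite[Theorem 2.4]{Pa_ChuaHai_2026} for the first step, whereas you prove it directly: the first non-trivial term of the series would be a non-trivial cyclic ascendant subgroup of a rank-$2$ free group, and your chain argument correctly excludes this. In a write-up, drop the abandoned false starts that precede that argument.
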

\begin{proof}
Since $G$ is a locally generalized radical group, by \cite[Theorem 2.4]{Pa_ChuaHai_2026}, $G$ contains no non-cyclic free subgroups. Hence, the conclusions follows immediately from Theorem~\ref{th:6.3}.
\end{proof}
\begin{corollary}\label{cor:6.6} Let $A$ be a finite-dimensional $F$-algebra, $n$ a positive integer, and $G$ a subgroup of the general linear group $\mathrm{GL}_n(A).$ Then, the following conditions are equivalent:
	\begin{itemize}
		\rm\item [(1)]\textit{$G$ is solvable-by-(locally finite).}
		\item [(2)]\textit{$G$ is (locally solvable)-by-(locally finite).}
		\item [(3)]\textit{$G$ is locally (solvable-by-finite).}
		\item [(4)]\textit{$G$ is locally (hyperabelian-by-finite).}
		\item [(5)]\textit{$G$ is locally (radical-by-finite).}
		\item [(6)]\textit{$G$ is locally nearly radical.}
		\item [(7)]\textit{$G$ is generalized radical.}
		\item [(8)]\textit{$G$ is locally generalized radical.}
	\end{itemize}
\end{corollary}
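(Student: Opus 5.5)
We prove the equivalences by closing a cycle through the strongest condition, (1), and the weakest, (8). The cheap direction is a chain of inclusions among group classes. (1) $\Rightarrow$ (2) is trivial. For (2) $\Rightarrow$ (3), Lemma~\ref{lem:2.1}-type reasoning applies: if $N$ is a locally solvable normal subgroup with $G/N$ locally finite and $H$ is a finitely generated subgroup of $G$, then $HN/N\cong H/(H\cap N)$ is finite. So $H\cap N$ is a finite-index subgroup of a finitely generated group, hence finitely generated, hence solvable because $N$ is locally solvable. Thus $H$ is solvable-by-finite.

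The implications (3) $\Rightarrow$ (4) $\Rightarrow$ (5) $\Rightarrow$ (6) hold because solvable groups are hyperabelian, hyperabelian groups are radical, and radical-by-finite groups are nearly radical; each is applied to finitely generated subgroups. For (6) $\Rightarrow$ (8), every nearly radical group is generalized radical, as recalled in the introduction from \cite{Pa_ChuaHai_2026}. The implication (7) $\Rightarrow$ (8) is immediate, since subgroups of generalized radical groups are generalized radical (\cite{Pa_ChuaHai_2026}).

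The substantive step is (8) $\Rightarrow$ (1), and here Theorem~\ref{th:6.5} does the work. If $G$ is locally generalized radical, then $G$ has a solvable normal subgroup $N$ with $G/N$ locally finite. That is precisely (1).

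It remains to feed (7) into the cycle, which we do by proving (1) $\Rightarrow$ (7). Since $N$ is solvable, its derived series is a finite normal series of $G$ with abelian factors, and abelian groups are locally nilpotent. Appending $G/N$, which is locally finite, yields a finite normal series of $G$ whose factors are locally nilpotent or locally finite. Hence $G$ is generalized radical.

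The only step needing care is the containment (6) $\Rightarrow$ (8), or equivalently that nearly radical groups are generalized radical. We take this from \cite{Pa_ChuaHai_2026} as a known fact. If a direct argument were required, it would proceed as follows: a radical group has an ascending normal series with locally nilpotent factors, and its finite-index extension is handled by passing to the normal core, which is still radical and has finite quotient. The remaining work is bookkeeping, since the real content is already packaged in Theorem~\ref{th:6.5}, that is, in Tits' Alternative combined with \cite[Theorem 2.4]{Pa_ChuaHai_2026}.
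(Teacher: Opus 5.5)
Your proof is correct and follows the paper's approach. The implications among the group classes are elementary, and the only substantive step, (8)$\Rightarrow$(1), comes from Theorem~\ref{th:6.5}. Your one deviation is to pass from (6) directly to (8) and bring (7) into the cycle via (1)$\Rightarrow$(7)$\Rightarrow$(8), and this is more careful than the paper: the paper lists (6)$\Rightarrow$(7) (locally nearly radical implies generalized radical) as trivial, although it is not a formal consequence of the definitions for arbitrary groups and here holds only by going around the cycle through Theorem~\ref{th:6.5}.
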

\begin{proof}
	The implications (1) $\Rightarrow$ (2) $\Rightarrow$ (3) $\Rightarrow$ (4) $\Rightarrow$ (5) $\Rightarrow$ (6) $\Rightarrow$ (7) $\Rightarrow$ (8) are trivial, and the implication (8) $\Rightarrow$ (1) follows from Theorem \ref{th:6.5}.
\end{proof}
\begin{lemma}\label{lem:6.7} Let $A$ be a locally finite algebra over a field $F$, and $n$ a positive integer. If $H$ a finitely generated subgroup of the general linear group $\mathrm{GL}_n(A),$ then there exists a finite dimensional $F$-subalgebra $B$ of $A$ such that $H$ is contained in the general linear group $\mathrm{GL}_n(B).$ Moreover, either $H$ contains a non-cyclic free subgroup or $H$ is solvable-by-finite.
\end{lemma}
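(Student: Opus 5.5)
Let $H$ be generated by finitely many matrices $X_1,\dots,X_k\in \mathrm{GL}_n(A)$. Since each $X_j$ is invertible in $\mathrm{M}_n(A)$, the inverses $X_1^{-1},\dots,X_k^{-1}$ also lie in $\mathrm{M}_n(A)$. Let $S\subseteq A$ be the finite set of all entries of the $2k$ matrices $X_1^{\pm 1},\dots,X_k^{\pm 1}$. Because $A$ is locally finite, $S$ generates a finite-dimensional $F$-subalgebra $B$ of $A$. It contains $1$, since we may adjoin $1$ to $S$ if necessary. Each $X_j$ and each $X_j^{-1}$ lies in $\mathrm{M}_n(B)$, and $X_jX_j^{-1}=X_j^{-1}X_j=1$ holds in $\mathrm{M}_n(B)$. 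Hence every $X_j$ is a unit of $\mathrm{M}_n(B)$, that is, $X_j\in \mathrm{GL}_n(B)$. Since $\mathrm{GL}_n(B)$ is a group, the subgroup $H$ generated by the $X_j$ is contained in $\mathrm{GL}_n(B)$. This proves the first assertion.

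The point requiring care is that the inverses must be put into $B$ from the start. A matrix over $B$ that is invertible over $A$ need not a priori have its inverse over $B$. Including the entries of the $X_j^{-1}$ in $S$ avoids this issue. Alternatively, one could note that a finite-dimensional algebra is left artinian, so one-sided invertibility implies invertibility, but this is not needed.

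For the second assertion, $H$ is now a finitely generated subgroup of $\mathrm{GL}_n(B)$ with $B$ a finite-dimensional $F$-algebra. If $H$ contains no non-cyclic free subgroup, Corollary~\ref{cor:6.4} applied to $B$ shows that $H$ is solvable-by-finite. This corollary comes from Lemma~\ref{lem:6.1} and Tits' Alternative via Theorem~\ref{th:6.3}: the normal subgroup $N$ is solvable and $H/N$ is a finitely generated locally finite group, hence finite. So either $H$ contains a non-cyclic free subgroup or $H$ is solvable-by-finite. The only step that is more than bookkeeping is the choice of $B$ described above.
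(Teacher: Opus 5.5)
Your proof is correct and follows the paper's argument: take the finite-dimensional subalgebra $B$ generated by the matrix entries of the generators, then apply Corollary~\ref{cor:6.4} to $H\le \mathrm{GL}_n(B)$. The only difference is that you also put the entries of the inverses $X_j^{-1}$ into $S$, which makes $H\le \mathrm{GL}_n(B)$ immediate. The paper uses only the entries of the generators and calls the containment clear; it does hold, because an element of the finite-dimensional algebra $\mathrm{M}_n(B)$ that is a unit in $\mathrm{M}_n(A)$ is a non-zero-divisor in $\mathrm{M}_n(B)$, hence a unit there.
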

\begin{proof} Assume that $H=\langle x_1,x_2,\ldots, x_r\rangle$ is a finitely generated subgroup of the general linear group $\mathrm{GL}_n(A).$ Let $x_i=(a_{st}^i)$ and let $S$ be the set of all entries $a_{st}^i$ of all matrices $x_i$. Let $B$ be the subalgebra of  $A$ generated by the set $S$ over $F.$ By hypothesis, $B$ is a finite-dimensional algebra over $F$. Clearly, $H$ is a finitely generated subgroup of $\mathrm{GL}_n(B)$, and by Corollary~\ref{cor:6.4}, the result follows.
\end{proof}
\begin{theorem}\label{th:6.8} Let $A$ be a locally finite algebra over a field $F,$ $n$ a positive integer, and $G$ a subgroup of the general linear group $\mathrm{GL}_n(A).$ If $G$ contains no non-cyclic free subgroups, then $G$ is locally (solvable-by-finite). Consequently, every locally generalized radical linear group over a locally finite algebra is locally (solvable-by-finite).
\end{theorem}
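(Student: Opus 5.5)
The plan is to reduce to finitely generated subgroups and apply Lemma~\ref{lem:6.7}. Suppose $G\le \mathrm{GL}_n(A)$ contains no non-cyclic free subgroups, and let $H$ be an arbitrary finitely generated subgroup of $G$. By Lemma~\ref{lem:6.7}, there is a finite-dimensional $F$-subalgebra $B$ of $A$ with $H\le \mathrm{GL}_n(B)$. The same lemma gives an alternative: either $H$ contains a non-cyclic free subgroup, or $H$ is solvable-by-finite. The first option is excluded, since any free subgroup of $H$ is a free subgroup of $G$. Hence every finitely generated subgroup of $G$ is solvable-by-finite, which is exactly the statement that $G$ is locally (solvable-by-finite).

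For the consequence, let $G$ be a locally generalized radical subgroup of $\mathrm{GL}_n(A)$. By \cite[Theorem 2.4]{Pa_ChuaHai_2026}, a locally generalized radical group contains no non-cyclic free subgroups; this is the same fact used in the proof of Theorem~\ref{th:6.5}. The first part therefore applies and shows that $G$ is locally (solvable-by-finite).

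Alternatively, one could argue through Theorem~\ref{th:6.5} applied inside $\mathrm{GL}_n(B)$. A finitely generated subgroup $H$ of $G$ is generalized radical, so it has a solvable normal subgroup $N$ with $H/N$ locally finite. Since $H/N$ is also finitely generated, it is finite. This route gives the same conclusion.

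There is no genuine obstacle. The only point needing care is that Lemma~\ref{lem:6.7}, via Corollary~\ref{cor:6.4}, is applied to the finitely generated subgroup $H$ rather than to $G$ itself. Lemma~\ref{lem:6.1} is what makes this work, since it embeds $\mathrm{GL}_n(B)$ into $\mathrm{GL}_{nr}(F)$, where $r=\dim_F B$, so that Tits' Alternative applies to $H$.
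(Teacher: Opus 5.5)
Your proof is correct and matches the paper's argument essentially step for step: pass to a finitely generated subgroup $H$, use Lemma~\ref{lem:6.7} to get the dichotomy ``non-cyclic free subgroup or solvable-by-finite,'' rule out the free case, and derive the consequence from \cite[Theorem 2.4]{Pa_ChuaHai_2026}. Your alternative route through Theorem~\ref{th:6.5} is also valid, because $H/N$ is finitely generated and locally finite, hence finite.
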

\begin{proof} Assume that $H$ is a finitely generated subgroup of the group $G.$ Then, according to Lemma~\ref{lem:6.7}, $H$ is contained in the general linear group $\mathrm{GL}_{n}(B),$ where $B$ is a finite-dimensional subalgebra of $A.$ Moreover, either $H$ is solvable-by-finite or $H$ contains a non-cyclic free subgroup. The latter case cannot occur because by hypothesis, $G$ contains no non-cyclic free subgroups. Hence, $H$ is solvable-by-finite, and so $G$ is locally (solvable-by-finite). Consequently, by \cite[Theorem 2.4]{Pa_ChuaHai_2026}, the last statement of the theorem follows.
\end{proof}
\begin{corollary}\label{cor:6.9} Let $A$ be a locally finite algebra over a field $F,$ $n$ a positive integer, and $G$ a subgroup of the general linear group $\mathrm{GL}_n(A).$ Then, the following conditions are equivalent:
	\begin{itemize}
		\item [(1)] $G$ is locally (solvable-by-finite).
		\item [(2)] $G$ is locally (hyperabelian-by-finite).
		\item [(3)] $G$ is locally (radical-by-finite).
		\item [(4)] $G$ is locally nearly radical.
		\item [(5)] $G$ is locally generalized radical.
	\end{itemize}
\end{corollary}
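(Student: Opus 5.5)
The plan is to prove the cycle of implications (1) $\Rightarrow$ (2) $\Rightarrow$ (3) $\Rightarrow$ (4) $\Rightarrow$ (5) $\Rightarrow$ (1), in the same spirit as Corollary~\ref{cor:6.6}. The forward chain is a matter of inclusions between group classes, applied to each finitely generated subgroup. Every solvable group is hyperabelian, since its derived series is a finite normal series with abelian factors. Every hyperabelian group is radical. Consequently, if a finitely generated subgroup $H$ of $G$ is solvable-by-finite, it is hyperabelian-by-finite and hence radical-by-finite; this gives (1) $\Rightarrow$ (2) $\Rightarrow$ (3).

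For (3) $\Rightarrow$ (4), I use that a radical-by-finite group is nearly radical. A finite group is locally finite, so the definition of nearly radical applies, as recalled in \cite{Pa_ChuaHai_2026}. For (4) $\Rightarrow$ (5), a nearly radical group is generalized radical: the ascending normal series of the radical normal subgroup, which has locally nilpotent factors, is extended by the finite top factor. Hence every finitely generated subgroup is generalized radical, that is, $G$ is locally generalized radical. None of these steps uses the hypothesis that $A$ is locally finite.

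The only substantive implication is (5) $\Rightarrow$ (1), and it is exactly the last assertion of Theorem~\ref{th:6.8}. Unpacked: if $G$ is locally generalized radical, then \cite[Theorem 2.4]{Pa_ChuaHai_2026} shows that $G$ has no non-cyclic free subgroup. Take a finitely generated $H\le G$. By Lemma~\ref{lem:6.7}, $H$ lies in $\mathrm{GL}_n(B)$ for some finite-dimensional subalgebra $B$ of $A$, and $H$ either contains a non-cyclic free subgroup or is solvable-by-finite. The first alternative is excluded, so $H$ is solvable-by-finite, which is (1).

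I expect no real obstacle. The only point requiring care is that the class inclusions (radical-by-finite $\subseteq$ nearly radical $\subseteq$ generalized radical) match the precise definitions fixed in \cite{Pa_ChuaHai_2026}; I would cite the remark in the introduction listing these containments rather than re-deriving them.
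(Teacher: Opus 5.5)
Your proposal is correct and follows the paper's route. The paper gives no separate proof, but the structure is that of Corollary~\ref{cor:6.6}: $(1)\Rightarrow(2)\Rightarrow(3)\Rightarrow(4)\Rightarrow(5)$ are class inclusions on finitely generated subgroups, and $(5)\Rightarrow(1)$ is exactly the last assertion of Theorem~\ref{th:6.8}, obtained via \cite[Theorem 2.4]{Pa_ChuaHai_2026} and Lemma~\ref{lem:6.7}. One small fix in your step $(4)\Rightarrow(5)$: under the definition you yourself invoke, a nearly radical group has a locally finite top factor, not necessarily a finite one. This is harmless, since locally finite factors are allowed in a generalized radical series.
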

Observe that every (locally solvable)-by-(locally finite) group is locally (solvable-by-finite). It is known that every  locally (solvable-by-finite) skew linear group over a locally finite division ring is (locally solvable)-by-(locally finite) (see \cite[3.3.9]{Pa_ShWe_86}). Hence, the following result follows immediately from Corollary \ref{cor:6.9}.
\begin{corollary}\label{cor:6.10}
Let $R$ be a locally finite division with center $F,$ $n$ a positive integer, and $G$ a subgroup of the general linear group $\mathrm{GL}_n(R).$ Then, the following conditions are equivalent:
	\begin{itemize}
		\item [(1)] $G$ is (locally solvable)-by-(locally finite).
		\item [(2)] $G$ is locally (solvable-by-finite).
		\item [(3)] $G$ is locally  (hyperabelian-by-finite).
		\item [(4)] $G$ is locally (radical-by-finite).
		\item [(5)] $G$ is locally nearly radical.
		\item [(6)] $G$ is locally generalized radical.
	\end{itemize}
\end{corollary}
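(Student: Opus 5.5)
The plan is to establish the cycle $(1)\Rightarrow(2)\Rightarrow(3)\Rightarrow(4)\Rightarrow(5)\Rightarrow(6)\Rightarrow(1)$. Here $R$ is a locally finite division ring, so it is in particular a locally finite algebra over its center $F$.

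The chain $(1)\Rightarrow(2)\Rightarrow\cdots\Rightarrow(6)$ should be routine.
\begin{itemize}
\item For $(1)\Rightarrow(2)$: let $G$ have a locally solvable normal subgroup $N$ with $G/N$ locally finite, and let $H$ be a finitely generated subgroup of $G$. Then $HN/N\cong H/(H\cap N)$ is finitely generated and locally finite, hence finite. So $H\cap N$ is a subgroup of finite index in the finitely generated group $H$, and is therefore finitely generated. As a finitely generated subgroup of the locally solvable group $N$, it is solvable. Hence $H$ is solvable-by-finite.
\item The implications $(2)\Rightarrow(3)\Rightarrow(4)\Rightarrow(5)\Rightarrow(6)$ are the standard inclusions of classes: solvable $\subseteq$ hyperabelian $\subseteq$ radical, radical-by-finite groups are nearly radical, and nearly radical groups are generalized radical. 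These are applied locally, exactly as in Corollary~\ref{cor:6.6} and Corollary~\ref{cor:6.9}.
\end{itemize}

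The substantive step is $(6)\Rightarrow(1)$, and I would do it in two moves.
\begin{itemize}
\item First, view $\mathrm{M}_n(R)$ as a locally finite $F$-algebra, or simply note that $R$ itself is a locally finite $F$-algebra. Then Theorem~\ref{th:6.8}, equivalently the implication $(5)\Rightarrow(1)$ of Corollary~\ref{cor:6.9} with $A=R$, shows that a locally generalized radical subgroup $G\le\mathrm{GL}_n(R)$ is locally (solvable-by-finite).
\item Second, invoke the skew linear result \cite[3.3.9]{Pa_ShWe_86}, quoted just before the statement. It says that a locally (solvable-by-finite) skew linear group over a locally finite division ring is (locally solvable)-by-(locally finite). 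This yields $(1)$.
\end{itemize}

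The only point needing care is checking that the hypotheses of \cite[3.3.9]{Pa_ShWe_86} match: $G$ must be a subgroup of $\mathrm{GL}_n(R)$ with $R$ locally finite over its center. That is exactly our setting, so I expect no real obstacle. The main risk is only whether the cited result requires local finiteness over the center or over some subfield. If it required more, I would instead run the argument through finite-dimensional subalgebras $B\subseteq R$ via Lemma~\ref{lem:6.7}, together with Theorem~\ref{th:6.5} applied to each $\mathrm{GL}_n(B)$, and patch the local pieces together.
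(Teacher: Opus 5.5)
Your proposal is correct and follows the paper's argument. The paper likewise observes (1)$\Rightarrow$(2), gets (2)--(6) from Corollary~\ref{cor:6.9} (Theorem~\ref{th:6.8} with $R$ viewed as a locally finite $F$-algebra), and closes the cycle with \cite[3.3.9]{Pa_ShWe_86} for (2)$\Rightarrow$(1). Your fallback is not needed, but it would not be routine: the finite-dimensional subalgebras $B$ have unbounded dimension, so the local solvable-by-finite structures do not obviously assemble into a global (locally solvable)-by-(locally finite) one, and that assembly is exactly what the cited result supplies.
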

 
\begin{question} Let $A$ be a locally finite algebra over a field $F$, $n$ a positive integer, and $G$ a subgroup of the general linear group $\mathrm{GL}_n(A)$. If $G$ is  a locally (solvable-by-finite), then is it true that $G$ is (locally solvable)-by-(locally finite)?
\end{question}
Now, we are ready to give the positive answer to 
the GBP for linear groups over locally finite algebras.
\begin{theorem}\label{th:6.11} Let $A$ be a locally finite algebra over a field $F$, $n$ a positive integer, and $G$ a subgroup of the general linear group $\mathrm{GL}_n(A)$. If $G$ is periodic, then $G$ is locally finite. 
\end{theorem}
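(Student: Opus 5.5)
To show $G$ is locally finite, it suffices to show that every finitely generated subgroup $H$ of $G$ is finite. So fix $H=\langle x_1,\dots,x_r\rangle\le G$. By Lemma~\ref{lem:6.7}, there is a finite-dimensional $F$-subalgebra $B$ of $A$ with $H\le \mathrm{GL}_n(B)$. We may take $B$ to be the subalgebra generated by the entries of the $x_i$; note that inverses of the $x_i$ also lie in $\mathrm{M}_n(B)$, since $\mathrm{M}_n(B)$ is a finite-dimensional algebra in which an element invertible in $\mathrm{M}_n(A)$ is already invertible. By Lemma~\ref{lem:6.1}, $H$ then embeds into $\mathrm{GL}_{nd}(F)$, where $d=\dim_F B$. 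Hence $H$ is a finitely generated periodic linear group over the field $F$.

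At this point I would invoke Schur's theorem, quoted in the introduction: every finitely generated periodic linear group over a field is finite (see \cite{Pa_Schur_1911} or \cite[(9.9)]{Bo_Lam_2001}). This gives that $H$ is finite, and therefore $G$ is locally finite.

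Alternatively, staying within the paper's own machinery, one can argue as follows. A periodic group contains no non-cyclic free subgroup, so Lemma~\ref{lem:6.7} shows that $H$ is solvable-by-finite. Let $S\trianglelefteq H$ be solvable with $H/S$ finite. Then $S$ is a periodic solvable group, hence locally finite by \cite[1.3.5]{Bo_Lennox_2004}. Consequently $H$ is locally finite, as an extension of a locally finite group by a finite group. Being finitely generated, $H$ is finite. Equivalently, one may apply Theorem~\ref{th:6.8} to see that $G$ is locally (solvable-by-finite), and then run the same argument on each finitely generated subgroup.

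The only step requiring any care is the reduction to a finite-dimensional algebra, that is, ensuring $H\le\mathrm{GL}_n(B)$ rather than merely $H\subseteq \mathrm{M}_n(B)$. This is handled by the finite-dimensionality argument above: left multiplication by an invertible element is an injective, hence bijective, linear map on $\mathrm{M}_n(B)$. Everything else is a direct citation.
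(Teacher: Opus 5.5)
Your proof is correct, and your main argument takes a different route from the paper's. The paper argues globally. Since $G$ is periodic, it has no non-cyclic free subgroups, so Theorem~\ref{th:6.8} makes $G$ locally (solvable-by-finite); that theorem rests on Tits' alternative through Lemma~\ref{lem:6.7} and Corollary~\ref{cor:6.4}. Corollary~\ref{cor:6.9} then makes $G$ locally generalized radical, and Proposition~\ref{pro:4.8} (a periodic locally generalized radical group is locally finite) finishes. You instead take a single finitely generated subgroup $H$, push it into $\mathrm{GL}_{nd}(F)$ via Lemma~\ref{lem:6.1}, and quote Schur's theorem. Your route is shorter and uses a much lighter tool than Tits' alternative. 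It shows that Theorem~\ref{th:6.11} is really Schur's theorem transported along the embedding of Lemma~\ref{lem:6.1}. The paper's route instead presents it as a corollary of its structural result, Theorem~\ref{th:6.8}, in keeping with the theme of the section. Your fallback argument is essentially the paper's proof, with Proposition~\ref{pro:4.8} replaced by the more special fact that periodic solvable groups are locally finite. Finally, your explicit check that each $x_i^{-1}$ lies in $\mathrm{M}_n(B)$ fills a point that the paper's Lemma~\ref{lem:6.7} passes over silently. For periodic $x_i$ it is also immediate from $x_i^{-1}=x_i^{m-1}$.
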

\begin{proof} Assume that $G$ is a periodic subgroup of the general linear group $\mathrm{GL}_n(A).$ Since free groups can not be periodic, it follows from Theorem~\ref{th:6.8} that $G$ is locally (solvable-by-finite). By Corollary~\ref{cor:6.9} and Proposition~\ref{pro:4.8}, $G$ is locally finite. 
\end{proof}
\begin{corollary}\label{cor:6.12} Every periodic skew linear group over a weakly locally finite division ring is a locally finite group.
\end{corollary}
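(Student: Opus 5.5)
The plan is to reduce Corollary~\ref{cor:6.12} directly to Theorem~\ref{th:6.11}. Let $D$ be a weakly locally finite division ring with center $F$, let $n$ be a positive integer, and let $G\le \mathrm{GL}_n(D)$ be periodic. Since local finiteness of a group is a statement about its finitely generated subgroups, it suffices to show that every finitely generated subgroup $H=\langle x_1,\dots,x_r\rangle$ of $G$ is finite.

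First, localize the ring. Let $S\subseteq D$ be the finite set of all entries of the matrices $x_1^{\pm1},\dots,x_r^{\pm1}$. Let $P$ be the prime subfield of $D$, and let $K$ be the division subring of $D$ generated by $S$ (it contains $P$). By the definition of a weakly locally finite division ring, the division subring generated by a finite subset is centrally finite, so $K$ is finite-dimensional over its center $Z(K)$. All inverses of the generators have entries in $K$, so $H\le \mathrm{GL}_n(K)$.

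Next, view $K$ as an algebra over $Z(K)$. Since $K$ is finite-dimensional over the field $Z(K)$, it is in particular a locally finite $Z(K)$-algebra. Then $H$ is a periodic subgroup of $\mathrm{GL}_n(K)$ over a locally finite algebra. By Theorem~\ref{th:6.11}, $H$ is locally finite. Being finitely generated, $H$ is therefore finite. Alternatively, one can apply Lemma~\ref{lem:6.1} to embed $H$ in $\mathrm{GL}_{nm}(Z(K))$ with $m=[K:Z(K)]$, and then invoke Schur's theorem.

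The only step needing care is the first: $D$ itself need not be a locally finite algebra over $F$, so Theorem~\ref{th:6.11} cannot be applied to $D$ over $F$ directly. The point is to change the base field to the center of the finitely generated division subring $K$, which is exactly what weak local finiteness provides. Since finitely generated subgroups of $G$ are finite, $G$ is locally finite.
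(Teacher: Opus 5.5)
Your proposal is correct and follows the paper's proof essentially verbatim: the paper also passes to the centrally finite division subring generated by the entries of the generators, regards it as a finite-dimensional (hence locally finite) algebra over its center, and applies Theorem~\ref{th:6.11} to the finitely generated subgroup to conclude that it is finite. Including the entries of the inverses $x_i^{-1}$ is harmless but unnecessary. A matrix over a division subring that is invertible over $D$ is already invertible over that subring, so the paper uses only the entries of $x_1,\dots,x_r$.
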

\begin{proof} Let $D$ be a weakly locally finite division ring, $n$ a positive integer, and $G$ a subgroup of the group $\mathrm{GL}_n(D)$. Suppose that $G$ is periodic. We have to show that $G$ is a locally finite group. Let $H=\langle x_1, x_2, \ldots, x_r\rangle$ be a finitely generated subgroup of $G$. Denote by $S$ the set of all entries of all matrices $x_1, x_2, \ldots, x_r$, and by $D_1$ the division subring of $D$ generated by $S$. Then, $D_1$ is a centrally finite division ring. Clearly, $H$ is a subgroup of the general skew linear group $\mathrm{GL}_n(D_1)$. Viewing $D_1$ as an algebra over its center $Z(D_1)$, by Theorem \ref{th:6.11}, $H$ is locally finite, and so it is finite. Hence, we conclude that $G$ is a locally finite group.
\end{proof}
\begin{theorem}\label{th:6.13} Let $A$ be a locally finite algebra over a field $F$, $n$ a positive integer, and $G$ a subgroup of the general linear group $\mathrm{GL}_n(A)$. If $G$ is finitely generated generalized radical, then $G$ has a subnormal series $$1\trianglelefteq N\trianglelefteq K\trianglelefteq H\trianglelefteq G$$ such that $N$ is nilpotent, $K/N$ is polycyclic, $H/K$ is finite, $G/H$ is finite, and $H$ is solvable. Additionally, if $G$ is irreducible, then $G$ is polycyclic-by-finite. Consequently, every finitely generated irreducible generalized radical linear group over a locally finite algebra can be embedded into the group $\mathrm{GL}_n(\mathbb{Z})$ for some $m\geq 1$, where $\mathbb{Z}$ is the ring of integers.
\end{theorem}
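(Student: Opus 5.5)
The plan is to reduce to linear groups over a field and then apply Mal'cev's triangularization theorem for solvable linear groups.

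\emph{Reduction and the subgroup $H$.} Since $G$ is finitely generated, Lemma~\ref{lem:6.7} places $G$ inside $\mathrm{GL}_n(B)$ for some finite-dimensional $F$-subalgebra $B$ of $A$. By Lemma~\ref{lem:6.1}, $G$ then embeds into $\mathrm{GL}_{nr}(F)$ with $r=\dim_FB$. As $G$ is generalized radical, Theorem~\ref{th:6.5} gives a solvable normal subgroup $H_0$ of $G$ with $G/H_0$ locally finite. Because $G/H_0$ is also finitely generated, it is finite. Put $H=H_0$. Then $H$ is solvable and of finite index in $G$, hence finitely generated.

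\emph{Constructing $K$ and $N$.} Now regard $H$ as a solvable subgroup of $\mathrm{GL}_{nr}(F)$, or of $\mathrm{GL}_{nr}(\overline F)$ over the algebraic closure. By Mal'cev's theorem (see e.g.\ \cite{Bo_Dixon_2017}), $H$ has a normal subgroup $K_0$ of finite index that is triangularizable. To keep the series well behaved under conjugation by $G$, I would replace $K_0$ by its core $K$ in $G$. This $K$ is still of finite index in $H$, still triangularizable, and normal in $G$. Let $N$ be the set of unipotent elements of $K$. In a triangularizing basis, $N$ is the kernel of the homomorphism from $K$ to the diagonal, so $N$ is a normal subgroup of $K$. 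It is nilpotent, being a subgroup of the unitriangular group, and $K/N$ embeds in a diagonal group, so $K/N$ is abelian. Since $K$ has finite index in the finitely generated group $G$, $K$ is finitely generated. Hence $K/N$ is a finitely generated abelian group, and therefore polycyclic. We already have $H/K$ finite and $G/H$ finite, which gives the series $1\trianglelefteq N\trianglelefteq K\trianglelefteq H\trianglelefteq G$ with the stated factors.

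\emph{The irreducible case.} Note that $N$ is the set of unipotent elements of $K$, a property independent of the basis, so $N$ is characteristic in $K$ and hence normal in $G$. If $G$ is irreducible, consider the subspace $V^N$ of vectors fixed by $N$. Because $N$ is a unipotent group, $V^N$ is nonzero (Kolchin). Because $N\trianglelefteq G$, $V^N$ is $G$-invariant. Irreducibility then forces $V^N=V$, so $N=1$. Consequently $K$ is polycyclic and $G$ is polycyclic-by-finite. The Auslander--Swan Theorem \cite[3.3.1]{Bo_Lennox_2004} then embeds $G$ into $\mathrm{GL}_m(\mathbb{Z})$ for some $m\ge 1$.

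The main obstacle is what \emph{irreducible} means in this step. Irreducibility of $G$ as a subgroup of $\mathrm{GL}_n(A)$, acting on $A^n$, need not survive the embedding into $\mathrm{GL}_{nr}(F)$. I would interpret irreducibility on $A^n$ and run the argument there. The subspace $\{v\in A^n : xv=v \text{ for all } x\in N\}$ is a $G$-invariant submodule, and I expect it to be nonzero because $N-1$ generates a nilpotent subalgebra, so some product of terms $x-1$ with $x\in N$ is nonzero but becomes $0$ after one more factor. If this fails over $A$, I would assume irreducibility of the image in $\mathrm{GL}_{nr}(F)$ instead.
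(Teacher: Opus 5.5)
Your argument is correct. It shares the paper's skeleton: reduce to $\mathrm{GL}_m(F)$ via Lemmas~\ref{lem:6.7} and~\ref{lem:6.1}, extract a solvable normal subgroup $H$ of finite index, use the structure of solvable linear groups, and finish with Auslander--Swan. Where it differs is in how you get the structure.

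The paper cites \cite[Theorem 1.4.7]{Bo_Dixon_2017} to get $H$ nilpotent-by-(abelian-by-finite) and builds $K$ abstractly from the polycyclic-by-finite quotient $H/N$. In the irreducible case it applies \cite[Theorem 1.4.6]{Bo_Dixon_2017} (irreducible solvable linear groups are abelian-by-finite) to $H$, on the grounds that ``a subgroup of an irreducible group is irreducible''.

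You instead use Mal'cev triangularization, pass to the core in $G$, and take $N$ to be the unipotent elements of $K$. This gives a sharper series, with $N,K\trianglelefteq G$ and $K/N$ abelian. In the irreducible case, your Kolchin fixed-space argument for $N\trianglelefteq G$ uses only the irreducibility of $G$ itself.

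That is a genuine gain. The paper's claim that $H$ inherits irreducibility is false in general: the diagonal subgroup of the irreducible monomial group of order $8$ in $\mathrm{GL}_2(\mathbb{Q})$ is reducible. Repairing it needs Clifford's theorem plus the completely reducible version of Mal'cev's theorem, and you need neither. What the paper's route buys is brevity.

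Your hedge about the meaning of irreducibility is also unnecessary. The $F$-subalgebra of $\mathrm{M}_n(B)$ generated by $\{x-1 : x\in N\}$ is strictly triangular in a triangularizing basis, hence nilpotent. So if $N\neq 1$ there is a nonzero product $P$ of such factors of maximal length, and $xP=P$ for all $x\in N$. Thus the $N$-fixed vectors of $A^n$ form a nonzero $G$-invariant right $A$-submodule, since they contain a nonzero column of $P$. Irreducibility on $A^n$ forces this submodule to be $A^n$, so $N$ fixes $e_1,\ldots,e_n$ and $N=1$. Your argument therefore works under either reading, with no fallback needed.

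One wording fix: $N$ need not be characteristic in $K$. For example, in $\langle u\rangle\times\langle 2I\rangle\cong\mathbb{Z}^2$ with $u$ a nontrivial unipotent over $\mathbb{Q}$, an abstract automorphism can swap the two factors. What you need, and what holds, is that conjugation by $g\in G$ is a matrix conjugation, so it preserves unipotence, and $gKg^{-1}=K$; hence $gNg^{-1}=N$.
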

\begin{proof} Assume that $G$ is a finitely generated generalized radical subgroup of $\mathrm{GL}_n(A).$ According to Corollary~\ref{cor:6.9}, $G$ is solvable-by-finite.
Let $H$ be a solvable normal subgroup of $G$ such that $G/H$ is finite. Since $G$ is finitely generated and $H$ is a subgroup of finite index in $G$, it follows that $H$ is also finitely generated. In view of Lemmas~\ref{lem:6.7} and \ref{lem:6.1}, $H$ can be considered as a subgroup of the  general linear group $\mathrm{GL}_m(F)$ for some $m\geq 1.$ By~\cite[Theorem 1.4.7]{Bo_Dixon_2017}, $H$ is nilpotent-by-(abelian-by-finite). Then, $H$ contains a nilpotent normal subgroup $N$ such that $H/N$ is abelian-by-finite. Observe that every finitely generated abelian group is polycyclic, and $H/N$ is finitely generated. Hence, from the fact that $H/N$ is abelian-by-finite, it follows that $H/N$ is polycyclic-by-finite. Then, there exists a normal subgroup $K$ of $H$, containing $N$ such that $K/N$ is a polycyclic normal subgroup of $H/N$ and $$(H/N)/(K/N)\cong H/K$$
is finite. Hence, we get in $G$ the following series of subgroups
$$1\trianglelefteq N\trianglelefteq K\trianglelefteq H\trianglelefteq G,$$ 
where $N$ is nilpotent, $K/N$ is polycyclic, $H/K$ is finite, $G/H$ is finite, and $H$ is solvable.
Being a subgroup of a irreducible group, $H$ is irreducible. Then, according to~\cite[Theorem 1.4.6]{Bo_Dixon_2017}, $H$ contains an abelian normal subgroup $P$ such that the quotient group $H/P$ is finite. Since $H$ is finitely generated and $H/P$ is finite, it follows that $P$ is a finitely generated abelian group, and so $P$ is polycyclic. Hence, $H$ is polycyclic-by-finite. Recall that $G/H$ is finite, and observe that every finite group is obviously abelian-by-finite, and so is polycyclic-by-finite. It is known that the property of polycyclic-by-finite is preserved under extensions (\cite[Proposition~5.7]{Bo_SilberBAdd_2021}). Hence, it follows that $G$ is polycyclic-by-finite. The last statement follows from 
the Auslander-Swan Theorem (see e.g.,~\cite[3.3.1]{Bo_Lennox_2004}).
\end{proof}

We devote the remaining part of this section to study the automorphism group of a finitely generated linear group over a locally finite algebra.  Let $G$ be a group with its center $Z(G).$ Let $\mathrm{Aut}(G)$ be the automorphism group of $G.$ Recall that the inner automorphism group $\mathrm{Inn}(G)$ is normal in $\mathrm{Aut}(G),$ and $\mathrm{Inn}(G)$ is isomorphic to the group $G/Z(G).$ The factor group $\mathrm{Aut}(G)/\mathrm{Inn}(G)$ is said to be the \textit{outer automorphism group} of $G$ and is denoted by $\mathrm{Out}(G).$ 

\begin{theorem}\label{th:6.14} Let $A$ be a locally finite algebra over a field $F$, and $n$ a positive integer. Assume that $G$ is a finitely generated irreducible generalized radical subgroup of the general linear group $\mathrm{GL}_n(A).$ Then, the following statements hold:
	\begin{itemize}
		\item [(1)] If the automorphism group $\mathrm{Aut}(G)$ is periodic, then $\mathrm{Aut}(G)$ is finite, and so the derived subgroup $G'$ of $G$ is finite.
		\item [(2)] If $\mathrm{Out}(G)$ is periodic, then $\mathrm{Out}(G)$ is finite.
	\end{itemize}
\end{theorem}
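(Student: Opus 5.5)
By Theorem~\ref{th:6.13}, $G$ is a finitely generated irreducible generalized radical subgroup of $\mathrm{GL}_n(A)$, hence polycyclic-by-finite. The whole proof rests on this reduction: once $G$ is polycyclic-by-finite, we can use the classical structure theory of automorphism groups of such groups. The key input is the theorem of Auslander (see e.g.~\cite[Ch.~3]{Bo_Lennox_2004}, together with the Auslander--Swan Theorem already cited): the automorphism group of a polycyclic-by-finite group is isomorphic to a subgroup of $\mathrm{GL}_m(\mathbb{Z})$ for some $m\ge 1$. So $\mathrm{Aut}(G)$ is a linear group over the field $\mathbb{Q}$, and so is $\mathrm{Inn}(G)\cong G/Z(G)$.

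For (1), suppose $\mathrm{Aut}(G)$ is periodic. Since $\mathrm{Aut}(G)\le \mathrm{GL}_m(\mathbb{Q})$, Schur's Theorem (or Theorem~\ref{th:6.11} with $A=\mathbb{Q}$) shows that $\mathrm{Aut}(G)$ is locally finite. A periodic subgroup of $\mathrm{GL}_m(\mathbb{Z})$ is in fact finite: by Minkowski's theorem, reduction modulo $3$ has torsion-free kernel, so $\mathrm{Aut}(G)$ embeds in the finite group $\mathrm{GL}_m(\mathbb{Z}/3\mathbb{Z})$. Hence $\mathrm{Aut}(G)$ is finite. Then $\mathrm{Inn}(G)\cong G/Z(G)$ is finite, and Schur's theorem on central factor groups gives that $G'$ is finite.

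For (2), suppose $\mathrm{Out}(G)$ is periodic. The strategy is to show that $\mathrm{Out}(G)$ is itself isomorphic to a subgroup of some $\mathrm{GL}_k(\mathbb{Z})$, since then the Minkowski argument from (1) applies verbatim. The relevant result is Wehrfritz's theorem that $\mathrm{Out}(G)$ of a polycyclic-by-finite group $G$ is $\mathbb{Z}$-linear (see~\cite{Pa_ShWe_86} or Wehrfritz's work). If one prefers to avoid that theorem, there is an alternative route. Here $\mathrm{Inn}(G)\cong G/Z(G)$ is polycyclic-by-finite and normal in $\mathrm{Aut}(G)$, and $\mathrm{Aut}(G)$ is $\mathbb{Z}$-linear, hence residually finite and, by Tits' alternative, either contains a free subgroup or is solvable-by-finite. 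Let $X$ be a finitely generated subgroup of $\mathrm{Out}(G)$, and let $Y$ be its preimage in $\mathrm{Aut}(G)$. Since $Y$ is an extension of $\mathrm{Inn}(G)$ by the periodic group $X$, it contains no non-cyclic free subgroup, so $Y$ is solvable-by-finite and finitely generated. The solvable part of $Y$ is a solvable $\mathbb{Z}$-linear group, hence polycyclic by Mal'cev, so $Y$ is polycyclic-by-finite. Then $X$ is a periodic polycyclic-by-finite group, hence finite. Therefore $\mathrm{Out}(G)$ is locally finite.

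This alternative route gives only local finiteness, so to conclude that $\mathrm{Out}(G)$ is finite I still need a bound on the orders of finite subgroups of $\mathrm{Out}(G)$, and that bound again comes from an embedding of $\mathrm{Out}(G)$ into some $\mathrm{GL}_k(\mathbb{Z})$, combined with Minkowski's bound. The main obstacle is therefore step (2): producing a $\mathbb{Z}$-linear representation of $\mathrm{Out}(G)$. I would try first to cite Wehrfritz's theorem directly, rather than construct the representation by hand.
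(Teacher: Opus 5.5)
\paragraph{Part (1).} Your argument is correct but follows a different route from the paper. The paper quotes Eick's dichotomy \cite{Pa_Eick_2003}: for polycyclic-by-finite $G$, $\mathrm{Aut}(G)$ is either polycyclic-by-finite or contains a non-cyclic free subgroup. It excludes the free subgroup by periodicity, then uses that a periodic polycyclic-by-finite group is finite \cite[Lemma 6]{Zassenhaus_1969}. You go straight from Auslander's embedding $\mathrm{Aut}(G)\le\mathrm{GL}_m(\mathbb{Z})$ to Minkowski's torsion-free congruence kernel. This bypasses Tits' alternative altogether and even gives the explicit bound $|\mathrm{Aut}(G)|\le|\mathrm{GL}_m(\mathbb{Z}/3\mathbb{Z})|$. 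The intermediate local-finiteness step via Schur is unnecessary.

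\paragraph{Part (2).} Invoking the $\mathbb{Z}$-linearity of $\mathrm{Out}(G)$ does finish the proof. That theorem is indeed Wehrfritz's, but it comes from a later paper of his, not from \cite{Pa_ShWe_86}. However, the obstacle you describe is not real. Your fallback argument never needed finite generation, and run on $Y=\mathrm{Aut}(G)$ itself it gives finiteness directly.

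\begin{enumerate}
\item $\mathrm{Aut}(G)$ has no non-abelian free subgroup. If $a,b$ freely generated one, periodicity of $\mathrm{Out}(G)$ gives $k,l\ge 1$ with $a^k,b^l\in\mathrm{Inn}(G)$. Then $\langle a^k,b^l\rangle$ would be a non-abelian free subgroup of the polycyclic-by-finite group $\mathrm{Inn}(G)\cong G/Z(G)$, which is impossible.
\item In characteristic $0$, Tits' alternative \cite{Pa_Tits_1972} holds for arbitrary subgroups of $\mathrm{GL}_m(\mathbb{Q})$, not only finitely generated ones. So $\mathrm{Aut}(G)$ is solvable-by-finite.
\item Mal'cev's theorem that every solvable subgroup of $\mathrm{GL}_m(\mathbb{Z})$ is polycyclic also requires no finite generation.
\end{enumerate}

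Hence $\mathrm{Aut}(G)$, and with it $\mathrm{Out}(G)$, is polycyclic-by-finite. A periodic polycyclic-by-finite group is finite, so no bound on orders of finite subgroups is needed.

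This is precisely the paper's proof of (2). Eick's dichotomy is Auslander plus Tits plus Mal'cev. The paper rules out a free subgroup $L$ by showing $L\cap\mathrm{Inn}(G)=1$, since a nontrivial finitely generated normal subgroup of $L$ would have finite index; the powers argument above is a shorter way to the same conclusion.
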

\begin{proof} According to Theorem~\ref{th:6.13}, $G$ is polycyclic-by-finite. Then, either $\mathrm{Aut}(G)$ is polycyclic-by-finite or $\mathrm{Aut}(G)$ contains a non-cyclic free subgroup (see \cite{Pa_Eick_2003}). 
	
	(1) Suppose that the automorphism group 
	$\mathrm{Aut}(G)$ is periodic. Then, $\mathrm{Aut}(G)$ cannot contain non-cyclic free subgroups, and so it must be polycyclic-by-finite. In view of 	\cite[Lemma 6]{Zassenhaus_1969}, we conclude that $\mathrm{Aut}(G)$ is finite. Hence, the central factor group $G/Z(G)\cong \mathrm{Inn}(G)$ is finite, and by a theorem of Schur, the derived subgroup $G'$ of $G$ is finite, as required.
	
	(2) Suppose that the group $\mathrm{Out}(G)$ is periodic. We claim that the automorphism group $\mathrm{Aut}(G)$ contains no non-cyclic free subgroups. Suppose by contrary that $\mathrm{Aut}(G)$ contains a non-cyclic free subgroup, say $L$. Consider the following isomorphism
	$$L/(\mathrm{Inn}(G)\cap L)\cong \mathrm{Inn}(G)L/\mathrm{Inn}(G)\leq \mathrm{Out}(G).$$
	Since $G$ is polycyclic-by-finite, the group $\mathrm{Inn}(G)\cong G/Z(G)$ is polycyclic-by-finite, too. Then, $\mathrm{Inn}(G)\cap L$ is a polycyclic-by-finite free group. If $\mathrm{Inn}(G)\cap L$ is non-trivial, then, it follows from~\cite[Theorem 2.4]{Pa_ChuaHai_2026} that $\mathrm{Inn}(G)\cap L$ is cyclic. According to~\cite[Ch.1, Proposition~ 3.12]{Bo_LyPa_2001}, $\mathrm{Inn}(G)\cap L$ is a subgroup of finite index in $L$, and so $L$ is a cyclic-by-finite free group. Now, again by \cite[Theorem 2.4]{Pa_ChuaHai_2026}, $L$ is cyclic, a contradiction. Hence, $\mathrm{Inn}(G)\cap L$ must be trivial, which implies that $\mathrm{Out}(G)$ contains a non-cyclic free subgroup $\mathrm{Inn}(G)L/\mathrm{Inn}(G).$ But this is impossible because $\mathrm{Out}(G)$ is a periodic group, and so the claim is shown. Thus, the group $\mathrm{Aut}(G)$ is polycyclic-by-finite, and so $\mathrm{Out}(G)$ is polycyclic-by-finite, too. Now, in view of \cite[Lemma 6]{Zassenhaus_1969}, it follows that the group $\mathrm{Out}(G)$ is finite.
	
	The proof of the theorem is complete.
\end{proof}

In 1940, Mal'cev~\cite{Pa_mal'cev_1940} proved that every finitely generated linear group over a field is residually finite. The next theorem shows that the same result also holds for a finitely generated linear group over a locally finite algebra. Moreover, its automorphism group is residually finite.
\begin{theorem}\label{th:6.15} Let $A$ be a locally finite algebra over a field $F$, and $n$ a positive integer. Assume that $G$ is a finitely generated subgroup of the general linear group $\mathrm{GL}_n(A).$ Then, the following statements hold:
\end{theorem}
\begin{itemize}
\item [(1)] $G$ is residually finite.
\item [(2)] $\mathrm{Inn}(G)$ is residually finite.
\item [(3)] $\mathrm{Aut}(G)$ is residually finite.
\item [(4)] $G$ and $\mathrm{Inn}(G)$ are Hopfian.
\end{itemize}
\begin{proof} Assume that $G$ is a finitely generated subgroup of $\mathrm{GL}_n(A).$ Then, according to Lemma~\ref{lem:6.7} and Lemma~\ref{lem:6.1}, $G$ can be embedded into the general linear group $\mathrm{GL}_m(F)$ for some $m\geq 1.$ Thus, by a theorem of Mal'cev~\cite{Pa_mal'cev_1940}, $G$ is residually finite. By a theorem of G. Baumslag~\cite[Theorem 1]{Pa_Baumslag_1963}, $\mathrm{Aut}(G)$ is residually finite. Being a subgroup of a residually finite group, the group $\mathrm{Inn}(G)$ is residually finite, too. The final assertion of the theorem immediately follows from~\cite[Theorem 2.4.3]{Bo_SilberCoor_2010}.
\end{proof}
\section{Locally generalized radical linear groups over $\mathrm{PI}$-algebras}\label{S7}
Let $R$ be a ring. Recall that the \textit{Levitzky radical} of $R$, denoted by $L(R)$, is the sum of all locally nilpotent ideals of $R$. It is known that this sum is also a locally nilpotent ideal of $R$ (see, e.g. \cite[Proposition 1, p.197]{Bo_Jacobson_1964}). Hence, $L(R)$ is a unique maximal locally nilpotent ideal of $R$. Observe that the Levitzky radical $L(R)$ is contained in the Jacobson radical $J(R)$. Indeed, if $a\in L(R),$ then $a$ is a nilpotent element, which implies that $1+a$ is invertible, and so $a\in J(R)$. 

Now, let $A$ be an algebra over a field $F,$ and let $F[x_1,x_2,\ldots, x_r]$ be the free algebra generated by $r$ indeterminates over $F.$ The algebra $A$ is said to be a \textit{$\mathrm{PI}$-algebra} if there exists a non-zero element $f\in F[x_1,x_2,\ldots, x_r]$ such that $f(a_1,a_2,\ldots, a_r)=0$ for all $a_i\in A.$ It is easy to see that if $A$ is a $\mathrm{PI}$-algebra, then the matrix ring $\mathrm{M}_n(A)$ is also a $\mathrm{PI}$-algebra for each $n\in\mathbb{N}.$ 
\begin{lemma}\label{lem:7.1} Let $A$ be a $\mathrm{PI}$-algebra, and $L=L(A)$ its Levitzky radical.  Then, $$G(L):=1+L=\{1+a\mid a\in L\}$$ is a locally nilpotent normal subgroup of the unit group $A^\times$ and $$A^\times/G(L)\cong (A/L)^\times.$$ Moreover, the following assertions hold:
\begin{itemize}
\item [(1)] The quotient group $A^\times/G(L)$ can be embedded into the general linear group $\mathrm{GL}_m(R)$ of degree $m\geq 1$ over a semiprime commutative ring $R.$
\item [(2)] If $H$ is a subgroup of $A^\times,$ then $H$ contains a locally nilpotent normal subgroup $N$ such that every finitely generated subgroup of $H/N$ can be embedded into a direct product $\prod_{i=1}^{r}\mathrm{GL}_m(F_i),$ where $F_i$ is a field for all $i$.
     \end{itemize}
\end{lemma}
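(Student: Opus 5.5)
We first treat the statement about $G(L)$, arguing as in Lemma~\ref{lem:5.1}. Every element of $L=L(A)$ is nilpotent, so $L\subseteq J(A)$, and Lemma~\ref{lem:5.1}(2) gives that $G(L)$ is a normal subgroup of $A^\times$ with $A^\times/G(L)\cong (A/L)^\times$.

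For local nilpotency, take $x_1=1+a_1,\dots,x_k=1+a_k$ in $G(L)$. The subring $S$ (without identity) generated by $a_1,\dots,a_k$ lies in $L$. Since $L$ is locally nilpotent, $S$ is nilpotent, say $S^t=0$. Let $I$ be the ideal of the unital subalgebra $F\langle a_1,\dots,a_k\rangle$ generated by the $a_i$. Then $I=S$, because $a_i\cdot 1$ is already in $S$. So $I$ is a nilpotent ideal of this subalgebra. The inverses $x_i^{-1}=1-a_i+a_i^2-\cdots$ are finite sums and also lie in $1+S$. Lemma~\ref{lem:5.1}(3), applied inside the subalgebra, then shows that $\langle x_1,\dots,x_k\rangle\le G(S)$ is nilpotent of class at most $t$. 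Hence $G(L)$ is locally nilpotent.

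For (1), we use two classical facts. First, $A/L$ is semiprime: for PI-algebras the Levitzky radical equals the lower nil radical by Amitsur/Levitzky, and in any case $L(A/L(A))=0$, which forces semiprimeness. Second, $A/L$ is again a PI-algebra. We then invoke the embedding theorem for semiprime PI-algebras. Rowen's/Posner's theorem with Kaplansky's theorem shows that a semiprime PI-algebra satisfying a polynomial identity of degree $d$ embeds in $\mathrm{M}_m(C)$, where $m\le d/2$ and $C$ is a commutative semiprime ring. This $C$ can be taken to be a product of fields, or the central extension/ring of quotients $\prod$ of the centers of the prime factors. Concretely, a semiprime ring is a subdirect product of prime PI rings $A/P$. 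Each prime PI ring embeds into $\mathrm{M}_{m_P}(K_P)$ with $K_P$ a field and $m_P\le d/2$ by Posner's theorem. After enlarging each to a common size $m$ (via $\mathrm{M}_{m_P}\hookrightarrow \mathrm{M}_m$ using a unital embedding, or by padding with a tensor by a matrix algebra), the product embeds in $\mathrm{M}_m(R)$ with $R=\prod_P K_P$, which is commutative and semiprime. Restricting to units gives $(A/L)^\times\hookrightarrow \mathrm{GL}_m(R)$. The delicate point is making the degrees uniform and unital. Where $m_P\mid m$ fails, I would embed $\mathrm{M}_{m_P}(K)$ into $\mathrm{M}_{m!}(K)$ block-diagonally, with $m=\lfloor d/2\rfloor$.

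For (2), we set $N=H\cap G(L)$, which is locally nilpotent and normal in $H$. Then $H/N$ embeds into $\mathrm{GL}_m(R)$ with $R\subseteq\prod_P K_P$. A finitely generated subgroup involves finitely many matrices, whose entries generate a finitely generated subring $R_0$ of $R$. A finitely generated commutative semiprime ring is noetherian and reduced, so it has finitely many minimal primes $Q_1,\dots,Q_r$ with $\bigcap Q_i=0$. Hence $R_0\hookrightarrow\prod_{i=1}^r F_i$, where $F_i=\operatorname{Frac}(R_0/Q_i)$. This induces $\mathrm{GL}_m(R_0)\hookrightarrow\prod_i\mathrm{GL}_m(F_i)$, and the finitely generated subgroup lands there.

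The main obstacle I expect is step (1): citing the correct structure theorem for semiprime PI-algebras and producing a \emph{uniform} matrix size $m$ with a unital embedding.
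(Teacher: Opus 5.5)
Your proposal is correct and follows the same route as the paper: Lemma~\ref{lem:5.1} gives normality of $G(L)$ and $A^\times/G(L)\cong (A/L)^\times$. For (1) you use semiprimeness of $A/L$ and the embedding of a semiprime PI-algebra into $\mathrm{M}_m(R)$ with $R$ commutative semiprime, and for (2) you take $N=H\cap G(L)$ and use the finitely many minimal primes of a finitely generated reduced subring.

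Two of your details are more careful than the paper's. For local nilpotency, you work in the unital subalgebra $F\langle a_1,\dots,a_k\rangle$, where the nilpotent subalgebra generated by the $a_i$ is an ideal; the paper instead asserts that the ideal of $A$ generated by $a_1,\dots,a_s$ is nilpotent, which local nilpotency of $L$ does not directly give. For (1), you actually construct the embedding via Posner's and Kaplansky's theorems, padding block-diagonally to degree $m!$, whereas the paper only cites Jacobson's structure theory.
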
 
\begin{proof} Let $L$ be the Levitzky radical of a PI-algebra $A$. Since $L$ is locally nilpotent, it follows that $L$ is a nil ideal of $A$. Then, in view of Lemma~\ref{lem:5.1}, $G(L)$ is a normal subgroup of the unit group $A^\times$, and $A^\times/G(L)\cong (A/L)^\times.$

Assume that $$G_0=\langle 1+a_1,1+a_2,\ldots, 1+a_s\rangle$$
is a finitely generated subgroup of the group $G(L),$ where $a_1,a_2,\ldots, a_s\in L.$ Then, $I=\langle a_1,a_2,\ldots, a_s\rangle$ is a nilpotent ideal of $A.$ According to Lemma~\ref{lem:5.1}, $G(I)$ is a nilpotent normal subgroup of $A^\times$.  Being a subgroup of a nilpotent group $G(I)$, $G_0$ is also nilpotent, and so $G(L)$ is a locally nilpotent group.
	
(1) It is known that the Levitzky radical of $A$ is a semiprime ideal of $A$. Hence, the quotient algebra $A/L$ is semiprime, and so $A/L$ has no non-zero nilpotent ideal (see, e.g. ~\cite[(10.16) Proposition]{Bo_Lam_2001}). Being a factor algebra of a $\mathrm{PI}$-algebra, $A/L$ is a $\mathrm{PI}$-algebra. Then, in view of~\cite[\S 6, Theorem 2]{Bo_Jacobson_1964}, $A/L$ can be embedded in the matrix ring $\mathrm{M}_m(R)$ of degree $m$ over a semiprime commutative ring $R$ (see also the proof of \cite[Theorem~2]{Pa_Procesi_1966}). Since $A^\times/G(L)\cong (A/L)^\times,$ it follows that the quotient group $A^\times/G(L)$ can be embedded in the general linear group $\mathrm{GL}_m(R).$
	
(2) Assume that $H$ is a subgroup of $A^\times$, and put $N=H\cap G(L)$. Then, $N$ is a locally nilpotent normal subgroup of $H,$ and  
$$\overline{H}:=H/N\cong G(L)H/G(L).$$ 
In view of Part (1), $\overline{H}$ can be considered as a subgroup of the group $\mathrm{GL}_m(R),$ where $R$ is a semiprime commutative ring. Now, let $\overline{H}_0=\langle \overline{h}_1,\overline{h}_2,\ldots, \overline{h}_t\rangle$ be a finitely generated subgroup of $\overline{H}$, and $K$  be the subring of $R$ generated by all entries of all matrices $\overline{h}_1,\overline{h}_2,\ldots, \overline{h}_t.$ Then, $K$ is a Noetherian ring containing no  non-zero nilpotent ideals, and so $K$ is a semiprime commutative ring. Let $P_1, P_2,\ldots, P_r$ be all prime ideals of $K$, and $F_i$ is the quotient field of $K/P_i$ for all $i$.  Then, $0=\bigcap_{i=1}^r P_i$ and $\mathrm{GL}_m(K)$ is embedded in the finite direct product $\prod_{i=1}^{r}\mathrm{GL}_m(F_i).$ Observe that $\overline{H}_0\le \mathrm{GL}_m(K).$ Consequently, $\overline{H}_0$ can be embedded into $\prod_{i=1}^{r}\mathrm{GL}_m(F_i)$, and  the proof of the lemma is now complete. 
\end{proof}
The following theorem is the main result of this section.
\begin{theorem}\label{th:7.2} Let $A$ be a $\mathrm{PI}$-algebra over a field $F,$ and $n$ a positive integer. Assume that $G$ is a locally generalized radical subgroup of the general linear group $\mathrm{GL}_n(A).$ Then, the following assertions hold:
	\begin{itemize}
		\item [(1)] $G$ contains a locally nilpotent normal subgroup $N$ such that the quotient group $G/N$ is locally (solvable-by-finite). Additionally, if the group $G$ is finitely generated, then $G$ has a finite normal series 
			$$1\trianglelefteq N\trianglelefteq H\trianglelefteq G$$
		 such that $N$ is locally nilpotent, $H/N$ is solvable, and $G/H$ is finite.
		\item [(2)] Assume additionally that $A$  is affine, then $G$ is locally (solvable-by-finite).
		\item [(3)] Assume additionally that $A$ is algebraic affine, then $G$ is solvable-by-(locally finite).
	\end{itemize}	
\end{theorem}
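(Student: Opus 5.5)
Since $\mathrm{M}_n(A)$ is again a $\mathrm{PI}$-algebra over $F$ and $\mathrm{GL}_n(A)=\mathrm{M}_n(A)^\times$, the plan is to replace $A$ by $\mathrm{M}_n(A)$ and treat $G$ as a subgroup of the unit group of a $\mathrm{PI}$-algebra, so Lemma~\ref{lem:7.1} applies directly.

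For (1), Lemma~\ref{lem:7.1}(2) gives the locally nilpotent normal subgroup $N=G\cap G(L)$, with $L$ the Levitzky radical of $\mathrm{M}_n(A)$. Every finitely generated subgroup $\overline{H}_0$ of $G/N$ embeds in a finite product $\prod_{i=1}^r\mathrm{GL}_m(F_i)$ over fields. Because $G/N$ is a quotient of a locally generalized radical group, it is locally generalized radical (\cite[Lemma 2.1]{Pa_ChuaHai_2026}), so $\overline{H}_0$ is generalized radical. Hence, by \cite[Theorem 2.4]{Pa_ChuaHai_2026}, it has no non-cyclic free subgroup. Each projection of $\overline{H}_0$ into $\mathrm{GL}_m(F_i)$ is then a finitely generated linear group over a field without free subgroups, so Tits' Alternative \cite{Pa_Tits_1972} makes it solvable-by-finite. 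A subdirect product of finitely many solvable-by-finite groups is solvable-by-finite: intersect the finitely many preimages of the solvable normal subgroups of finite index. So $\overline{H}_0$ is solvable-by-finite, and $G/N$ is locally (solvable-by-finite).

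If $G$ is finitely generated, applying the same argument to $\overline{H}_0=G/N$ itself gives a solvable normal subgroup $H/N$ of finite index. This yields the series $1\trianglelefteq N\trianglelefteq H\trianglelefteq G$.

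For (2), I would take an affine $A$, so $\mathrm{M}_n(A)$ is affine $\mathrm{PI}$. By the Braun--Razmyslov theorem its Jacobson radical is nilpotent, and $J=L$ since nil implies locally nilpotent here. Then $N=G\cap(1+L)$ is nilpotent by Lemma~\ref{lem:5.1}(3), not merely locally nilpotent. For a finitely generated subgroup $K$ of $G$, part (1) makes $K/(K\cap N)$ solvable-by-finite while $K\cap N$ is nilpotent, so $K$ is solvable-by-finite. I expect this step to be the main obstacle, because it requires importing the nilpotence of the radical of an affine $\mathrm{PI}$-algebra. If that citation is unavailable, my fallback is to use Lemma~\ref{lem:5.1}(3) on the nilpotent ideal generated by the relevant finitely many elements of $L$, but this only controls finitely generated pieces of $K\cap N$, so more care would be needed.

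For (3), algebraic $\mathrm{PI}$-algebras are locally finite by Kaplansky--Shirshov. Combined with affineness, $A$ is finite-dimensional, and so is $\mathrm{M}_n(A)$. Then Theorem~\ref{th:6.5} applies verbatim: $G$ has a solvable normal subgroup with locally finite quotient, i.e.\ $G$ is solvable-by-(locally finite).
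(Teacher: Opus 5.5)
Your proposal is correct and follows the paper's proof essentially step for step: the same $N=G\cap G(L)$ from Lemma~\ref{lem:7.1} in (1), the Braun--Kemer--Razmyslov nilpotence of the Jacobson radical to make $N$ nilpotent in (2), and finite-dimensionality of $A$ plus Theorem~\ref{th:6.5} in (3). The differences are only in which results you cite. In (1) you get each projection solvable-by-finite from Tits' Alternative and the absence of non-cyclic free subgroups, where the paper cites \cite[Corollary 1.4.5]{Bo_Dixon_2017}; this absence does pass to the projections, since free generators of a quotient lift to free generators. In (3) you get finite-dimensionality from Kaplansky--Shirshov plus affineness, where the paper cites \cite[Corollary 3]{Pa_BelovRowenVishne_2006}.
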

\begin{proof} Let $A$ be a PI-algebra over $F$. Observe that the matrix algebra $\mathrm{M}_n(A)$ is also a PI-algebra over $F$. Let $L=L(\mathrm{M}_n(A))$ be the Levitzky radical of $\mathrm{M}_n(A)$.  Assume that $G$ is a locally generalized radical of the general linear group $\mathrm{GL}_n(A).$ 
	
(1) Set $N=G\cap G(L)$. According to the proof of Part (2) of Lemma~\ref{lem:7.1}, $N$ is a locally nilpotent normal subgroup of $G$ and every finitely generated subgroup $\overline{G}$ of the quotient group $G/N$ can be embedded into a finite direct product $\prod_{i=1}^{r}\mathrm{GL}_m(F_i)$ with $F_i$ is a field for all $i$. For every $i\in\{1,2,\ldots, r\},$ consider the canonical projections 
	$$\pi_i: \prod_{i=1}^{r}\mathrm{GL}_m(F_i)\longrightarrow \mathrm{GL}_m(F_i).$$ Since $G$ is locally generalized radical, it follows that the quotient group $G/N$ is also locally generalized radical, and so $\overline{G}$ is finitely generated generalized radical. Thus, $\pi_i(\overline{G})$ is finitely generated generalized radical in the general linear group $\mathrm{GL}_m(F_i).$ In view of \cite[Corollary 1.4.5]{Bo_Dixon_2017}, it follows that $\pi_i(\overline{G})$ is solvable-by-finite, and so the direct product $P=\prod_{i=1}^{r}\pi_i(\overline{G})$ is solvable-by-finite. Since  $\overline{G}$ is a subgroup of $P,$ it follows that $\overline{G}$ is solvable-by-finite, too. Consequently, we conclude that the quotient group $G/N$ is locally (solvable-by-finite). Now, if $G$ is finitely generated, then $G/N$ is also finitely generated, and so it is solvable-by-finite. Hence, there exists a normal subgroup $H$ of $G$ containing $N$ such that $H/N$ is solvable and $G/H$ is finite.
	Thus, $G$ has a finite normal series $$1\trianglelefteq N\trianglelefteq H\trianglelefteq G$$ such that $N$ is locally nilpotent, $H/N$ is solvable, and $G/H$ is finite. 
	
(2) By (1), $N=G\cap G(L)$ is a locally nilpotent normal subgroup of $G$ and $G/N$ is locally (solvable-by-finite). Assume additionally that $A$ is affine. Then, by the Braun-Kemer-Razmyslov Theorem (see~\cite[Theorem~ 1.2]{Pa_BelovRowen_2014}), the Jacobson radical $J(A)$ is nilpotent, and so by Lemma~\ref{lem:5.2}, the Jacobson group $\mathrm{JL}_n(A)$ is nilpotent. Since $L=L(\mathrm{M}_n(A))\subseteq J(\mathrm{M}_n(A)),$ it follows that $G(L)\subseteq \mathrm{JL}_n(A),$ and so $G(L)$ is nilpotent, which implies that $N$ is nilpotent. Finally, because $G/N$ is locally (solvable-by-finite), it can be easily to see that $G$ is locally (solvable-by-finite).
	
(3) The result follows  immediately from~\cite[Corollary 3]{Pa_BelovRowenVishne_2006} and Theorem~\ref{th:6.5}.
\end{proof}
\begin{corollary}\label{cor:7.3} Let $A$ be an affine $\mathrm{PI}$-algebra over a field $F,$  $n$ a positive integer, and $G$ a subgroup of the general linear group $\mathrm{GL}_n(A).$ Then, the following conditions are equivalent:
	\begin{itemize}
		\item [(1)] $G$ is locally (solvable-by-finite).
		\item [(2)] $G$ is locally (hyperabelian-by-finite).
		\item [(3)] $G$ is locally (radical-by-finite).
		\item [(4)] $G$ is locally nearly radical.
		\item [(5)] $G$ is locally generalized radical.
	\end{itemize}
\end{corollary}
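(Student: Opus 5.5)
The plan is to show (5) $\Rightarrow$ (1) via Theorem~\ref{th:7.2}(2). The remaining implications are formal, since each class is contained in the next:
\[
\text{solvable-by-finite}\subseteq\text{hyperabelian-by-finite}\subseteq\text{radical-by-finite}\subseteq\text{nearly radical}\subseteq\text{generalized radical}.
\]
These inclusions give (1) $\Rightarrow$ (2) $\Rightarrow$ (3) $\Rightarrow$ (4) $\Rightarrow$ (5) at the level of finitely generated subgroups. This is the same chain already used in the proof of Corollary~\ref{cor:6.6}, so I would justify each step by recalling the definitions from \cite{Pa_ChuaHai_2026}. Solvable groups are hyperabelian, and hyperabelian groups are radical. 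A radical-by-finite group is nearly radical by definition. A nearly radical group has an ascending normal series whose factors are locally nilpotent (the radical part) followed by a finite factor, so it is generalized radical.

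For the nontrivial direction, I would take $G$ locally generalized radical in $\mathrm{GL}_n(A)$ with $A$ an affine $\mathrm{PI}$-algebra. Theorem~\ref{th:7.2}(2) then gives directly that $G$ is locally (solvable-by-finite). To make the argument self-contained, I would recall its mechanism. The Braun--Kemer--Razmyslov theorem makes $J(A)$ nilpotent, so $N=G\cap G(L(\mathrm{M}_n(A)))$ is nilpotent. By Theorem~\ref{th:7.2}(1), $G/N$ is locally (solvable-by-finite). Hence every finitely generated $H\le G$ satisfies: $H\cap N$ is nilpotent and $H/(H\cap N)$ is solvable-by-finite, so $H$ is solvable-by-finite.

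The only point I expect to require care is the transition in the chain from "locally nearly radical" to "locally generalized radical". This requires the inclusion of nearly radical groups in generalized radical groups, which I would cite from \cite{Pa_ChuaHai_2026}, as was done in the Introduction. Everything else is bookkeeping. The corollary then follows as the analogue of Corollary~\ref{cor:6.9}, with Theorem~\ref{th:6.8} replaced by Theorem~\ref{th:7.2}(2).
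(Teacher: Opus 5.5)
Your proposal is correct and matches the paper's argument, which is simply that the result follows from Theorem~\ref{th:7.2}(2): the implications (1)$\Rightarrow$(2)$\Rightarrow$(3)$\Rightarrow$(4)$\Rightarrow$(5) are class inclusions applied to finitely generated subgroups, and (5)$\Rightarrow$(1) is Theorem~\ref{th:7.2}(2), whose mechanism you recall accurately (Braun--Kemer--Razmyslov makes $N=G\cap G(L)$ nilpotent, which is then combined with Theorem~\ref{th:7.2}(1)). One small slip: your gloss of a nearly radical group (locally nilpotent factors followed by a single finite factor) actually describes a radical-by-finite group, but this does not affect the proof, since all you use is the inclusion of nearly radical groups in generalized radical groups, which you cite from \cite{Pa_ChuaHai_2026}.
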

\begin{proof} The result  follows immediately from the conclusion (2) of Theorem~\ref{th:7.2}.
\end{proof}
\begin{theorem}\label{th:7.4} Let $A$ be an algebraic $\mathrm{PI}$-algebra over a field $F,$  $n$ a positive integer, and $G$ a subgroup of the general linear group $\mathrm{GL}_n(A)$. If $G$ is a locally generalized radical, then $G$ is locally (solvable-by-finite). Suppose further that $G$ is finitely generated.  Then, the following assertions hold:
	\begin{itemize}
		\item [(1)] $G$ has a subnormal series $$1\trianglelefteq N\trianglelefteq K\trianglelefteq H\trianglelefteq G$$
		such that $N$ is nilpotent, $K/N$ is polycyclic, $H/K$ is finite, $G/H$ is finite, and $H$ is solvable.
		\item [(2)] Moreover, if $G$ is additionally irreducible, then 
		  \begin{itemize}
		  	\item [(a)] $G$ is polycyclic-by-finite.
		  	\item [(b)] $\mathrm{Aut}(G)$ is periodic if and only if $\mathrm{Aut}(G)$ is finite. In this case, the derived subgroup $G'$ of $G$ is finite.
		  	\item [(c)] $\mathrm{Out}(G)$ is periodic if and only if $\mathrm{Out}(G)$ is finite.
		  \end{itemize}
	\end{itemize}
\end{theorem}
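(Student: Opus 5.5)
The plan is to reduce everything to the locally finite case handled in Section~\ref{S6}. The key input is the Kaplansky--Shirshov theorem recalled at the beginning of Section~\ref{S6}: every algebraic $\mathrm{PI}$-algebra over a field is locally finite. Since $A$ is an algebraic $\mathrm{PI}$-algebra over $F$, it follows that $A$ is a locally finite $F$-algebra. Therefore $G$ is a subgroup of $\mathrm{GL}_n(A)$ with $A$ locally finite, and all results of Section~\ref{S6} apply verbatim.

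For the first assertion, Theorem~\ref{th:6.8} (together with Corollary~\ref{cor:6.9}) gives directly that a locally generalized radical $G$ is locally (solvable-by-finite). Now assume $G$ is finitely generated. Then $G$ is itself generalized radical, being one of its own finitely generated subgroups.

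Part (1) is then exactly the first statement of Theorem~\ref{th:6.13}. Concretely, $G$ is solvable-by-finite, and we take $H$ solvable normal of finite index. Then $H$ is finitely generated, so by Lemmas~\ref{lem:6.7} and~\ref{lem:6.1} it embeds in some $\mathrm{GL}_m(F)$. Hence $H$ is nilpotent-by-(abelian-by-finite), which produces $N$ and $K$ as in that theorem.

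For Part (2), assume $G$ is irreducible.
\begin{itemize}
\item[(a)] This is the second statement of Theorem~\ref{th:6.13}.
\item[(b), (c)] These follow from Theorem~\ref{th:6.14}: a periodic $\mathrm{Aut}(G)$ (respectively $\mathrm{Out}(G)$) is finite, and finiteness of $\mathrm{Aut}(G)$ forces $G/Z(G)\cong\mathrm{Inn}(G)$ to be finite, hence $G'$ is finite by Schur's theorem. The converse implications are trivial, since finite groups are periodic.
\end{itemize}

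The only step needing care is the reduction itself. We must check that ``algebraic'' is meant over the same field $F$ over which $A$ satisfies a polynomial identity, so that Kaplansky--Shirshov yields local finiteness over $F$. We also need that the irreducibility hypothesis is understood in the same sense as in Theorem~\ref{th:6.13}, namely irreducibility of the image in $\mathrm{GL}_m(F)$ obtained via Lemma~\ref{lem:6.1}. If that compatibility is doubtful, I would restate the irreducibility hypothesis through that embedding, exactly as is implicitly done in Theorem~\ref{th:6.13}. Beyond this, no new argument is required.
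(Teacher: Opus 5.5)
Your proposal is correct and is essentially the paper's proof: Kaplansky--Shirshov makes $A$ locally finite, Theorem~\ref{th:6.8} then gives that $G$ is locally (solvable-by-finite), and (1), (2)(a) and (2)(b)--(c) are read off from Theorems~\ref{th:6.13} and~\ref{th:6.14}, the converse directions being trivial. Your caution about the meaning of ``irreducible'' is reasonable, but it concerns Theorem~\ref{th:6.13} itself, which the paper's proof cites in exactly the same way.
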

\begin{proof} Assume that $A$ is an algebraic $\mathrm{PI}$-algebra over a field $F.$ Then, by a result of I. Kaplansky and A. Shirshov, $A$ is a locally finite algebra (see, e.g. ~\cite[Ch.~10, \S12, Theorem 1]{Bo_Jacobson_1964}). If $G$ is a locally generalized radical subgroup of the  group $\mathrm{GL}_n(A),$ then $G$ is locally (solvable-by-finite) by Theorem~\ref{th:6.8}. The remaining assertions of the theorem follow from Theorems~\ref{th:6.13} and~\ref{th:6.14}.
\end{proof}
\begin{theorem}\label{th:7.5} Let $A$ be a $\mathrm{PI}$-algebra over a field $F$, $n$ a positive integer, and $G$ a subgroup of the general linear group $\mathrm{GL}_n(A).$ If $G$ is finitely generated, then the following statements hold:
	\begin{itemize}
		\item [(1)] $G$ is (locally nilpotent)-by-(residually finite).
		\item [(2)] If $A$ is affine, then $G$ is nilpotent-by-(residually finite). 
		\item [(3)] If $A$ is algebraic, then $G$ is residually finite, and so both the groups $\mathrm{Inn}(G)$ and $\mathrm{Aut}(G)$ are residually finite. Moreover, $G$ and $\mathrm{Inn}(G)$ are Hopfian. 
	\end{itemize}	
\end{theorem}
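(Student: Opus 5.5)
The plan is to reduce to finitely generated linear groups over commutative rings and fields, following the structure of Lemma~\ref{lem:7.1} and the proof of Theorem~\ref{th:7.2}. Throughout, $G$ is a finitely generated subgroup of $\mathrm{GL}_n(A)$, viewed inside the unit group of the $\mathrm{PI}$-algebra $\mathrm{M}_n(A)$. Let $L=L(\mathrm{M}_n(A))$ be its Levitzky radical and set $N=G\cap G(L)$.

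For (1), Lemma~\ref{lem:7.1} shows that $N$ is a locally nilpotent normal subgroup of $G$. Since $G$ is finitely generated, $\overline{G}=G/N$ is finitely generated. By Lemma~\ref{lem:7.1}(2), $\overline{G}$ embeds into a finite direct product $\prod_{i=1}^{r}\mathrm{GL}_m(F_i)$ of general linear groups over fields. Each projection $\pi_i(\overline{G})$ is then a finitely generated linear group over a field, so it is residually finite by Mal'cev's theorem~\cite{Pa_mal'cev_1940}. A subgroup of a finite direct product of residually finite groups is residually finite, because the kernels of the projections intersect trivially. Hence $G/N$ is residually finite, and $G$ is (locally nilpotent)-by-(residually finite).

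For (2), assume $A$ is affine. Then $\mathrm{M}_n(A)$ is also affine $\mathrm{PI}$. As in the proof of Theorem~\ref{th:7.2}(2), the Braun--Kemer--Razmyslov Theorem makes $J(\mathrm{M}_n(A))$ nilpotent. Since $L\subseteq J(\mathrm{M}_n(A))$, Lemma~\ref{lem:5.1}(3) shows that $G(L)\le \mathrm{JL}_n(A)$ is nilpotent. So $N$ is nilpotent, and combined with (1) this gives that $G$ is nilpotent-by-(residually finite).

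For (3), assume $A$ is algebraic. By the Kaplansky--Shirshov theorem (as used in Theorem~\ref{th:7.4}), $A$ is locally finite. Theorem~\ref{th:6.15} then applies directly: $G$ is residually finite, $\mathrm{Inn}(G)$ and $\mathrm{Aut}(G)$ are residually finite, and $G$ and $\mathrm{Inn}(G)$ are Hopfian.

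The main obstacle is the embedding step in (1). Lemma~\ref{lem:7.1}(2) embeds $H/N$ into $\mathrm{GL}_m(K)$, where $K$ is a finitely generated commutative semiprime subring. The proof there lists "all prime ideals" of $K$, but that set may be infinite; only the minimal primes are finite in number, since $K$ is noetherian. I would therefore use only the finitely many minimal primes. Their intersection is the nilradical, which is $0$, and this gives the required finite product. Alternatively, one can bypass fields entirely: a finitely generated subgroup of $\mathrm{GL}_m(K)$ over a finitely generated commutative ring is residually finite by Mal'cev's theorem in its commutative-ring form. Either route yields residual finiteness of $G/N$.
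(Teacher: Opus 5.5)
Your proposal is correct and follows the paper's proof essentially step by step. You use Lemma~\ref{lem:7.1}(2) together with Mal'cev's theorem for (1), the Braun--Kemer--Razmyslov theorem to get $G(L)\le \mathrm{JL}_n(A)$ nilpotent for (2), and Kaplansky--Shirshov plus Theorem~\ref{th:6.15} for (3). Your remark that the proof of Lemma~\ref{lem:7.1}(2) should use the finitely many minimal primes of $K$, whose intersection is the nilradical and hence $0$, rather than ``all prime ideals'' correctly repairs a slip in the paper, as does your Mal'cev-over-commutative-rings alternative.
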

\begin{proof}  Assume that $G$ is a finitely generated subgroup of the general linear group $\mathrm{GL}_n(A).$ Let $L=L(\mathrm{M}_n(A))$ be the Levitzky radical of the matrix algebra $\mathrm{M}_n(A)$.
	
(1) In view of Part (2) of Lemma~\ref{lem:7.1}, it follows that $G$ contains a locally nilpotent normal subgroup $N=G\cap L$ such that $G/N$ can be embedded into a finite direct product $\prod_{i=1}^{r}\mathrm{GL}_m(F_i)$, where  $F_i$ is a field for all $i$. For every $i\in\{1,2,\ldots, r\},$ let $$\pi_i: \prod_{i=1}^{r}\mathrm{GL}_m(F_i)\longrightarrow \mathrm{GL}_m(F_i)$$ be the canonical projection from the product $\prod_{i=1}^{r}\mathrm{GL}_m(F_i)$ to the group $\mathrm{GL}_m(F_i)$. Observe that $\pi_i(G/N)$ is finitely generated in the general linear group $\mathrm{GL}_m(F_i).$ Hence, by a theorem of Malcev~\cite{Pa_mal'cev_1940}, $\pi_i(G/N)$ is residually finite, and according to~\cite[Proposition 2.2.2]{Bo_SilberCoor_2010}, the direct product $P=\prod_{i=1}^{r} \pi_i(G/N)$ is residually finite. Note that $G/N$ is a subgroup of $P.$ Being a subgroup of a residually finite group, $G/N$ is residually finite. Hence, $G$ is (locally nilpotent)-by-(residually finite). 
	
(2) Assume that $A$ is an affine $\mathrm{PI}$-algebra over $F.$ Then, by the Braun-Kemer-Razmyslov Theorem (see~\cite[Theorem~ 1.2]{Pa_BelovRowen_2014}), the Jacobson radical $J(A)$ of $A$ is nilpotent, and in view of Lemma~\ref{lem:5.3}, $\mathrm{JL}_n(A)$ is nilpotent. Since the Levitzki radical $L(A)\subseteq J(A),$ it follows that  
$$L=L(\mathrm{M}_n(A))\subseteq J(\mathrm{M}_n(A)).$$
This shows that $G(L)\subseteq \mathrm{JL}_n(A),$ and so $G(L)$ is nilpotent, which implies that $N$ is nilpotent. Consequently, the group $G$ is nilpotent-by-(residually finite).
	
(3) Observe that if $A$ is algebraic, then, in view of~\cite[Ch.~10, \S12, Theorem 1]{Bo_Jacobson_1964}, $A$ is a locally finite algebra, and the result follows from Theorem~\ref{th:6.15}. 

The proof of the theorem is now complete. 
\end{proof}
\section{Noetherian linear groups over $\mathrm{PI}$-algebras}\label{S8}

In this section, we show that Baer's Conjecture holds for locally generalized radical groups. More precisely, we prove that every generalized radical noetherian group is polycyclic-by-finite. Also, we show that  Baer's Conjecture holds for linear groups over locally finite algebras and PI-algebras. As a consequence, we can re-obtain a theorem of Zassenhaus~\cite[Theorem 1]{Zassenhaus_1969}.

The following lemma is obvious.
\begin{lemma}\label{lem:8.1} Let $G$ be a group, $H\leq G$ and $N\trianglelefteq G.$ The following conditions hold:
	\begin{itemize}
		\rm\item [(1)]\textit{If $G$ is noetherian, then so is $H.$}
		\item [(2)]\textit{$G$ is noetherian if and only if $N$ and $G/N$ are noetherian.}
	\end{itemize}
\end{lemma}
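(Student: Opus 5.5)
The plan is to use the characterization of noetherian groups as those whose every subgroup is finitely generated (equivalently, every ascending chain of subgroups stabilizes), and to check the two assertions directly.

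For (1), let $H\le G$ and consider any subgroup $K\le H$. Since $K$ is also a subgroup of $G$, it is finitely generated, so $H$ is noetherian. Equivalently, an ascending chain of subgroups of $H$ is a chain in $G$ and therefore stabilizes.

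For the forward direction of (2), $N$ is noetherian by (1). For $G/N$, I use the correspondence theorem. Every subgroup of $G/N$ has the form $K/N$ with $N\le K\le G$. Since $K$ is finitely generated, its image $K/N$ is generated by the images of those generators. Hence $G/N$ is noetherian.

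For the converse, assume $N$ and $G/N$ are noetherian and let $K\le G$. Then $K\cap N\le N$ is finitely generated. Also
$$K/(K\cap N)\cong KN/N\le G/N,$$
so this quotient is finitely generated as well. Choose finitely many generators $x_1,\dots,x_s$ of $K\cap N$, and elements $k_1,\dots,k_t\in K$ whose images generate $K/(K\cap N)$. I claim $K=\langle x_1,\dots,x_s,k_1,\dots,k_t\rangle$. Take $k\in K$. Some word $w$ in the $k_j$ satisfies $k(K\cap N)=w(K\cap N)$. Then $w^{-1}k\in K\cap N$, which lies in the subgroup generated by the $x_i$. Hence $k$ lies in the claimed subgroup, and $K$ is finitely generated.

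There is no real obstacle here. The only step needing care is this extension argument in the converse of (2), namely that a group which is finitely generated-by-finitely generated is itself finitely generated. It is routine once the isomorphism $K/(K\cap N)\cong KN/N$ is invoked.
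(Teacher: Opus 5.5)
Your proof is correct. The paper gives no proof of this lemma at all; it simply calls it obvious. Your argument is the standard verification it implicitly relies on: subgroups of subgroups for (1), images of generators under $G\to G/N$ for the forward direction of (2), and lifting generators through $K/(K\cap N)\cong KN/N$ for the converse.
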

\begin{lemma}\label{lem:8.2} The following conditions are equivalent for a noetherian group $G.$
	\begin{itemize} 
		\rm\item [(1)]\textit{$G$ is polycyclic.}
		\item [(2)]\textit{$G$ is solvable}
		\item [(3)]\textit{$G$ is hyperabelian.}
		\item [(4)]\textit{$G$ is radical.}  
	\end{itemize}
\end{lemma}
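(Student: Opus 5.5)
The plan is to prove the cycle of implications (1) $\Rightarrow$ (2) $\Rightarrow$ (3) $\Rightarrow$ (4) $\Rightarrow$ (1). The first three hold for arbitrary groups. A polycyclic group is solvable by definition. A solvable group has a finite normal series with abelian factors (for instance the derived series), so it is hyperabelian. A hyperabelian group has an ascending normal series with abelian factors, and abelian groups are locally nilpotent, so it is radical. None of these steps uses the noetherian hypothesis.

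The substance is (4) $\Rightarrow$ (1), and here I would use the standing assumption that $G$ is noetherian, so that every subgroup of $G$ is finitely generated (Lemma~\ref{lem:8.1}(1)). Let
$$1=G_0\le G_1\le\cdots\le G_\alpha\le\cdots\le G_\gamma=G$$
be an ascending normal series of $G$ whose factors are locally nilpotent. The first step is to make this series finite. By the maximal condition on subgroups, every ascending chain of subgroups of $G$ is eventually constant. If $\gamma$ were a limit ordinal, then $G=\bigcup_{\alpha<\gamma}G_\alpha$ would be finitely generated, so all generators would lie in a single $G_\alpha$ and hence $G=G_\alpha$ for some $\alpha<\gamma$. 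Applying the same argument at every limit stage, and using that a strictly increasing well-ordered chain of subgroups in a noetherian group must be finite, we may delete repetitions and assume the series is a finite normal series $1=G_0\trianglelefteq G_1\trianglelefteq\cdots\trianglelefteq G_k=G$ with locally nilpotent factors. Each factor $G_{i+1}/G_i$ is noetherian by Lemma~\ref{lem:8.1}(2), hence finitely generated, so each locally nilpotent factor is in fact nilpotent.

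The second step is to show that a nilpotent noetherian group is polycyclic. Its lower central series is finite, and each lower central factor is an abelian group that is noetherian by Lemma~\ref{lem:8.1}, hence finitely generated abelian and therefore polycyclic. Refining the finite series from the first step by these lower central series (pulled back to $G$) produces a finite subnormal series of $G$ whose factors are cyclic. Thus $G$ is polycyclic.

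The main obstacle is the reduction from a transfinite series to a finite one. The thing to make precise is that a noetherian group cannot contain a strictly ascending chain of subgroups indexed by an infinite ordinal. This is immediate, because such a chain contains an $\omega$-subchain, and the maximal condition forbids an infinite strictly increasing sequence. Once the series is finite, the rest of the argument is routine. An alternative route for this step would be to cite the standard fact, found in \cite{Bo_Lennox_2004}, that radical groups satisfying the maximal condition are polycyclic.
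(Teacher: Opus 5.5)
Your proof is correct. The implications $(1)\Rightarrow(2)\Rightarrow(3)\Rightarrow(4)$ are handled exactly as in the paper. For the substantive step $(4)\Rightarrow(1)$, however, the paper gives no argument and simply cites \cite[Lemma 3.1]{Pa_ChNaHa_25}, whereas you prove it directly from the maximal condition.

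Your argument runs as follows. Every limit term of the ascending series is finitely generated and so equals an earlier term. Hence the series has only finitely many distinct terms, and each jump between consecutive distinct terms has the form $G_\beta\trianglelefteq G_{\beta+1}$. This is the one point worth stating explicitly, since it is what guarantees that the factors of the shortened series are original, locally nilpotent, factors. These factors are finitely generated, hence nilpotent, hence polycyclic, and refining gives a finite subnormal series with cyclic factors.

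Your route costs a paragraph but buys several things. It makes the lemma self-contained and uses nothing beyond Lemma~\ref{lem:8.1}. It works whether ``radical'' is defined by an ascending normal series or merely an ascending series, since polycyclicity only needs a subnormal series. It also uses the same limit-ordinal device the paper itself employs in the proof of Theorem~\ref{th:8.3}, so it fits that section naturally. The paper's citation buys only brevity.
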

\begin{proof} The implications $(1)\Rightarrow (2) \Rightarrow (3)\Rightarrow (4)$ are trivial, while the implication $(4)\Rightarrow (1)$ follows from~\cite[Lemma 3.1]{Pa_ChNaHa_25}.
\end{proof}
\begin{theorem}\label{th:8.3} Every generalized radical noetherian group is polycyclic-by-finite. Consequently, every generalized radical noetherian group can be embeded into the integer general linear group $\mathrm{GL}_n(\mathbb{Z})$ for some $n\geq 1.$
\end{theorem}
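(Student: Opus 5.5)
Let $G$ be a generalized radical noetherian group. The plan is to reduce to Lemma~\ref{lem:8.2} by exhibiting a radical (hence polycyclic) normal subgroup of finite index. Fix an ascending normal series $1=G_0\le G_1\le\cdots\le G_\gamma=G$ whose factors are locally finite or locally nilpotent. Since $G$ is noetherian, every ascending chain of subgroups stabilizes. Hence the series is in fact finite: a strictly increasing transfinite chain indexed by ordinals beyond $\omega$ would contain a strictly increasing $\omega$-chain. So after deleting repetitions we get a finite normal series $1=G_0<G_1<\cdots<G_k=G$. By Lemma~\ref{lem:8.1}, each factor $G_{i+1}/G_i$ is noetherian, so in particular finitely generated.

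Each factor is now finitely generated and either locally finite or locally nilpotent. It is therefore finite or nilpotent. A finitely generated nilpotent group is polycyclic. Thus $G$ is poly-(finite or polycyclic), and it remains to show that such a group is polycyclic-by-finite. The cleanest route is to cite that the class of polycyclic-by-finite groups is extension-closed (\cite[Proposition~5.7]{Bo_SilberBAdd_2021}, already used in the proof of Theorem~\ref{th:6.13}), and argue by induction on $k$. The base case holds since $G_1$ is finite or polycyclic. If $G_i$ is polycyclic-by-finite and $G_{i+1}/G_i$ is finite or polycyclic, then $G_{i+1}$ is polycyclic-by-finite.

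If one prefers to use Lemma~\ref{lem:8.2} explicitly, an alternative is available. Let $R$ be the radical (Hirsch--Plotkin style) part, namely the product of all radical normal subgroups. By the noetherian property this is a finitely generated radical normal subgroup, hence polycyclic by Lemma~\ref{lem:8.2}. One would then show that $G/R$ is finite. Either way, the extension-closure step is the only nonroutine point. I expect the main obstacle to be making the reduction from a transfinite series to a finite one precise. The argument above handles it, but one should check that the union at limit ordinals does not create a non-stabilizing chain. This is immediate because the maximal condition forbids any infinite strictly increasing chain.

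For the last assertion, a polycyclic-by-finite group embeds into $\mathrm{GL}_n(\mathbb{Z})$ for some $n$ by the Auslander--Swan Theorem (\cite[3.3.1]{Bo_Lennox_2004}), exactly as invoked in Theorems~\ref{th:5.7} and~\ref{th:6.13}.
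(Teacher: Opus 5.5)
Your proof is correct and follows essentially the same route as the paper. The paper runs a transfinite induction along the series and uses finite generation of $H_\alpha$ to dispose of limit ordinals, which is the same use of the maximal condition as your collapse to a finite series. It then argues as you do: each factor is finite or finitely generated nilpotent, hence polycyclic-by-finite, and it concludes with extension-closure (\cite[Proposition~5.7]{Bo_SilberBAdd_2021}) and Auslander--Swan. Your sketched alternative via the product $R$ of all radical normal subgroups is unfinished, since you never show $G/R$ is finite, but it is not needed.
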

\begin{proof} Suppose that $G$ is a generalized radical noetherian group. Since every subgroup of $G$ is also noetherian, the group $G$ has an ascending series
	$$1=H_0\le H_1\le\cdots H_\alpha\le H_{\alpha+1}\le\cdots H_\gamma=G$$
	whose factors are either nilpotent or finite.
	We proceed by transfinite induction on $\gamma.$ If $\gamma=1,$ then either $G$ is nilpotent or finite. In the first case, it follows from Lemma~\ref{lem:8.2} that $G$ is polycyclic. In the latter case, it follows that $G$ is finite, and so $G$ trivially is abelian-by-finite. According to Lemma~\ref{lem:8.2}, it follows that $G$ is a polycyclic-by-finite group. Hence, the conclusion follows for $\gamma=1$.
	
	Now, let $\gamma>1$, and consider an ordinal $\alpha\le \gamma$. Suppose inductively that $H_\beta$ is polycyclic-by-finite for all $\beta$ less than $\alpha$. We have to prove that $H_\alpha$ is also polycyclic-by-finite. If $\alpha$ is a limit ordinal, then 
	$$H_\alpha=\bigcup_{\beta<\alpha} H_\beta.$$
	Since $H_\alpha$ is finitely generated, there exists some ordinal $\beta_0<\alpha$ such that $H_\alpha=H_{\beta_0}$. By inductive hypothesis, we conclude that $H_\alpha$ is polycyclic-by-finite. If $\alpha$ is not a limit ordinal, then there exists $\alpha-1$, and $H_\alpha/H_{\alpha-1}$ is either nilpotent or finite. Hence, $H_\alpha/H_{\alpha-1}$ is either nilpotent or abelian-by-finite. In view of Lemmas~\ref{lem:8.1} and~\ref{lem:8.2}, it follows that the factor $H_\alpha/H_{\alpha-1}$ is polycyclic-by-finite. By inductive hypothesis, $H_{\alpha-1}$ is polycyclic-by-finite. Hence, in view of \cite[Proposition 5.7]{Bo_SilberBAdd_2021}, it follows that $H_\alpha$ is polycyclic-by-finite. Consequently, $G$ is polycyclic-by-finite. Thus, by the Auslander–Swan Theorem (see e.g.~\cite[Theorem 5.22]{Bo_SilberBAdd_2021}), $G$ embeds into the integer general linear group $\mathrm{GL}_n(\mathbb{Z})$ for some $n\geq 1,$ as required.
\end{proof}
\begin{corollary}\label{cor:8.4} The following conditions are equivalent for a noetherian group $G.$ 
	\begin{itemize}
		\rm\item [(1)]\textit{$G$ is polycyclic-by-finite.}
		\item [(2)]\textit{$G$ is solvable-by-finite}
		\item [(3)]\textit{$G$ is hyperabelian-by-finite.}
		\item [(4)]\textit{$G$ is radical-by-finite.}
		\item [(5)]\textit{$G$ is nearly radical.}
		\item [(6)]\textit{$G$ is generalized radical.}
	\end{itemize}
\end{corollary}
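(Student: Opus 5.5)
The plan is to prove the cycle of implications $(1)\Rightarrow(2)\Rightarrow(3)\Rightarrow(4)\Rightarrow(5)\Rightarrow(6)\Rightarrow(1)$ for a noetherian group $G$. All forward implications except the last are formal consequences of the inclusions among the group classes recalled in Section~\ref{Intro}. A polycyclic group is solvable, so $(1)\Rightarrow(2)$. Solvable groups are hyperabelian, hyperabelian groups are radical, and a radical-by-finite group is nearly radical; this gives $(2)\Rightarrow(3)\Rightarrow(4)\Rightarrow(5)$. For $(5)\Rightarrow(6)$, a nearly radical group has an ascending normal series whose factors are locally nilpotent except for one finite top factor. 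That is exactly a series with locally nilpotent or locally finite factors, so $G$ is generalized radical. None of these steps uses the noetherian hypothesis, and I will only cite the definitions from \cite{Pa_ChuaHai_2026}.

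The one substantive implication is $(6)\Rightarrow(1)$, which is precisely Theorem~\ref{th:8.3}: a generalized radical noetherian group is polycyclic-by-finite. I will simply invoke that theorem. So the corollary reduces to bookkeeping around Theorem~\ref{th:8.3}, together with Lemma~\ref{lem:8.2}, which explains why the listed ``$X$-by-finite'' conditions for $X\in\{\text{solvable},\text{hyperabelian},\text{radical}\}$ all collapse to polycyclic-by-finite once $G$ is noetherian.

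If a more self-contained route for some middle implications is wanted (say $(4)\Rightarrow(1)$ directly), I would argue as follows. Take a radical normal subgroup $R$ of finite index in $G$. By Lemma~\ref{lem:8.1}, $R$ is noetherian. Lemma~\ref{lem:8.2} then makes $R$ polycyclic, and $G/R$ is finite, so $G$ is polycyclic-by-finite.

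The only point needing care is $(5)\Rightarrow(6)$, i.e.\ checking that the paper's convention for ``nearly radical'' (radical-by-finite versus radical-by-locally-finite) really fits inside the definition of generalized radical. Under either reading the factors are locally nilpotent or (locally) finite, so this step should be immediate.
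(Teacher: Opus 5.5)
Your proposal is correct and matches the paper's proof: $(1)\Rightarrow\cdots\Rightarrow(6)$ follow from the inclusions among the classes, the cycle is closed by Theorem~\ref{th:8.3}, and Lemma~\ref{lem:8.2} accounts for the solvable, hyperabelian and radical conditions collapsing to polycyclic for noetherian groups. Your observation that $(5)\Rightarrow(6)$ holds under any reasonable reading of ``nearly radical'' is sound, since finite factors are locally finite.
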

\begin{proof} The result follows from Lemma~\ref{lem:8.2} and Theorem~\ref{th:8.3}.
\end{proof}
\begin{theorem}\label{th:8.5} A noetherian group contains no non-cyclic free subgroups. In particular, every free noetherian group is a cyclic group. 
\end{theorem}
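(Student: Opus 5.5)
The plan is to argue directly from the definition of a noetherian group as one in which every subgroup is finitely generated. By Lemma~\ref{lem:8.1}(1), every subgroup of a noetherian group is again noetherian. So it suffices to show that a non-cyclic free group is never noetherian. If $G$ contained a non-cyclic free subgroup $L$, then $L$ would be noetherian, and we will contradict this.

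The key step is to exhibit a subgroup of a non-cyclic free group that is not finitely generated. Let $L$ be free of rank at least $2$, and choose two free generators $x,y$; then $F_2=\langle x,y\rangle$ is a free subgroup of $L$. Consider the subgroup
$$K=\langle\, y^{-k}xy^{k}\mid k\in\mathbb{Z}\,\rangle .$$
We will show that the elements $y^{-k}xy^{k}$ freely generate $K$, so $K$ is free of countably infinite rank. One route is to view $K$ as the kernel of the homomorphism $F_2\to\mathbb{Z}$ sending $x\mapsto 0$ and $y\mapsto 1$, and apply Reidemeister--Schreier with transversal $\{y^k\}$. Alternatively, reduced words in the conjugates $y^{-k}xy^{k}$ remain reduced in $x,y$ after cancelling adjacent powers of $y$, which gives freeness directly.

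A free group of infinite rank is not finitely generated. Its abelianization is free abelian of infinite rank, whereas any finitely generated group has finitely generated abelianization. This contradicts the noetherian property of $L$, so no noetherian group contains a non-cyclic free subgroup.

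Once this is established, the ``in particular'' clause follows at once. A free noetherian group cannot be non-cyclic free, since it would then contain itself as a non-cyclic free subgroup. Hence it is free of rank $0$ or $1$, that is, trivial or infinite cyclic.

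The only step with real content is the freeness of the conjugates $y^{-k}xy^{k}$, or equivalently the infinite generation of the kernel $K$. I would cite Reidemeister--Schreier, or the Nielsen--Schreier index formula, for this rather than verify reduced words by hand. As an alternative route, I could note that the commutator subgroup of $F_2$ is free of infinite rank, which is a standard fact.
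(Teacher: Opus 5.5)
Your proof is correct and takes essentially the same route as the paper: find a free subgroup of infinite rank inside the non-cyclic free subgroup, and note that it cannot be finitely generated. The paper just cites Lyndon--Schupp (Ch.~1, Proposition~3.1) for that infinite-rank subgroup, whereas you construct it explicitly as the kernel of $\langle x,y\rangle\to\mathbb{Z}$, $x\mapsto 0$, $y\mapsto 1$, which is free on the conjugates $y^{-k}xy^{k}$.
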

\begin{proof} Suppose by contrary that $G$ is a noetherian group containing a non-cyclic free subgroup $L$. Hence, according to~\cite[Proposition 3.1]{Bo_LyPa_2001}, $L$ contains a free subgroup $N$ of infinite free rank. But this is impossible because $N$ is a noetherian group. The last statement is obvious.
\end{proof}
\begin{corollary}\label{cor:8.6} If $G$ is a free group, then the following conditions are equivalent:
	\begin{enumerate}
		\rm\item\textit{$G$ is noetherian.}
		\rm\item\textit{$G$ is cyclic.}
		\rm\item\textit{$G$ is cyclic-by-finite.}
		\rm\item\textit{$G$ is polycyclic-by-finite.}
		\rm\item\textit{$G$ is (locally solvable)-by-finite.}
		\rm\item\textit{$G$ is (locally solvable)-by-(locally finite).}
		\rm\item\textit{$G$ is locally (solvable-by-finite).}
		\rm\item\textit{$G$ is hyperabelian-by-finite.}
		\rm\item\textit{$G$ is radical-by-finite.}
		\rm\item\textit{$G$ is nearly radical.}
		\rm\item\textit{$G$ is generalized radical.}
		\rm\item\textit{$G$ is locally generalized radical.}
	\end{enumerate}
\end{corollary}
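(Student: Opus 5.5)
The plan is to prove the equivalences by arranging them in a cycle of implications. Most links are trivial. The only substantial input is that a free group is locally generalized radical only when it is cyclic, which is \cite[Theorem 2.4]{Pa_ChuaHai_2026} (a locally generalized radical group contains no non-cyclic free subgroups). This result has already been used in the proofs of Theorems~\ref{th:6.5} and~\ref{th:6.14}.

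For $(1)\Leftrightarrow(2)$ I will use Theorem~\ref{th:8.5}. If $G$ is a free noetherian group it is cyclic. Conversely, a cyclic group has every subgroup cyclic, hence finitely generated, so it is noetherian.

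The forward chain runs $(2)\Rightarrow(3)\Rightarrow(4)\Rightarrow(5)\Rightarrow(6)\Rightarrow(7)\Rightarrow(8)\Rightarrow(9)\Rightarrow(10)\Rightarrow(11)\Rightarrow(12)$. Each step is a containment of group classes, and I will justify them as follows:
\begin{itemize}
\item A cyclic group is cyclic-by-finite.
\item A cyclic-by-finite group is polycyclic-by-finite.
\item A polycyclic group is solvable, hence locally solvable.
\item A finite quotient is locally finite.
\item A (locally solvable)-by-(locally finite) group is locally (solvable-by-finite), as observed before Corollary~\ref{cor:6.10}.
\item For $(7)\Rightarrow(8)$, and for $(8)\Rightarrow(9)\Rightarrow(10)\Rightarrow(11)$, I will use the inclusions of classes recorded in the introduction and in \cite{Pa_ChuaHai_2026}.
\item Generalized radical implies locally generalized radical, since finitely generated subgroups of generalized radical groups are generalized radical (see \cite{Pa_ChuaHai_2026}).
\end{itemize}

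The one link I expect to need care is $(7)\Rightarrow(8)$, because local properties do not obviously globalize. I will sidestep this by closing the cycle differently. First, $(7)$ implies $(12)$ directly: a solvable-by-finite group is generalized radical, so a locally (solvable-by-finite) group is locally generalized radical. Second, $(12)$ implies $(2)$ by \cite[Theorem 2.4]{Pa_ChuaHai_2026}, since a non-cyclic free group would be a non-cyclic free subgroup of itself. Every one of $(8)$ through $(11)$ implies $(12)$ by the same class inclusions. So I will arrange the proof as two parts: the chain $(2)\Rightarrow(3)\Rightarrow\cdots\Rightarrow(7)\Rightarrow(12)\Rightarrow(2)$, together with $(2)\Rightarrow(8)$ (a cyclic group is abelian, hence hyperabelian) and $(8)\Rightarrow(9)\Rightarrow(10)\Rightarrow(11)\Rightarrow(12)$. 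The main obstacle is therefore only the bookkeeping of these class inclusions. The genuine content sits in Theorem~\ref{th:8.5} and \cite[Theorem 2.4]{Pa_ChuaHai_2026}.
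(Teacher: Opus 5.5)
Your proposal is correct and follows the paper's one-line argument. Theorem~\ref{th:8.5} gives $(1)\Leftrightarrow(2)$, the remaining links are class inclusions, and \cite[Theorem 2.4]{Pa_ChuaHai_2026} closes the cycle from the weakest condition $(12)$ back to $(2)$. Your detour $(7)\Rightarrow(12)$ together with $(2)\Rightarrow(8)$ is the right bookkeeping, which the paper leaves implicit: locally (solvable-by-finite) does not imply hyperabelian-by-finite in general (for example, infinite locally finite simple groups). Note that this makes your first bullet, which proposes to get $(7)\Rightarrow(8)$ from class inclusions, superfluous; drop it.
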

\begin{proof} The result follows from Theorem~\ref{th:8.5} and~\cite[Theorem 2.4]{Pa_ChuaHai_2026}.
\end{proof}
The next theorem shows that Baer's Conjecture holds for linear groups over a locally finite algebra.
\begin{theorem}\label{th:8.7} Let $A$ be a locally finite algebra over a field $F.$ If $G$ is a noetherian subgroup of the general linear group $\mathrm{GL}_n(A),$ then $G$ is polycyclic-by-finite.
\end{theorem}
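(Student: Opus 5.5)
Since $G$ is noetherian, every subgroup of $G$ (in particular $G$ itself) is finitely generated, so the plan is to reduce to the finite-dimensional situation and then apply the free-subgroup dichotomy. First, because $G$ is finitely generated, Lemma~\ref{lem:6.7} gives a finite-dimensional $F$-subalgebra $B$ of $A$ with $G\le \mathrm{GL}_n(B)$. The same lemma tells us that either $G$ contains a non-cyclic free subgroup or $G$ is solvable-by-finite.

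Next we exclude the first alternative using Theorem~\ref{th:8.5}: a noetherian group contains no non-cyclic free subgroups. (The underlying reason is that a non-cyclic free group contains free subgroups of infinite rank, which are not finitely generated.) Therefore $G$ is solvable-by-finite.

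Finally, Corollary~\ref{cor:8.4} says that for a noetherian group, being solvable-by-finite is equivalent to being polycyclic-by-finite, so $G$ is polycyclic-by-finite. To make this explicit, pick a solvable normal subgroup $H$ of finite index in $G$. By Lemma~\ref{lem:8.1}, $H$ is noetherian, so Lemma~\ref{lem:8.2} shows that $H$ is polycyclic. Hence $G$ is polycyclic-by-finite. Alternatively, one could embed $G$ into $\mathrm{GL}_{nr}(F)$ via Lemma~\ref{lem:6.1} and quote Zassenhaus' theorem directly, but the route above stays within the paper's results.

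There is no serious obstacle here. The only point needing care is that Lemma~\ref{lem:6.7} requires $G$ to be finitely generated, and this holds precisely because $G$ is noetherian.
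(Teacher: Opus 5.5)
Your proposal is correct and follows essentially the same route as the paper. You rule out non-cyclic free subgroups by Theorem~\ref{th:8.5}, deduce that $G$ is solvable-by-finite from the free-subgroup dichotomy, and finish with Corollary~\ref{cor:8.4}; the only difference is that you invoke Lemma~\ref{lem:6.7} directly where the paper cites Theorem~\ref{th:6.8}, which is built on that lemma. Your explicit remark that $G$ is finitely generated because it is noetherian also supplies the step the paper leaves implicit when it passes from ``locally (solvable-by-finite)'' to ``solvable-by-finite.''
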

\begin{proof} Assume that $G$ is a noetherian subgroup of the general linear group $\mathrm{GL}_n(A).$ By Theorem~\ref{th:8.5}, a noetherian group cannot contain non-cyclic free subgroups. In view of Theorem \ref{th:6.8}, it follows that the group $G$ is solvable-by-finite. Hence, according to Corollary~\ref{cor:8.4}, $G$ is polycyclic-by-finite, as required.
\end{proof}
Since the matrix ring $\mathrm{M}_n(F)$ over a field $F$ is an algebra of dimension $n^2$ over $F.$ From Theorem~\ref{th:8.7}, we re-obtain a theorem of Zassenhaus~\cite[Theorem 1]{Zassenhaus_1969}.
\begin{theorem}\label{th:8.8}\textnormal{(Zassenhaus~\cite{Zassenhaus_1969})} Every noetherian linear group over a field is polycyclic-by-finite.
\end{theorem}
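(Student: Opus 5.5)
The plan is to derive Theorem~\ref{th:8.8} as a special case of Theorem~\ref{th:8.7}, taking the base algebra to be a field.

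Let $K$ be a field, $n$ a positive integer, and $G$ a noetherian subgroup of $\mathrm{GL}_n(K)$. We view $K$ as an algebra over itself. It is one-dimensional, so every finite subset of $K$ generates a subalgebra of dimension at most $1$ over $K$. Hence $K$ is trivially a locally finite $K$-algebra in the sense of Section~\ref{S6}. We can then apply Theorem~\ref{th:8.7} with $A=K$ and $F=K$, and it gives directly that $G$ is polycyclic-by-finite.

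The same conclusion can also be reached through the remark preceding the statement. Put $A=\mathrm{M}_n(K)$, an $n^2$-dimensional, hence locally finite, $K$-algebra. Then $\mathrm{GL}_n(K)=A^\times=\mathrm{GL}_1(A)$, so Theorem~\ref{th:8.7} with degree $1$ applies to $G$ as well.

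For completeness, I would recall the chain that Theorem~\ref{th:8.7} rests on in this case. First, $G$ has no non-cyclic free subgroups by Theorem~\ref{th:8.5}. Next, $G$ is locally (solvable-by-finite) by Theorem~\ref{th:6.8}, and since $G$ is noetherian it is finitely generated, so $G$ is solvable-by-finite. Finally, Corollary~\ref{cor:8.4} upgrades this to polycyclic-by-finite.

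I expect no real obstacle. The only point needing care is the verification that a field, or the matrix algebra over it, satisfies the locally finite hypothesis, and that is immediate from finite-dimensionality.
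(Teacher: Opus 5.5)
Your proposal is correct and follows the paper's approach: the paper obtains Theorem~\ref{th:8.8} as a direct specialization of Theorem~\ref{th:8.7}. It phrases this through the $n^2$-dimensional algebra $\mathrm{M}_n(F)$, which is your second route, and your primary choice $A=K$ works just as well.
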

The next theorem gives the affirmative answer to Bear's Conjecture for noetherian linear groups over $\mathrm{PI}$-algebras.
\begin{theorem}\label{th:8.9} Let $A$ be a $\mathrm{PI}$-algebra, $n$ a positive integer, and $G$ a subgroup of the general linear group $\mathrm{GL}_n(A).$ If $G$ is noetherian, then $G$ is polycyclic-by-finite. 
\end{theorem}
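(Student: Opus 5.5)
The plan is to combine the structure theory of Section~\ref{S7} with the results on noetherian groups from this section. Let $G\le\mathrm{GL}_n(A)$ be noetherian. By Lemma~\ref{lem:8.1}, every subgroup and every quotient of $G$ is noetherian, and in particular finitely generated. Put $\mathcal{L}=L(\mathrm{M}_n(A))$, the Levitzky radical of the PI-algebra $\mathrm{M}_n(A)$, and set $N=G\cap G(\mathcal{L})$. By Lemma~\ref{lem:7.1}(2), $N$ is a locally nilpotent normal subgroup of $G$. Since $G$ is finitely generated, the proof of Lemma~\ref{lem:7.1}(2) shows that $G/N$ itself embeds into a finite direct product $\prod_{i=1}^{r}\mathrm{GL}_m(F_i)$ of general linear groups over fields.

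First, $N$ is noetherian and hence finitely generated. A finitely generated locally nilpotent group is nilpotent, so $N$ is nilpotent, and by Lemma~\ref{lem:8.2} it is polycyclic.

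Next, I show that $G/N$ is polycyclic-by-finite. Let $\pi_i$ be the projections onto the factors $\mathrm{GL}_m(F_i)$. Each image $\pi_i(G/N)$ is a quotient of a noetherian group, so it is a noetherian linear group over a field. By Zassenhaus' theorem (Theorem~\ref{th:8.8}, or Theorem~\ref{th:8.7} applied to the finite-dimensional algebra $\mathrm{M}_m(F_i)$), each $\pi_i(G/N)$ is polycyclic-by-finite. A finite direct product of polycyclic-by-finite groups is polycyclic-by-finite, and so is any subgroup of such a group. Hence $G/N$, which embeds in $\prod_i \pi_i(G/N)$, is polycyclic-by-finite.

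Finally, $G$ is an extension of the polycyclic group $N$ by the polycyclic-by-finite group $G/N$. By \cite[Proposition 5.7]{Bo_SilberBAdd_2021}, this class is closed under extensions, so $G$ is polycyclic-by-finite. As an alternative route, one could note that $G/N$ is solvable-by-finite, so $G$ is radical-by-finite, and then conclude via Corollary~\ref{cor:8.4}.

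The main obstacle is the embedding step: we need all of $G/N$, not just its finitely generated subgroups, to embed into a finite product of $\mathrm{GL}_m(F_i)$. This is where finite generation of $G$, which follows from the noetherian hypothesis, is essential. Concretely, the entries of the images of the finitely many generators generate a noetherian commutative semiprime ring $K$ with finitely many minimal primes whose intersection is zero. The embedding $\mathrm{GL}_m(K)\hookrightarrow\prod_i\mathrm{GL}_m(\mathrm{Frac}(K/P_i))$ follows from that.

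Everything else uses only earlier results, together with the standard fact that finitely generated locally nilpotent groups are nilpotent.
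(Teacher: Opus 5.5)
Your proposal is correct and follows essentially the same route as the paper. Like the paper, you take $N=G\cap G(L)$ for the Levitzky radical $L$ of $\mathrm{M}_n(A)$, apply Zassenhaus' theorem to each projection of $G/N$ into $\mathrm{GL}_m(F_i)$, observe that $N$ is a finitely generated locally nilpotent, hence nilpotent and polycyclic, group, and close up under extensions. You are more careful than the paper in saying explicitly that finite generation of $G$ is what lets all of $G/N$ (not just its finitely generated subgroups) embed into $\prod_i \mathrm{GL}_m(F_i)$, and in using the finitely many \emph{minimal} primes of $K$. One small fix: $K$ should be generated by the entries of the generators \emph{and their inverses}. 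Otherwise $G/N$ need not lie in $\mathrm{GL}_m(K)$.
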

\begin{proof} Observe that if $A$ is a PI-algebra then so is the matrix algebra $\mathrm{M}_n(A)$. Assume that $G$ is a noetherian subgroup of the general linear group $\mathrm{GL}_n(A).$ According to Lemma~\ref{lem:7.1}, $G$ contains a locally nilpotent normal subgroup $N$ such that the quotient group $G/N$ can be embedded into a finite direct product 
	$$\prod_{i=1}^{r}\mathrm{GL}_m(F_i),$$ 
	where $F_i$ is a field for all $i$. For every $i\in\{1,2,\ldots, r\},$ let 
	$$\pi_i: \prod_{i=1}^{r}\mathrm{GL}_m(F_i)\longrightarrow \mathrm{GL}_m(F_i)$$ be the canonical projection of the product $\prod_{i=1}^{r}\mathrm{GL}_m(F_i)$ to the group $\mathrm{GL}_m(F_i)$. Since $G$ is noetherian, the quotient group $G/N$ is noetherian (see Lemma~\ref{lem:8.1}), and it follows that $\pi_i(G/N)$ is noetherian in the general linear group $\mathrm{GL}_m(F_i).$ Thus, according to Theorem~\ref{th:8.8}, $\pi_i(G/N)$ is polycyclic-by-finite, and so the product $P=\prod_{i=1}^{r}\pi_i(G/N)$ is polycyclic-by-finite. Being a subgroup of a polycyclic-by-finite group,  $G/N$ is also polycyclic-by-finite. On the other hand, since $G$ is noetherian, so is the subgroup $N$ (see Lemma~\ref{lem:8.1}). Hence, $N$ is nilpotent noetherian group, and it follows from Lemma~\ref{lem:8.2} that $N$ is polycyclic. Since $N$ is polycyclic and $G/N$ is polycyclic-by-finite, $G$ is polycyclic-by-finite, as required.
\end{proof}

As an application of Theorem~\ref{th:8.9}, we get the following result.
\begin{corollary}\label{cor:8.10} Let $G$ be a noetherian group, and let $$\{\sigma_{i}: G\longrightarrow \mathrm{M}_{n_i}(F_i)\mid F_i \text{ is a field }, i\in I\}$$ be a family of matrix representations satisfying the following conditions:
	\begin{itemize} 
		\item [(1)] $\mathrm{Ker}(\sigma_i)=1$ for all $i\in I$.
		\item [(2)] The dimensions of the representations $\sigma_i$ are bounded.
	\end{itemize}
	Then, $G$ is polycyclic-by-finite.
\end{corollary}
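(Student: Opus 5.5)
The plan is to reduce Corollary~\ref{cor:8.10} to Theorem~\ref{th:8.9}. We realize $G$ as a linear group over a $\mathrm{PI}$-algebra built from the given family of representations. Condition (2) lets us fix an integer $n$ with $n_i\le n$ for all $i\in I$. We pad each $\sigma_i$ to a representation of degree exactly $n$ by setting $\tau_i(g)=\sigma_i(g)\oplus 1_{n-n_i}$. We may first replace $\sigma_i$ by its restriction to $G$ as a group homomorphism into $\mathrm{GL}_{n_i}(F_i)$; this is the intended reading, since the images of group elements are invertible because $\sigma_i(g)\sigma_i(g^{-1})=\sigma_i(1)=1$. Each $\tau_i$ is then still an injective homomorphism $G\to \mathrm{GL}_n(F_i)$.

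Next we form the commutative ring $A=\prod_{i\in I}F_i$ and identify $\mathrm{M}_n(A)$ with $\prod_{i\in I}\mathrm{M}_n(F_i)$, so that $\mathrm{GL}_n(A)=\prod_{i\in I}\mathrm{GL}_n(F_i)$. We define $\tau:G\to \mathrm{GL}_n(A)$ by $\tau(g)=(\tau_i(g))_{i\in I}$. This is a homomorphism, and it is injective since already a single $\tau_i$ is injective by condition (1); in fact injectivity would follow from $\bigcap_i\mathrm{Ker}(\sigma_i)=1$ alone. Thus $G$ is isomorphic to a subgroup of $\mathrm{GL}_n(A)$.

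The one delicate step, and the main obstacle, is making $A$ an algebra over a single field, as Theorem~\ref{th:8.9} requires. The $F_i$ may have different characteristics, and $A$ is then not an algebra over any field. I would first try to split $I$ by characteristic, $I=\bigsqcup_p I_p$, and use only one nonempty class $I_p$. This costs nothing, since one injective representation suffices. Then $A_p=\prod_{i\in I_p}F_i$ is a commutative algebra over the prime field $\mathbb{F}_p$ or $\mathbb{Q}$, hence a $\mathrm{PI}$-algebra satisfying $xy-yx=0$. Most simply, one may take $I$ to be a single index, $A=F_i$, and invoke Theorem~\ref{th:8.8} directly. The reduction through $\prod F_i$ only matters if one wants the weaker hypothesis $\bigcap_i\mathrm{Ker}\sigma_i=1$; in that case, after splitting by characteristic, we embed $G$ into the finite product over the characteristics occurring and argue as in the proof of Theorem~\ref{th:8.9}.

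Finally, since $G$ is noetherian, Theorem~\ref{th:8.9} applied to $\tau(G)\le \mathrm{GL}_n(A_p)$ shows that $G$ is polycyclic-by-finite.
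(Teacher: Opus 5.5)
Your proof is correct, but it takes a different route from the paper. The paper uses both hypotheses together. It embeds $G$ into the unit group of $A=\prod_{i\in I}\mathrm{M}_{n_i}(F_i)$, notes that $A$ is a $\mathrm{PI}$-algebra because the degrees are bounded (every factor satisfies the standard identity $s_{2n}$), and then applies Theorem~\ref{th:8.9}.

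You instead observe that hypothesis (1) already makes each individual $\sigma_i$ faithful. So for any single index $i_0$ one has $G\cong\sigma_{i_0}(G)\le\mathrm{GL}_{n_{i_0}}(F_{i_0})$, and Zassenhaus' Theorem~\ref{th:8.8} finishes the proof. What your route buys:
\begin{itemize}
\item It shows that hypothesis (2) is redundant.
\item It avoids a point the paper's argument passes over. If the $F_i$ have different characteristics, $\prod_i\mathrm{M}_{n_i}(F_i)$ is not an algebra over any field, so Theorem~\ref{th:8.9}, stated for $\mathrm{PI}$-algebras over a field, does not literally apply. Your restriction to one characteristic class $I_p$, or to one index, repairs this.
\end{itemize}
Both arguments tacitly need $I\neq\emptyset$. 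For the empty family the hypotheses hold vacuously, and the statement would assert Baer's Conjecture outright, which is false.

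One caveat concerns only your side remark on the weaker hypothesis $\bigcap_i\mathrm{Ker}(\sigma_i)=1$. Infinitely many characteristics may occur, so the \emph{finite} product over the characteristics occurring need not exist, and that extension would require a different argument. It plays no role in the corollary as stated.
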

\begin{proof} By (2), there exists a positive integer $n$ such that $n_i\leq n$ for all $i\in I.$ Now, by (1), $G$ can be embedded into $A=\prod_{i\in I}\mathrm{M}_{n_i}(F_i).$ Again by (2), we get that $A$ is a $\mathrm{PI}$-algebra, and so in view of Theorem~\ref{th:8.9}, $G$ is polycyclic-by-finite as required. 
\end{proof}
\bigskip

{\noindent\textbf{Funding} }This research is funded by Vietnam National University of Ho Chi Minh City (VNU-HCM) under grant number C2026-16-05

\bigskip 

{\noindent\textbf{Conflict of Interest.} }The authors have no conflict of interest to declare that are relevant to this article.

\bigskip

{\noindent\textbf{Data availability}}
\bigskip

No data was used for the research described in the article.


\begin{thebibliography}{}


\bibitem{Pa_Baer_1956} R. Baer, Noethersche Gruppen I. \textit{Math. Z}, \textbf{66} (1956), 269-288. II. \textit{Math. Ann}, \textbf{165} (1966), 163-180.


\bibitem{Pa_BelovRowen_2014} A. K. Belov and L. H. Rowen, The Braun-Kemer-Razmyslov theorem for affine $\mathrm{PI}$-algebras, arXiv:1405.0730vl[math.RA] 4 May 2014.

\bibitem{Pa_BelovRowenVishne_2006} A. K. Belov, L. H. Rowen, and U. Vishne, Normal bases of $\mathrm{PI}$-algebras. \textit{Adv. Appl. Math.}, \textbf{37} (2006), 378-389.


\bibitem{Pa_Baumslag_1963} G. Baumslag, Automorphism groups of residually finite groups, \textit{J. London Math. Soc.} \textbf{38} (1963), 117–118.

\bibitem{Pa_Burn_1902} W. Burnside, On an unsettled question in the theory of discontinuous groups, \textit{Q. J. Pure Appl. Math.} \textbf{33} (1902), 230--238.	
	
	
\bibitem{Pa_ChuaHai_2026} L. V. Chua, and B. X. Hai, Locally generalized radical skew linear groups, \textit{J. Algebra} \textbf{697} (2026) 854--873, https://doi.org/10.1016/j.jalgebra.2026.03.006.
	
\bibitem{Pa_ChNaHa_24} L. V. Chua, C. M. Nam, and B. X. Hai, Skew linear groups with rank conditions, \textit{J. Algebra}, \textbf{653} (2024), 200-219.

\bibitem{Pa_ChNaHa_25} L. V. Chua, C. M. Nam, and B. X. Hai, On the solvable radical of groups, \textit{J. Algebra}, \textbf{682} (2025), 787--803, https://doi.org/10.1016/j.jalgebra.2025.06.018

	
	
\bibitem{Pa_CoeMil_1998} S. P. Coelho and C. P. Milies, Torsion subgroups of units in Artinian rings, \textit{Lecture notes in pure and applied mathematics} \textbf{198} (1998), 75--81.
	
\bibitem{Bo_Cohn_1995} P. M. Cohn, \textit{Skew fields}, Theory of general division ring, Cambridge University, 1995.	

\bibitem{Pa_Danh-Deo_2023} L. Q. Danh, T. T. Deo, A note on subnormal subgroups in division rings containing solvable subgroups, \textit{Bull. Austr. Math. Soc.}, \textbf{108}:3 (2023), 422--427. https://doi.org/10.1017/S0004972722001599

\bibitem{Pa_Danh-Khanh_2021} L. Q. Danh, H. V. Khanh, Locally solvable subnormal and quasinormal subgroups in division rings, \textit{Hiroshima Math. J.}, \textbf{51} (2021), 267--274, doi:10.32917/h2020034
		
\bibitem{Pa_dbh_2019} T. T. Deo, M. H. Bien, B. X. Hai, On weakly locally finite division rings, \textit{Acta Mathematica Vietnamica} (2019) \textbf{44}, 553--569.
	
	
\bibitem{Bo_Dixon_2017} M. R. Dixon, L. A. Kurdachenko and I. Ya. Subbotin, \textit{Ranks of groups, The tools, characteristics, and restrictions}, Wiley, 2017.
	
\bibitem{Bo_Draxl_1983} P. K. Draxl, \textit{Skew fields}, Cambridge University Press, 1983.
	
\bibitem{Pa_Golod_1964} E. S. Golod, On nil-algebras and finitely approximable p-groups, \textit{Izv. Akad. Nauk SSSR, Ser. Mat.} \textbf{28} (1964), 273--276.
	
\bibitem{Pa_Golod-Shaf_1964} E. S. Golod and I. R. Shafarevich, On the class field tower, \textit{Izv. Akad. Nauk SSSR Ser. Mat.} \textbf{28} (1964), 261--272.
	
\bibitem{Pa_Eick_2003} B. Eick, When is the automorphism group of virtually polycyclic group virtually polycyclic?  \textit{Glasgow. Math. J.} \textbf{45} (2003), 527--533.

\bibitem{Bo_Jacobson_1964} N. Jacobson, \textit{Structure of Rings}, Amer. Math. Soc., Providence (1964).

\bibitem{Pa_Khanh-Hai_2022} H. V. Khanh, B. X. Hai, Locally solvable and solvable-by-finite maximal subgroups of $\mathrm{GL}_n(D)$, \textit{Publ. Mat.} \textbf{66} (2022), 77--97, DOI: 10.5565/PUBLMAT6612203

\bibitem{Pa_HaiChua_2025} B. X. Hai and L. V. Chua, Skew linear groups with rank conditions, II \textit{J. Algebra}, \textbf{677} (2025), 430-447., https://doi.org/10.1016/j.jalgebra.2025.03.054
	
\bibitem{Pa_Herstein_1978} I. N. Herstein, Multiplicative commutators in division rings \textit{Israel J. Math.} (2) \textbf{31} (1978), 180--188.
	
\bibitem{Pa_He_80} I. N. Herstein, Multiplicative commutators in division ring II, \textit{Rendiconti Del Circolo Matematico di Palermo Serie II, Tomo XXIX} (1980), 485-489.
	
\bibitem{Pa_HersteinScott_1963} I. N. Herstein and W. R. Scott, Subnormal subgroups of division rings \textit{Can. J. Math.} \textbf{15} (1963), 80--83.
	
\bibitem{Pa_Hua_1950} L. K. Hua, On the multiplicative group of a field, \textit{Acad. Sinic. Sci. Record} \textbf{3} (1950) 1--6.
	
\bibitem{Bo_Lam_2001} T.Y. Lam, \textit{A First Course in Noncommutative Rings}, 2nd ed., GTM \textbf{131}, Springer-Verlag, New York, 2001.
	
\bibitem{Bo_Lennox_2004} J. C. Lennox and D. J. S. Robinson, \textit{The theory of infinite solvable groups}, Oxford, 2004.
	
\bibitem{Pa_Li_77} A. I. Lichtman, On subgroups of the multiplicative group of skew fields, \textit{Proc. Amer. Math. Soc.} \textbf{63}:1 (1977), 15--16.
	
\bibitem{Pa_Li_78} A. I. Lichtman, Free subgroups of normal subgroups of the multiplicative group of skew fields, \textit{Proc. Amer. Math. Soc.} \textbf{71}:2 (1978), 174--178.		
	
\bibitem{Bo_LyPa_2001} R. C. Lyndon and P. E. Schupp, \textit{Combinatorial group theory}, Springer, 2001.

\bibitem{Pa_mal'cev_1940} A. I. Mal'cev, On isomorphic matrix representations of infinite groups of matrices (Russian), \textit{Mat. Sb.} \textbf{8} (1940), 405–422, \textit{Amer. Math. Soc. Transl.} \textbf{45} (1965), 1–18
	
\bibitem{Pa_Milies_1981} C. P. Milies, Group rings whose torsion units form a subgroup II, \textit{Commun. Algebra,} \textbf{81}, 9 (1981), 699--712.
	
\bibitem{Pa_nbh_17} N. K. Ngoc, M. H. Bien,  B. X. Hai, Free subgroups in almost subnormal subgroups of general skew linear groups, \textit{Algebra i Analiz} \textbf{28}(5) (2016) 220-235, translation in \textit{St. Peters. Math. J.} \textbf{28}(5) (2017), 707--717.
	
\bibitem{Pa_Procesi_1966} C. Procesi, The Burnside problem, \textit{J. Algebra,} \textbf{4}, (1966), 421--425.

\bibitem{Bo_Ol'shanskii_1991} A. Yu Ol’shanskii, \textit{Geometry of Defining Relations in Groups}, Mathematics and its Applications, Vol. 70. Kluwer Academic Publishers, Dordrecht (1991).
	
\bibitem{Bo_SilberCoor_2010} T. C. Silberstein and M. Coornaert, \textit{Cellular Automata and Groups}, Springer-Verlag Berlin Heidelberg, 2010.
	
\bibitem{Bo_SilberBAdd_2021} T. C. Silberstein and M. D'Adderio, \textit{Topics in Groups and Geometry}, Springer Nature Switzerland AG, 2021.
	
\bibitem{Pa_Stuth_1964} C. J. Stuth, A generalization of the Cartan-Brauer-Hua Theorem, \textit{Proc. AMS}, \textbf{15}:2 (1964) 211--217. 
	
\bibitem{Pa_Schur_1911} I. Schur, Uber Gruppen periodischer Substitutionen, \textit{S. B. Preuss Akod. Wiss.} (1911), 619--627.

\bibitem{Pa_Scott_1957} W. R. Scott, On the multiplicative group of a division ring, \textit{Proc. AMS}, \textbf{8}, (1957), 303--305. 

\bibitem{Pa_ShWe_86} M. Shirvani and B. A. F. Wehrfritz, \textit{Skew Linear Groups}, Cambridge University Press, 1986.

\bibitem{Pa_Tits_1972} J. Tits, Free subgroups in linear groups, \textit{J. Algebra}, \textbf{20} (1972), 250--270.
	
\bibitem{Pa_Tokarenko_1968} A. Tokarenko, About linear groups over rings, \textit{Sib. Math. Zh,} \textbf{9}, (1968), 951--959.

\bibitem{Pa_Zalesskii_1965} A.E. Zalesskii, Solvable groups and crossed products, \textit{Mat. Sb.} (N.S.) \textbf{67} (109) (1) (1965) 154--160.
	
\bibitem{Zassenhaus_1938} H. Zassenhaus, Beweis eines satzes uber diskrete gruppen, \textit{Abh. Math. Sem. Univ. Hamburg.} \textbf{12} (1938), 289-312.

\bibitem{Zassenhaus_1969} H. Zassenhaus, On linear noetherian groups, \textit{J. Number Theory} \textbf{1} (1969), 70--89.
	
\end{thebibliography}
\end{document}